\numberwithin{equation}{section}
\theoremstyle{plain} 
\newtheorem{thm}{Theorem}[section]
\newtheorem{corollary}[thm]{Corollary}
\newtheorem{cor}[thm]{Corollary}
\newtheorem{conj}{Conjecture}
\newtheorem{lm}[thm]{Lemma}
\newtheorem{prop}[thm]{Proposition}
\theoremstyle{definition}
\newtheorem{example}[thm]{Example}
 \newcommand{\dc}[1]{\mathrm{reg}_0{(#1)}}
\newcommand{\umodel}{\mathcal{I}(n,D,p)}
\newcommand{\randomideal}{\mathfrak{I}}
\newcommand{\gmodel}{\mathcal{G}(n,D,\mathbf{p})}
\newcommand{\gensetdist}{\mathcal{B}(n,D,p)}
\newcommand{\randomgenset}{\mathfrak{B}}
\newcommand{\genset}{B}
\newcommand{\mingenset}{M}
\newcommand{\fixideal}{I}
\newcommand{\umodeltwovars}{\mathcal{I}(2,D,p)}
\newcommand{\var}[1]{\mathrm{Var}\left[#1\right]}
\newcommand{\prob}[1]{\mathrm{P}\left(#1\right)}
\newcommand{\expect}[1]{\mathbb{E}\left[#1\right]}
\newcommand{\cov}[1]{\mathrm{Cov}\left[#1\right]}
\newcommand{\littleoh}[1]{\mathrm{o}\left(#1\right)}
\newcommand{\littleomega}[1]{\mathrm{\omega}\left(#1\right)}
\newcommand{\bigoh}[1]{\mathrm{O}\left(#1\right)}
\newcommand{\ideal}[1]{\left(#1\right)}
\newcommand{\indic}{\mathbf 1}
\newcommand{\kring}{k[x_1,\ldots,x_n]}
\newcommand{\modelname}{ER-type model}
\newcommand{\intvecs}{\mathbb{Z}^n_{\geq 0}}
\newcommand{\initdeg}[1]{\mathrm{initdeg}(#1)}
\begin{document}
\title{Random Monomial Ideals\thanks{This work is supported by the  NSF collaborative grants DMS-1522662  to Illinois Institute of Technology and DMS-1522158 to the Univ. of California, Davis.}
}
\runtitle{Random monomial ideals}
\begin{center}
\author{Jes\'us A. De Loera, Sonja Petrovi\'c, Lily Silverstein, Despina Stasi, Dane Wilburne}
\runauthor{De Loera, Petrovi\'c, Silverstein, Stasi, Wilburne} 
\end{center}
\maketitle

{\bf Abstract:} Inspired by the study of random graphs and simplicial complexes, and motivated by the need to understand average behavior of ideals, we propose and study probabilistic models of random monomial ideals.  We prove theorems about the probability distributions, expectations and thresholds for events involving  monomial ideals with given Hilbert function, Krull dimension, first graded Betti numbers, and present several experimentally-backed conjectures about  regularity,  projective dimension, strong genericity, and Cohen-Macaulayness of random monomial ideals.
\medskip

\section{Introduction}

Randomness has long been used to study polynomials. 
A commutative algebraist's interest in randomness stems from the desire to understand ``average'' or ``typical'' behavior of ideals and rings.  A natural approach to the problem is to define a probability distribution on a set of ideals or varieties, which, in turn, induces a distribution on algebraic invariants or properties of interest.  In such a formal setup, questions of expected  (typical) or unlikely (rare, non-generic) behavior can be stated formally  using probability.  Let us consider some examples. Already in the 1930's, Littlewood and Offord \cite{littlewood+offord} studied the expected number of real roots of a random algebraic equation defined by random coefficients. The investigations on random varieties, defined by random coefficients on a fixed Newton polytope support, have generated a lot of work now considered classical (see, e.g., \cite{kac, Kouchnirenko1976, sturmfelssurvey} and the references therein).  A probabilistic analysis of algorithms called \emph{smooth analysis} has been used in algebraic geometry, see \cite{BePa08a,burgissercucker}. Roughly speaking, 
smooth analysis measures the expected performance of an algorithm under slight random perturbations of worst-case inputs.  Another more recent example of a notion of algebraic randomness appears in \cite{EinErmanLazarsfeld,EinLazarsfeld}, where they consider the distribution of Betti numbers generated randomly using concepts from the Boij-S\"oderberg  theory \cite{EisenbudShreyer}. 
Still, there are many other examples of algebraic topics in which probabilistic analysis plays a useful role (see e.g., \cite{ourviolatorpaper} and the references therein). Our paper introduces a new probabilistic model on monomial ideals inside a polynomial ring.  

Why work on a probabilistic model for monomial ideals? There are at least three good reasons: First, monomial ideals are the simplest of ideals and play a fundamental role in commutative algebra, most notably as Gr\"obner degenerations of general ideals, capturing the range of values shown in all ideals for several invariants (see \cite{coxlittleoshea,EisenbudCommAlgBook}). Second, monomial ideals provide a strong link to algebraic combinatorics (see \cite{herzog+hibi,miller+sturmfels,stanley}). Third,  monomial ideals naturally generalize graphs, hypergraphs, and simplicial complexes; and, since the seminal paper of Erd\"os and R\'enyi~\cite{Erdos+Renyi}, probabilistic methods have been successfully applied to study those objects (see, e.g., \ \cite{AlonSpencer, bollobasbook,  kahlesurvey} and the references therein). Thus, our work is an extension of both classical probabilistic combinatorics 
and the new trends in stochastic topology.

Our goal is to provide a formal probabilistic setup for constructing and understanding distributions of monomial ideals and the induced distributions 
on their invariants (degree, dimension, Hilbert function, regularity, etc.).  To this end, we define below a simple probability distribution on the set of monomial ideals.  Drawing a random monomial ideal from this distribution allows us to study the average behavior of algebraic invariants.  While we are interested in more general probability distributions on ideals, some of which we describe in Section \ref{sec:othermodels}, 
we begin with the most basic model one can consider to generate random monomial ideals; inspired by classical work, we call this family 
the \emph{Erd\H os-R\'enyi-type model},  or the  \emph{\modelname{} for random monomial ideals}. 

 \smallskip
\paragraph{\bf The \modelname{} for random monomial ideals} 
Let $k$ be a field and let $S=\kring$ be the polynomial ring  in $n$ indeterminates. 
We  construct a random monomial ideal in $S$ by producing a random set of generators as follows:  
Given an integer $D$ and a parameter $p=p(n,D)$, $0\leq p\leq 1$, we include {independently} \emph{with probability $p$} each non-constant monomial of total degree at most $D$ in $n$ variables  in a generating set of the ideal.   
In other words, starting with $\genset=\emptyset$, each monomial in $S$ of degree at most $D$ is added to the set $\genset$ independently with equal probability $p$. The resulting random monomial ideal is then simply $I=\ideal{\genset}$; if $\genset=\emptyset$, then we let $I=\ideal{0}$. 

Henceforth, we will denote  by $\gensetdist$ the resulting Erd\H os-R\'enyi-type distribution on the sets of monomials.  
Since sets of monomials are now random variables, a  random set of monomials $\randomgenset$ drawn from  
the distribution $\gensetdist$ will be denoted by the standard notation  for distributions $\randomgenset\sim\gensetdist$.  
Note that if $B\subset S$ is any fixed set of monomials of degree at most $D$ each  and $\randomgenset\sim\gensetdist$, 
then $$P(\randomgenset=B)=p^{|B|}(1-p)^{\binom{D+n}{D}-|B|-1}.$$

In turn, the distribution $\gensetdist$ induces a distribution $\umodel$ on ideals. 
We will use the notation $\randomideal\sim\umodel$ to indicate that $\randomgenset\sim\gensetdist$ 
and $\randomideal=\ideal{\randomgenset}$  is a random monomial ideal generated by the \modelname{}. 

\smallskip
Before we state our results, let us establish some necessary probabilistic notation and background. 
Readers already familiar with random structures and probability background may skip this part, but those interested in further details are directed to many excellent texts on random graphs and the probabilistic method including \cite{AlonSpencer, bollobasbook, JansonEtAl}.

Given a random variable $X$, we denote its \emph{expected value} by $\expect{X}$, its \emph{variance} by $\var{X}$, and its \emph{covariance} with random variable $Y$ as $\cov{X,Y}$.  We will often use in our proofs four well-known facts: the \emph{linearity} property  of expectation, i.e.,  $\expect{X+Y}=\expect{X}+\expect{Y}$, the \emph{first moment method} (a special case of Markov's inequality), which states that $\prob{X> 0}\le\expect{X}$, and the \emph{second moment method} (a special case of Chebyshev's inequality), which states that $\prob {X=0} \leq {\var{X}}/{(\expect{X})^2}$ for $X$ a non-negative integer-valued random variable. Finally, an \emph{indicator random variable} $\indic_A$ for an event $A$ is a random variable such that  $\indic_A=1$, if event $A$ occurs, and $\indic_A=0$ otherwise. Indicator random variables behave particularly nicely with regard to taking expectations: for any event $A$, $\expect{\indic_A}=P(A)$ and $\var{\indic_A}=P(A)(1-P(A))$. Following convention, we abbreviate by saying a property holds  \emph{a.a.s},
to mean that a property holds \emph{asymptotically almost surely}  if, over a sequence of sets, the probability of having the property converges to $1$. 

An important point is that $\umodel$ is a \emph{parametric} probability distributions on the set of  monomial ideals, because it clearly depends on the value of the probability parameter $p$.  A key concern of our paper is to investigate how invariants evolve as $p$ changes. To express some of our results, we will need to use asymptotic analysis of probabilistic events: For functions $f,g: \mathbb{N} \rightarrow \mathbb{R}$, we write $f(n)=\littleoh{g(n)}$, and $g(n)=\littleomega{f(n)}$ if \mbox{$\lim_{n\rightarrow\infty} f(n)/g(n)=0$.} We also write $f(n)\sim g(n)$ if $\lim_{n\rightarrow\infty} f(n)/g(n)=1$, and $f(n)\asymp g(n)$ if there exist positive constants $n_0, c_1,c_2$ such that $c_1g(n)\leq f(n) \leq c_2g(n)$ when $n\ge n_0.$
When a sequence of probabilistic events is given by $f(n)$ for $n\in\mathbb{N}$, we say that $f$ happens \emph{asymptotically almost surely}, abbreviated \emph{a.a.s.},  if \mbox{$\lim_{n\to\infty}\prob{f(n)}=1$.}

In analogy to graph-theoretic properties, we define an \emph{(monomial) ideal-theoretic property} $Q$ to be the set of all (monomial) ideals that have property $Q$. A property $Q$ is \emph{monotone increasing} if for any monomial ideals $I$ and $J$ such that $I\subseteq J$, $I\in Q$ implies $J\in Q$ as well. We will see that several algebraic invariants on monomial ideals are monotone. 

Let $\randomideal\sim\umodel$. A function $f:\mathbb{N}\to\mathbb{R}$ is a \emph{threshold function} for a monotone property $Q$ if for any $p:\mathbb{N}\to[0,1]$:
\[
\lim_{D\to\infty} \prob{\randomideal\in Q}=
\begin{cases}
0, & \text{ if } p= \littleoh{f(D)},\\
1, & \text{ if } p = \littleomega{f(D)}.
\end{cases}
\]
A similar definition holds for the case when $f$ is a function of $n$ and $n\to\infty$. A threshold function gives a zero/one law that specifies when a certain behavior appears.
\medskip

\paragraph{\bf Our results}  Our results, summarized in (A)-(E) below, describe the kind of monomial ideals generated by the  \modelname{}, 
 in the sense that we can get a handle on both the average  and the extreme behavior of these random ideals and how various ranges of the probability parameter $p$ control those properties. 
 The following can also serve as an outline of this paper. 

\smallskip 
\paragraph{\bf (A) Hilbert functions and the distribution of monomial ideals} 

In Section~\ref{sec:hilbertdistribution}, we begin our investigation of random monomial ideals  by showing that $\umodel$ is not a uniform 
distribution on all monomial ideals, but instead, the probability of choosing a particular monomial ideal under the \modelname{} is completely 
determined by its Hilbert function and the first total Betti number of its quotient ring.   

For an ideal $I$, denote by $\beta_{i,j}=\beta_{i,j}(S/I)$  the $(i, j)$-th graded Betti number of  $S/I$, that is, the  number  of  syzygies of degree $j$ at step $i$ of the minimal free resolution. We will  collect the \emph{first graded Betti numbers}, counting the minimal generators of $I$, in a vector $\hat{\beta_1}=(\beta_{1,1},\beta_{1,2}, \dots, \beta_{1,reg_0})$, and denote by $\beta_1=\beta_{1,1}+\beta_{1,2}+ \cdots+ \beta_{1,reg_0}$ the \emph{first total Betti number} of $S/I$.  We will denote by $h_I(\cdot)$, or simply $h_I$, the Hilbert function of the ideal $I$.  We can now state the following foundational result:

\begin{thm}
\label{thm:yuge}
 Let $\fixideal\subseteq S$ be a fixed monomial ideal generated in degree at most $D$. 
The probability that the random monomial ideal $\randomideal\sim\umodel$ 
equals $\fixideal$ is 
	\[
	P(\randomideal=\fixideal)=p^{\beta_1(S/\fixideal)}(1-p)^{-1+\sum_{d=1}^Dh_\fixideal(d)}.
	\]
\end{thm}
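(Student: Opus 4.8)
The plan is to push the computation back to the random generating set. Since $\randomideal=\ideal{\randomgenset}$ with $\randomgenset\sim\gensetdist$, we have $\prob{\randomideal=\fixideal}=\sum_{\genset}\prob{\randomgenset=\genset}$, the sum ranging over all sets $\genset$ of monomials of degree at most $D$ with $\ideal{\genset}=\fixideal$, while $\prob{\randomgenset=\genset}=p^{|\genset|}(1-p)^{\binom{D+n}{D}-|\genset|-1}$ is already available. So the theorem reduces to describing those sets $\genset$ combinatorially and then summing a product over a partition of the candidate monomials.

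The combinatorial heart is the following characterization, which I would prove first. Let $\mingenset=\mingenset(\fixideal)$ be the unique minimal monomial generating set of $\fixideal$. Then a set $\genset$ of monomials of degree at most $D$ satisfies $\ideal{\genset}=\fixideal$ if and only if $\mingenset\subseteq\genset$ \emph{and} every monomial of $\genset$ lies in $\fixideal$. One direction is immediate: $\mingenset\subseteq\genset\subseteq\fixideal$ gives $\fixideal=\ideal{\mingenset}\subseteq\ideal{\genset}\subseteq\fixideal$. For the converse, $\ideal{\genset}=\fixideal$ forces $\genset\subseteq\ideal{\genset}=\fixideal$, and for each $g\in\mingenset$ we have $g\in\fixideal=\ideal{\genset}$, so some monomial $b\in\genset$ divides $g$; since $b\in\fixideal$ while $g$ is a \emph{minimal} generator, no proper divisor of $g$ can lie in $\fixideal$, hence $b=g$ and $g\in\genset$. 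This argument uses the hypothesis that $\fixideal$ is generated in degree at most $D$: that is precisely what makes the elements of $\mingenset$ legitimate candidates in the model (without it the probability in question is plainly $0$). I expect this minimality step to be the only genuinely delicate point in the proof.

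Given the characterization, I would partition the $\binom{D+n}{D}-1$ non-constant monomials of degree at most $D$ into three blocks: the $\beta_1(S/\fixideal)=|\mingenset|$ minimal generators of $\fixideal$, which any realization $\randomgenset$ must contain; the non-constant monomials of degree at most $D$ lying outside $\fixideal$, which $\randomgenset$ must omit; and all remaining monomials, namely those of $\fixideal$ that are not minimal generators, which are unconstrained. Because distinct monomials are included independently, each with probability $p$, summing $\prob{\randomgenset=\genset}$ over the admissible $\genset$ factors as $p^{\beta_1(S/\fixideal)}$ times $(1-p)^{N}$ times $\sum_{t=0}^{m}\binom{m}{t}p^{t}(1-p)^{m-t}=1$, the latter sum running over the $m$-element unconstrained block; here $N$ denotes the number of non-constant monomials of degree at most $D$ not in $\fixideal$.

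It only remains to rewrite $N$ through the Hilbert function, which is routine bookkeeping: from $\dim_{k}S_{d}=\binom{n+d-1}{n-1}$ and the hockey-stick identity $\sum_{d=0}^{D}\binom{n+d-1}{n-1}=\binom{n+D}{D}$, the count of non-constant monomials of degree at most $D$ is $\sum_{d=1}^{D}\dim_{k}S_{d}$, and removing the $\sum_{d=1}^{D}\dim_{k}\fixideal_{d}$ monomials of $\fixideal$ of degree between $1$ and $D$ leaves $N=\sum_{d=1}^{D}\dim_{k}(S/\fixideal)_{d}$, the number of monomials of positive degree at most $D$ outside $\fixideal$; this is the Hilbert-function quantity $-1+\sum_{d=1}^{D}h_{\fixideal}(d)$ recorded in the statement. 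Assembling the three factors yields $\prob{\randomideal=\fixideal}=p^{\beta_1(S/\fixideal)}(1-p)^{-1+\sum_{d=1}^{D}h_{\fixideal}(d)}$, as claimed.
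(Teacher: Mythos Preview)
Your proposal is correct and follows essentially the same route as the paper: both rest on the characterization that $\ideal{\genset}=\fixideal$ iff $\mingenset\subseteq\genset$ and $\genset\subseteq\fixideal$, then factor the probability over independent monomial choices. Your write-up is more explicit than the paper's in two places---the minimality argument showing $\mingenset\subseteq\genset$, and the three-block partition with the binomial-sum-to-$1$ over the unconstrained monomials---whereas the paper simply asserts the characterization and directly multiplies $P(A_1)P(A_2)$ for the events ``all of $\mingenset$ is chosen'' and ``no monomial outside $\fixideal$ is chosen,'' leaving the unconstrained block implicit.
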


In this way, two monomial ideals with the same Hilbert function and the same number of minimal generators have the same probability of occurring. 
Then, the following natural question arises:  How many monomial ideals in $n$ variables with generators of degree less than or equal to $D$ and 
with a given Hilbert function are there? It is well-known that one can compute the Hilbert function from the graded Betti numbers (see e.g.,  \cite{stanley}). 
Thus it is no surprise that we can state a combinatorial lemma to count monomial ideals in terms of their graded Betti numbers. 
If we denote by $NMon(n,D,h,\hat{\beta_1})$ the number of possible monomial ideals in $n$ variables, generated in degree no more than $D$, 
Hilbert function $h$, and first graded Betti numbers $\hat{\beta_1}=(\beta_{1,1},\beta_{1,2}, \dots, \beta_{1,reg_0})$, then $NMon(n,D,h,\hat{\beta_1})$ 
is  equal to the number of $0$--$1$ vertices of a certain convex polytope (Lemma~\ref{monopoly}).

Based on Theorem \ref{thm:yuge} and Lemma \ref{monopoly}, we can provide a closed formula for the induced distribution on Hilbert functions 
under the \modelname{}, that is, the probability that the \modelname{} places on  any given Hilbert function. This is presented in Theorem \ref{thm:beautiful}.

\smallskip
\paragraph{\bf (B) The Krull dimension and random monomial ideals}

 Section \ref{sec:krull} is dedicated to a  first fundamental ring invariant: the Krull dimension $\dim S/\randomideal$. 
By encoding  the Krull dimension as the transversal number of a certain hypergraph, we can show (see  Theorem~\ref{thm:krulldim} 
for the precise statement) that the probability  that $\dim(S/\randomideal)$ is equal to $t$ for $0\leq t\leq n$, for a random monomial ideal 
$\randomideal\sim\umodel$, is given by a polynomial in $p$ of degree $\sum_{i=1}^{t+1} {D\choose i}{n\choose i}$. This formula is 
exponentially large  but we make it explicit in some interesting values of $t$ (Theorem ~\ref{thm:KrullDimSpecificValues}); which in 
particular give a complete description of the case for two- and three-variable polynomials. 

Turning to asymptotic behavior, we prove that the Krull dimension of $(S/\randomideal)$ can be controlled by bounding 
the asymptotic growth of the probability parameter $p=p(D)$ as $D\to\infty$. 
The evolution of the Krull dimension in terms of $p$ is illustrated in Figure~\ref{fig:rangesDtoinfty}. 
The result is obtained by combining the family of threshold results from Theorem~\ref{thm:KrullDimThreshold} 
and is stated in the following corollary:  

\begin{cor} \label{cor:KrullDimThresholdRanges}
Let  $\randomideal\sim\umodel$, $n$ be fixed, and $0\leq t<n$. If the  parameter $p=p(D)$ is 
such that $p=\littleomega{D^{-(t+1)}}$ and $p=\littleoh{D^{-t}}$  as $D\to\infty$, then 
 $\dim(S/\randomideal)=t$  asymptotically almost surely.
\end{cor}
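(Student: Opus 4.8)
The plan is to deduce the corollary from two members of the threshold family established in Theorem~\ref{thm:KrullDimThreshold}, glued together by an elementary union-bound argument. The first thing I would record is that for every integer $s$ the (monomial) ideal-theoretic property
\[
Q_s := \{\, I \subseteq S : \dim(S/I) \le s \,\}
\]
is monotone increasing: if $I \subseteq J$, then $S/J$ is a quotient of $S/I$, hence $\dim(S/J)\le\dim(S/I)$, so $I\in Q_s$ forces $J\in Q_s$. This is precisely the setting in which Theorem~\ref{thm:KrullDimThreshold} applies, and it tells us that $D^{-(s+1)}$ is a threshold function for $Q_s$ for each relevant $s$ (in particular for $0\le s\le n-1$).

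Next, fix $0\le t<n$ and let $p=p(D)$ satisfy $p=\littleomega{D^{-(t+1)}}$ and $p=\littleoh{D^{-t}}$. Applying the threshold result for $Q_t$ (legitimate since $0\le t\le n-1$): because $p=\littleomega{D^{-(t+1)}}$ lies on the ``upper'' side of the threshold $D^{-(t+1)}$, the defining property of a threshold function gives $\prob{\randomideal\in Q_t}\to 1$, i.e.\ $\dim(S/\randomideal)\le t$ a.a.s. When $t\ge 1$, apply instead the threshold result for $Q_{t-1}$ (legitimate since $0\le t-1\le n-1$), whose threshold is $D^{-((t-1)+1)}=D^{-t}$: because $p=\littleoh{D^{-t}}$ lies on the ``lower'' side, we get $\prob{\randomideal\in Q_{t-1}}\to 0$, i.e.\ $\dim(S/\randomideal)\ge t$ a.a.s. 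For $t=0$ the bound $\dim(S/\randomideal)\ge 0$ is automatic, since the generators drawn by the \modelname{} are non-constant, so $1\notin\randomideal$, $S/\randomideal\neq 0$, and the hypothesis $p=\littleoh{D^{0}}$ is simply not used.

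Finally I would combine the two estimates. Since the Krull dimension takes integer values, $\{\dim(S/\randomideal)=t\}=Q_t\setminus Q_{t-1}$ (with the convention $Q_{-1}=\emptyset$), and because $Q_{t-1}\subseteq Q_t$,
\[
\prob{\dim(S/\randomideal)=t}=\prob{\randomideal\in Q_t}-\prob{\randomideal\in Q_{t-1}},
\]
whose right-hand side tends to $1-0=1$ as $D\to\infty$; this is the claim. Essentially all of the substance of the argument is hidden inside Theorem~\ref{thm:KrullDimThreshold} — the encoding of $\dim(S/\randomideal)$ as the transversal number of a suitable hypergraph and the first-/second-moment computations that pin the threshold for $Q_s$ at $D^{-(s+1)}$ — so at the level of this corollary there is no real obstacle beyond bookkeeping: one must identify the two members of the threshold family to invoke and verify that the ``$\littleomega{\,}$'' hypothesis feeds the upper bound on the dimension while the ``$\littleoh{\,}$'' hypothesis feeds the lower bound. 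The only point requiring an explicit remark is the degenerate case $t=0$, treated above.
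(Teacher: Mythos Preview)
Your proof is correct and follows essentially the same approach as the paper: apply Theorem~\ref{thm:KrullDimThreshold} at the two consecutive values $t$ and $t+1$ (in the theorem's indexing) to force $\dim(S/\randomideal)\le t$ and $\dim(S/\randomideal)\ge t$ respectively, then combine. Your treatment is in fact slightly more careful than the paper's, since you handle the degenerate case $t=0$ explicitly (where the theorem, stated for $1\le t\le n$, cannot be invoked a second time and the lower bound $\dim\ge 0$ must be observed directly).
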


It is very useful to consider the evolution --  as the probability $p$ increases from $0$ to $1$ -- of the random monomial ideal from the \modelname{} 
and its random generating set $\randomgenset$. For very small values of $p$, $\randomgenset$ is all but guaranteed to be empty, and the random 
monomial ideal  is asymptotically almost surely the zero ideal. As $p$ increases, the random monomial ideal  evolves into a more complex ideal 
generated in increasingly smaller degrees 
and support. Simultaneously, as the density of $\randomgenset$ continues to increase with $p$, smaller-degree generators appear; these divide increasingly larger numbers of monomials and the random ideal starts to become less complex as its minimal generators begin to have smaller and smaller support, causing  the Krull dimension to drop. 
 Finally, the random ideal becomes $0$-dimensional and continues to evolve towards the maximal ideal.
 
\begin{figure}[h]
\includegraphics[width=1\linewidth]{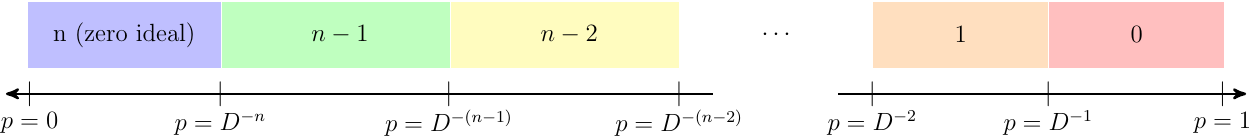}
\caption{Evolution of  the Krull dimension of $S/\randomideal$, where $\randomideal$ is the  random monomial ideal $\randomideal\sim\umodel$, as the probability parameter $p=p(D)$ changes.}   
\label{fig:rangesDtoinfty}
\end{figure}

\smallskip
\paragraph{\bf (C) First Betti numbers and random monomial ideals} 
The \modelname{} induces a distribution on the Betti numbers of the coordinate ring of random monomial ideals. 
To study this distribution, we first ask: \emph{ what are the first Betti numbers generated under the model?}  
A natural way to `understand' a distribution is to compute its expected value, i.e., the average first Betti number. 
In Theorem \ref{thm:mingennvars} we establish the asymptotic behavior for the expected number of minimal generators in a random monomial ideal.

We also establish and quantify threshold behavior of first graded Betti numbers of random monomial ideals $\randomideal\sim\umodel$
in two regimes, as $n$ or $D$ go to infinity, in Theorem~\ref{thm:gradedBettiThreshold}. As in the Krull dimension case, we combine the threshold results to obtain the following corollary:
\begin{cor} \label{cor:gradedBettiThresholdRanges}
Let  $\randomideal\sim\umodel$.
\begin{enumerate}[a)]
\item Let $D$ be fixed, and $d$ be a constant such that $1<d\leq D$. If the  parameter $p=p(n)$ is such that $p(n)=\littleomega{n^{-d}}$ and $p(n)=\littleoh{n^{-d+1}}$ then $\initdeg{\randomideal}=d$  asymptotically almost surely.
\item Let $n$ be fixed. Suppose that $d_i=d_i(D)$, $1 \leq d_i\leq D$ and \mbox{$\lim_{D\to\infty}d_i(D)=\infty$} for $i\in\{1,2\}$. If the  parameter $p=p(D)$ is such that $p(D)=\littleomega{{d_1}^{-n}}$ and $p(D)=\littleoh{{d_2}^{-n}}$ as $D\to\infty$, then  $d_2\leq \initdeg{\randomideal}\leq d_1$  asymptotically almost surely. Note that this is attainable when $d_1(D)=\littleomega {d_2(D)}$.
\end{enumerate}
\end{cor}

While Corollary~\ref{cor:gradedBettiThresholdRanges} explains how the parameter $p$ controls the smallest non-zero first Betti number, 
it gives no information regarding the largest such number; the latter, of course, leads the complexity of the first Betti numbers. 
To that end, we study the degree complexity $\dc{\randomideal}$, introduced by Bayer and Mumford in \cite{bayer+mumford} 
as the maximal degree of any reduced Gr\"obner basis of $I$;  for monomial ideals, this is simply the highest degree of a minimal generator. 
In Theorem~\ref{thm:degreecomplexity} we show that $p$ can be specified so that, even though $\genset$ contains many large degree monomials, none are minimal.

Intuitively, one can think of the above results in following terms:  when the growth of $p$ is bounded \emph{above} by that of $1/D$, 
Corollary~\ref{cor:gradedBettiThresholdRanges} provides the minimum degree of a minimal generator of $\randomideal$. When the growth of $p$ is bounded \emph{below} by that of $1/D$,  Theorem~\ref{thm:degreecomplexity}   provides the maximum degree of a minimal generator of $\randomideal$.  
The evolution of the minimum degree of a minimal generator for the case when $D$ is fixed and $n$ tends to infinity is illustrated in Figure~\ref{fig:rangesDtoinftyBetti}: 
\begin{figure}[h]
\includegraphics[width=1\linewidth]{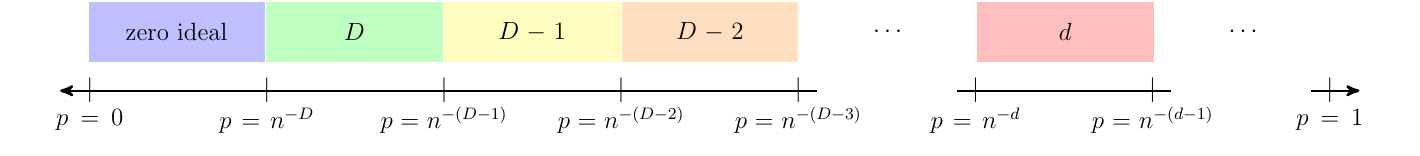}
\caption{Evolution of  the initial degree  of $S/\randomideal$, where $\randomideal$ is the  random monomial ideal $\randomideal\sim\umodel$, as the model parameter $p=p(n)$ changes, for fixed $D$. The initial degree, that is,  the smallest integer $k$ such that $\beta_{1,j}=0$  for $j< k$ and $\beta_{1,k}\neq 0$, is the value displayed in each box.}
\label{fig:rangesDtoinftyBetti}
\end{figure}

\smallskip
\paragraph{\bf (D) Other probabilistic models for generating monomial ideals} 

In Section \ref{sec:othermodels}  we define more general models for random monomial ideals. This is useful to study distributions on ideals that place varying probabilities on monomials depending on a property of interest. For example, one may wish to change the likelihood of monomials according to their total degree.
 To that end,  we define a \emph{general multiparameter model} for random monomials, show how the \modelname{} is a special case of it, and introduce the graded model. In Theorem~\ref{thm:CostaFarberCorresp} we show there is a choice of probability parameters in  the general model for random monomial ideals that recovers, as a special case, the 
 multiparameter model for random simplicial complexes of \cite{costafarber}, which itself 
 generalizes various models for random graphs and clique complexes presented in \cite{kahlesurvey,linial+meshulam}. 

\smallskip
\paragraph{\bf (E) Experiments \& conjectures} 

Section \ref{sec:experiments} contains a summary of computer simulations of additional algebraic properties of random monomial ideals using the \modelname{}. 
For each triple of selected values of model parameters $p$, $n$, and $D$, we generate $1000$  monomial ideals $\randomideal\sim\umodel$ and compute
their various properties. Our simulation results are summarized in Figures~\ref{fig:CM:grid}, \ref{fig:projdim:grid},  \ref{fig:gen:grid}, \ref{fig:regularity:grid}, \ref{fig:hom:radical}, and \ref{fig:hom:squarefree}.  The experiments
suggest several trends and conjectures, which we state explicitly.

\section{Hilbert functions and the Erd\H os-R\'enyi distribution on monomial ideals} \label{sec:hilbertdistribution}

The \emph{Hilbert function}, and the Hilbert polynomial that it determines, are important tools in the classification of ideals via the Hilbert scheme \cite{EisenbudCommAlgBook}.
Of course, monomial ideals are key to this enterprise: given an arbitrary  homogeneous ideal $I$ and a monomial order on $\kring$,  the monomial ideal generated by the leading terms of the polynomials in $I$ has the same Hilbert function as $I$. Therefore the study of 
all Hilbert functions reduces to the study of Hilbert functions of monomial ideals. Moreover, the {Hilbert function} $h_\fixideal(\cdot)$ of a monomial ideal $\fixideal\subseteq S$  has a useful combinatorial meaning: the value of $h_\fixideal(d)$, $d\le D$, counts the number of standard monomials - monomials of degree exactly $d$ that are 
\emph{not} contained in $\fixideal$. 

In this section we explore  the Erd\H os-R\'enyi distribution of monomial ideals and the induced distribution on Hilbert functions. It turns out they are intimately related. 

\subsection{Distribution of monomial ideals} 

Our first theorem precisely describes the distribution $\umodel$ 
on ideals specified by the \modelname{}.
Recall that Theorem~\ref{thm:yuge} states  that  the Hilbert function and first total Betti number determine the probability of any given ideal in $n$ variables generated in degree at most $D$.

\begin{proof}[Proof of Theorem \ref{thm:yuge}]  Fix $\fixideal\subseteq S$ generated in degree at most $D$ and let $\mingenset$ be the unique minimal set of generators of $\fixideal.$  Then, 
  $\randomideal=\fixideal$ if and only if $\randomgenset\supseteq \mingenset$ and no monomial $\mathbf{x}^{\alpha}$ such that $\mathbf{x}^{\alpha}\not\in \fixideal$ is in $\randomgenset$.  Let $A_1$ denote the event that each of the $\beta_1(S/\fixideal)$ elements of $\mingenset$ is in $\randomgenset$ and let $A_2$ denote the event that no monomial $\mathbf{x}^{\alpha}$ such that $\mathbf{x}^{\alpha}\not\in I$ is in $\randomgenset$.  Then, the event $\randomideal=\fixideal$ is equivalent to the event $A_1\cap A_2$.
Since the events $A_1$ and $A_2$ are independent, $P(\randomideal =\fixideal) = P(A_1\cap A_2) = P(A_1)P(A_2)$.
Observe that $P(A_1)=p^{\beta_1(S/\fixideal)}$, since each of the $\beta_1(S/\fixideal)$ elements of $\mingenset$ is chosen to be in $\randomgenset$ independently with probability $p$ and $P(A_2)=(1-p)^{-1+\sum_{d=1}^Dh_I(d)},$ since there are exactly $-1+\sum_{d=1}^Dh_I(d)$ monomials of degree at most $D$ not contained in $I$ and each of them is excluded from $\randomgenset$ independently with probability $1-p$.
\end{proof}

Consider a special case of this theorem in the following example.

\begin{example}[Principal random monomial ideals in 2 variables] Fix $n=2$ and $D>0$.  In this example, we calculate the probability of observing the principal random monomial ideal $\ideal{\randomgenset}=\ideal{ x^{\alpha}y^{\gamma}}\subseteq k[x,y]$, $\alpha+\gamma\le D$.  We observe the ideal $\ideal{ x^{\alpha}y^{\gamma}}$ exactly when $x^{\alpha}y^{\gamma}\in\randomgenset$ \emph{and} $x^{\alpha'}y^{\gamma'}\not\in\randomgenset$ for every monomial $x^{\alpha'}y^{\gamma'}$ with $\alpha'<\alpha\le D$ or $\gamma'<\gamma\le D$.  Thus, we must count the lattice points below the ``staircase" whose only corner is at $(\alpha,\gamma)\in\mathbb{Z}_{\ge0}^2$.  A simple counting argument shows that there are exactly $\frac{\gamma}{2}(2D-\gamma+3)+\frac{\alpha}{2}(2D-\alpha+3)-\alpha\gamma-1$ lattice points.  Hence,
$$\prob{\randomideal=\ideal{ x^{\alpha}y^{\gamma}}}=p(1-p)^{\frac{\gamma}{2}(2D-\gamma+3)+\frac{\alpha}{2}(2D-\alpha+3)-\alpha\gamma-1}.$$  Note that the right hand side of this expression is maximized at $\alpha=1, \gamma=0$ or $\alpha=0, \gamma=1$, so that among the principal ideals, the most likely to appear under the \modelname{} are $\ideal{x}$ and $\ideal{y}$.
\label{principal}
\end{example}

A corollary of Theorem \ref{thm:yuge} follows directly from the well-known formulas for  Stanley-Reisner rings:

\begin{corollary}
If $I$ is a square-free monomial ideal, then the probability of $I$  under the \modelname{} is determined by the number of minimal non-faces and faces of the associated simplicial complex.
\end{corollary}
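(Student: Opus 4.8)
The plan is to run Theorem~\ref{thm:yuge} through the Stanley--Reisner dictionary. Recall that a square-free monomial ideal $\fixideal\subseteq S$ is the Stanley--Reisner ideal $I_\Delta$ of a unique simplicial complex $\Delta$ on $\{1,\dots,n\}$, with $\Delta=\{F\subseteq[n]:\prod_{i\in F}x_i\notin\fixideal\}$ and $I_\Delta=\bigl(\prod_{i\in F}x_i : F\notin\Delta\bigr)$; since Theorem~\ref{thm:yuge} is stated for ideals generated in degree at most $D$, we restrict to those $\Delta$ whose minimal non-faces have at most $D$ vertices. It then suffices to re-express both exponents in the formula of Theorem~\ref{thm:yuge} in terms of the combinatorics of $\Delta$.

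First, the minimal monomial generators of $I_\Delta$ are exactly the square-free monomials $\prod_{i\in F}x_i$ with $F$ a \emph{minimal non-face} of $\Delta$; hence $\beta_1(S/\fixideal)$ equals the number of minimal non-faces of $\Delta$. Second, a monomial of degree $d$ lies outside $I_\Delta$ precisely when its support is a face of $\Delta$, so counting, for each nonempty face $F$ with $|F|\le d$, the compositions of $d$ into $|F|$ positive parts gives the standard formula $h_\fixideal(d)=\sum_{i\ge 1}f_{i-1}(\Delta)\binom{d-1}{i-1}$, where $f_{i-1}(\Delta)$ is the number of $i$-element faces of $\Delta$. Summing over $d$ and applying the hockey-stick identity $\sum_{d=1}^{D}\binom{d-1}{i-1}=\binom{D}{i}$ yields $\sum_{d=1}^{D}h_\fixideal(d)=\sum_{i\ge 1}f_{i-1}(\Delta)\binom{D}{i}$, which depends only on the numbers of faces of $\Delta$ in each dimension (and on $n$, $D$).

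Substituting both expressions into Theorem~\ref{thm:yuge} gives
$$P(\randomideal=\fixideal)=p^{\,m(\Delta)}\,(1-p)^{-1+\sum_{i\ge 1}f_{i-1}(\Delta)\binom{D}{i}},$$
where $m(\Delta)$ denotes the number of minimal non-faces of $\Delta$, which exhibits the probability of $\fixideal$ under the \modelname{} as a function of the minimal non-face count and the face numbers of $\Delta$, as claimed. The only work is the bookkeeping in the Hilbert-function step --- the composition count, the hockey-stick summation, and keeping track of the empty face and the constant term --- and since this is precisely the content of the well-known $f$-vector formula for the Hilbert series of a Stanley--Reisner ring, it can simply be quoted rather than reproved; I do not anticipate any genuine obstacle.
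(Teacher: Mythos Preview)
Your proposal is correct and follows exactly the approach the paper intends: the paper offers no proof beyond the sentence ``follows directly from the well-known formulas for Stanley--Reisner rings,'' and what you have written is precisely those formulas made explicit---identifying $\beta_1(S/\fixideal)$ with the count of minimal non-faces and expressing $\sum_{d=1}^{D}h_\fixideal(d)$ via the $f$-vector. Your version is thus a faithful (and more detailed) fleshing-out of the paper's one-line justification.
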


\subsection{Distribution of Hilbert functions} 

Since the Erd\H os-R\'enyi-type model specifies a distribution on monomial ideals, it provides a formal probabilistic procedure to generate random Hilbert functions, 
in turn providing a very natural question: \emph{what is the induced probability of observing a particular Hilbert function? In other words, what is the most likely Hilbert function 
under the \modelname{}?  If we randomly generate a monomial ideal, are all Hilbert functions equally likely?} 

Not all possible non-negative vectors are first graded Betti vectors, but Lemma \ref{monopoly} below gives an algorithmic way to
determine, using polyhedral geometry, whether  a  particular potential set of first graded Betti  numbers can 
occur for a monomial ideal in $n$ variables. We note that Onn and Sturmfels introduced other polytopes useful in the study of 
initial (monomial) ideals of a zero-dimensional ideal for  $n$ generic points in affine $d$-dimensional space: the  \emph{staircase polytope} 
and the simpler \emph{corner cut polyhedron} \cite{onnsturmfels}. 

\begin{lm} \label{monopoly} 
Denote by $NMon(n,D,h)$ the number of possible monomial ideals in $n$ variables, with generating monomials of degree no more than $D$ and given Hilbert function $h$. 
Then $NMon(n,D,h)$ is equal to the number of vertices of the $0-1$ convex polytope $Q(n,D,h)$ defined by 

\begin{align*}
\sum_{|\alpha|=L} x_{\alpha} &= {n+L-1 \choose L} - h(L), \ \ \forall L=1,\dots D\\
& x_{\alpha} \leq x_{\gamma}, \quad \forall   \, \alpha \leq \gamma , \, |\alpha|+1 = |\gamma|, \qquad
\end{align*}

where  $\alpha, \gamma$ denote exponent vectors of monomials with $n$ variables and total degree no more than $D$, thus
the system has  ${n+D \choose D}-1$ variables.
\end{lm}

\begin{proof}[Proof of Lemma \ref{monopoly}]
The variables $x_{\alpha}$ are indicator variables recording when a monomial $x^{\alpha}$, for \mbox{$0<|\alpha|\leq D$,} is chosen to be in the ideal ($x_{\alpha} = 1$) or is not chosen to be in the ideal ($x_{\alpha} = 0$). The first type of equation forces the values of the Hilbert function at degree $L$ to be satisfied.
The  inequalities $x_{\alpha} \leq x_{\gamma}$, for all $\alpha\leq\gamma$, ensure the set of chosen monomials 
is closed under divisibility and thus forms an ideal. A non-redundant subset of these satisfying $|\alpha|+1=|\gamma|$ 
suffices to cut out the polyhedron. 
\end{proof}

\begin{thm} \label{thm:beautiful} Let $h$ 
 be a Hilbert function for an ideal of the polynomial ring in $n$ variables. As before,  $Q(n,D,h)$ is the polytope whose vertices are in one-to-one correspondence with the distinct monomial ideals in $n$ variables, with generators of degree less than or equal to $D$, and Hilbert function $h$.  Then the probability of generating a random monomial ideal $\randomideal$ from the \modelname{} with Hilbert function $h$ is expressed by the following formula

\begin{equation}\label{probabilityOfHilbert} 
P(h_\randomideal = h)= 
(1-p)^{-1+\sum_{d=1}^Dh(d)} \sum_{v \in V(Q(n,D,h))} p^{\beta_1(v)},
\end{equation}
where $\beta_1(v)$ is the total first Betti number of the monomial ideal associated to the vertex $v$.
\end{thm}

\begin{proof}[Proof of Theorem \ref{thm:beautiful}]
To derive the above formula for the probability of randomly generating a monomial ideal with a particular Hilbert function $h$ using the ER model, we decompose the random event into probability events that are combinatorially easy to count. 
The probability of generating random monomial ideals with a given Hilbert function $h$ is the sum of the probabilities of disjoint events enumerated by the vertices $v$ in the polytope $Q(n,D,h)$ (see Lemma \ref{monopoly}). 
For each choice of $v$, the probability of the corresponding ideal being randomly generated by our model is given in Theorem~\ref{thm:yuge} and is completely determined by its Hilbert function and first total Betti number. These ideals share a Hilbert function, $h$, and so $(1-p)^{-1+\sum_{d=1}^Dh(d)}$ is a common factor for all terms. The choice of $v$ determines the ideal and its first Betti number.
\end{proof}

Some final remarks about Theorem \ref{thm:beautiful}. First of all, one can find examples that show that the Hilbert function of an ideal does not uniquely determine the value of the number of minimal generators.
Second, the values for the Betti numbers have explicit tight bounds from the Bigatti-Hulett-Pardue theorem (see \cite{Bigatti},\cite{Hulett}, \cite{Pardue}). Finally, note that, from the point of view of computer science, it is useful to know that the Hilbert function $h$ can be specified with one polynomial (the Hilbert polynomial) accompanied by a list of finitely many values that do not fit the polynomial.

\section{Krull dimension} \label{sec:krull}

In this section, we study the Krull dimension $\dim S/\randomideal$ of a random monomial ideal under the model $\umodel$.
First, in Section~\ref{sec:krullPolynomial},  we give an explicit formula to calculate the probability of $\randomideal$ having a given Krull dimension.
Second, in Section~\ref{sec:krullThresholds}, we establish threshold results characterizing the asymptotic behavior of the Krull dimension and we provides ranges of values of $p$ that, asymptotically, control the Krull dimension of the random monomial ideal.

\subsection{Hypergraph Transversals and Krull dimension} 
\label{sec:Transversals}

Let us recall some necessary definitions. A \emph{hypergraph} $H$ is a pair $H=(V,E)$ such that $V$ is a finite vertex set and $E$ is a collection of non-empty subsets of $V$, called \emph{edges}; we allow edges of size one. A \emph{clutter} is a hypergraph where no edge is contained in any other (see, e.g., ~\cite[Chapter~1]{cornuejols}). 
A \emph{transversal} of $H$, also called a \emph{hitting set} or a \emph{vertex cover}, is a set $T\subseteq V$ such that no edge in $E$ has a nonempty intersection with $T$. The \emph{transversal number} of $H$, also known as its \emph{vertex cover number} and denoted $c(H)$, is the minimum cardinality of any transversal of $H$. 

Given a collection $\genset$ of monomials in $\kring$, the \emph{support hypergraph} of $\genset$ is the hypergraph $G(\genset)$ with vertex set $V=\{x_1,\ldots,x_n\}$, and edge set $E=\{\text{supp}({x}^\alpha)\mid {x}^\alpha\in \genset\}$, where $\textrm{supp}({x}^\alpha)$ denotes the support of the monomial ${x}^\alpha$, i.e., $x_i\in\textrm{supp}({x}^\alpha)$, if and only if $x_i\mid {x}^\alpha$. What we require here is 
that for $\fixideal=\ideal{\genset}$, the Krull dimension of $S/\fixideal$ is determined by the transversal number of $G(\genset)$ through the formula 
\begin{equation}\label{eq:KrullDim}
\dim S/\fixideal + c\left(G(\genset)\right) = n.
\end{equation}
This formula is known to hold for square-free monomial ideals minimally generated by $B$~(see e.g., \cite{Faridi05}). 
Since the support hypergraph of $\genset$ is equal to that of its maximal square-free subset and $c\left(G(\genset)\right)$ 
is invariant under the choice of generating set $\genset$ of $I$, the equation, obviously, also holds for general ideals. 

\subsection{Probability distribution of Krull dimension}
\label{sec:krullPolynomial}

The relationship between Krull dimension and hypergraph transversals leads to a complete characterization of the probability of producing a monomial ideal with a particular fixed Krull dimension in the \modelname{} (Theorem~\ref{thm:krulldim}). Explicitly computing the values given by Theorem~\ref{thm:krulldim} requires an exhaustive combinatorial enumeration, which we demonstrate for several special cases in Theorem~\ref{thm:KrullDimSpecificValues}. Nevertheless, where the size of the problem makes enumeration prohibitive, Theorem~\ref{thm:krulldim} gives that 
 $\prob{\dim S/\randomideal =t}$ is always a polynomial in $p$ of a specific degree, and thus can be approximated by
 numerically evaluating $\prob{\dim S/\randomideal =t}$ for a sufficient number of $p$ values,
and interpolating.

\begin{thm}\label{thm:krulldim} 
Let $\randomideal\sim\umodel$. For any integer $t$, $0\leq t\leq n$, the probability that $S/\randomideal$ has Krull dimension $t$ is given by a polynomial in $p$ of degree $\sum_{i=1}^{t+1} {D\choose i}{n\choose i}$. More precisely,
	\[
	\prob{\dim S/\randomideal =t} = \sum_{C\in \mathcal{C}_{n-t}}
	\prod_{\sigma\in E(C)} 1-(1-p)^{D\choose |\sigma|} 
\prod_{\substack{\sigma'\subset\{x_1,\ldots,x_n\}\\\sigma\not\subseteq \sigma'\forall \sigma\in E(C)}} (1-p)^{D\choose |\sigma'|}, 
	\] where $\mathcal{C}_{n-t}$ is the set of all clutters on $\{x_1,\ldots,x_n\}$ with transversal number $n-t$.
\end{thm}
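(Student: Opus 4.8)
\smallskip
\noindent\textbf{Proof strategy.} The plan is to pass from the Krull dimension to the random ``support clutter'' and then exploit the product structure of the model. By the identity $\dim S/\randomideal + c(G(\randomgenset)) = n$ of \eqref{eq:KrullDim}, the event $\{\dim S/\randomideal = t\}$ coincides with $\{c(G(\randomgenset)) = n-t\}$. Since deleting non-minimal edges of a hypergraph leaves its transversal number unchanged, $c(G(\randomgenset))$ depends only on the clutter $C(\randomgenset)$ whose edges are the inclusion-minimal elements of $\{\mathrm{supp}(\mathbf{x}^\alpha)\mid \mathbf{x}^\alpha\in\randomgenset\}$; hence $\prob{\dim S/\randomideal = t} = \sum_{C\in\mathcal{C}_{n-t}}\prob{C(\randomgenset)=C}$, and it suffices to compute each term.

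To do so, I would partition the non-constant monomials of degree $\le D$ by support: for a nonempty $\sigma\subseteq\{x_1,\dots,x_n\}$ with $|\sigma|=i$, the monomials with support exactly $\sigma$ are in bijection with the nonnegative integer vectors on $\sigma$ of coordinate-sum at most $D-i$ (subtract $1$ from each exponent in $\sigma$), of which there are $\binom{D}{i}$. These $\binom{n}{i}$ blocks are filled by independent coin flips, so the events $E_\sigma:=\{\randomgenset$ contains some monomial with support exactly $\sigma\}$ are mutually independent with $\prob{E_\sigma}=1-(1-p)^{\binom{D}{|\sigma|}}$. Now $C(\randomgenset)=C$ holds if and only if (i) $E_\sigma$ occurs for all $\sigma\in E(C)$ and (ii) $E_{\sigma'}$ fails for every $\sigma'$ containing no edge of $C$: granting (i) and (ii), every occurring support contains an edge of $C$, so by the clutter property the minimal occurring supports are precisely $E(C)$, and the converse is immediate. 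As (i) and (ii) constrain disjoint families of supports --- the proper supersets of edges being unconstrained --- independence gives
\[
\prob{C(\randomgenset)=C}=\prod_{\sigma\in E(C)}\Bigl(1-(1-p)^{\binom{D}{|\sigma|}}\Bigr)\prod_{\substack{\sigma'\ne\emptyset\\ \sigma\not\subseteq\sigma'\ \forall\sigma\in E(C)}}(1-p)^{\binom{D}{|\sigma'|}},
\]
and summing over $C\in\mathcal{C}_{n-t}$ yields the displayed formula. (Sanity check: for $t=n$ only the empty clutter qualifies and the sum collapses to $(1-p)^{W}$, where $W:=\sum_{i\ge1}\binom{n}{i}\binom{D}{i}=\binom{n+D}{D}-1$ is the number of non-constant monomials of degree $\le D$ --- the probability that $\randomgenset=\emptyset$.)

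For the degree claim, each factor $1-(1-p)^m$ and $(1-p)^m$ is a degree-$m$ polynomial in $p$, so a clutter $C$ all of whose edges have size $\le D$ (otherwise the summand is identically $0$) contributes a polynomial of degree $\deg(C)=\sum_{\sigma\in E(C)}\binom{D}{|\sigma|}+\sum_{\sigma'\ne\emptyset,\ \sigma\not\subseteq\sigma'\,\forall\sigma}\binom{D}{|\sigma'|}=W-\sum_{\sigma\,\supsetneq\,\text{some edge of }C}\binom{D}{|\sigma|}$. The crux is to show that, among clutters of transversal number $n-t$, this degree is maximized --- uniquely when $t+1\le D$ --- by the uniform clutter $C_0$ of all $(t+1)$-subsets, which has transversal number $n-(t+1)+1=n-t$ and $\deg(C_0)=\sum_{i=1}^{t+1}\binom{n}{i}\binom{D}{i}$. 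To prove this I would first note that $c(C)=n-t$ forces every subset of size $\ge t+1$ into the up-closure $\uparrow E(C)$ (an independent $(t+1)$-set would yield a transversal of size $n-t-1$), with equality $\uparrow E(C)=\{\sigma:|\sigma|\ge t+1\}$ exactly when $C=C_0$; then, writing $U=\uparrow E(C)$, a short weight computation shows $\sum_{\sigma\in U}\binom{D}{|\sigma|}-\sum_{\sigma\in\min U}\binom{D}{|\sigma|}$ strictly increases whenever $U$ is enlarged past $\{\sigma:|\sigma|\ge t+1\}$ by adjoining the up-closure of a set of size $\le t$, since such a small edge adds its own weight while demoting some $(t+1)$-sets out of $\min U$. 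Given $\deg(C)<\deg(C_0)$ for every other admissible $C$, the leading coefficient of the sum equals that of the $C_0$-summand --- a nonzero product of nonzero leading coefficients --- so the polynomial has degree exactly $\sum_{i=1}^{t+1}\binom{D}{i}\binom{n}{i}$.

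I expect this last step --- the uniqueness of $C_0$ as the maximizer, hence the non-cancellation of the top-degree summands --- to be the main obstacle, as it hinges on carefully tracking how enlarging a clutter's up-closure simultaneously changes both the up-closure and its set of minimal elements; in the residual regime $t+1>D$, where the $C_0$-summand vanishes, one must instead argue that the coefficient of $(1-p)^{W}$ in the sum, which equals $\sum_{\mathcal{E}}(-1)^{|\mathcal{E}|}$ over $D$-uniform edge-sets $\mathcal{E}$ of transversal number $n-t$, does not vanish. The first two steps are routine bookkeeping with independent events.
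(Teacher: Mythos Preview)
Your derivation of the main formula is essentially the paper's: pass to the minimal support clutter, decompose the event over $\mathcal{C}_{n-t}$, and use independence across support classes (with the count $\binom{D}{|\sigma|}$ of monomials of given support). This part is correct and matches the paper line for line.

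For the degree claim the paper takes a more direct route than your inductive enlargement of the up-closure. It first proves two size bounds via transversal arguments: every edge of a clutter $C\in\mathcal{C}_{n-t}$ has cardinality at most $t+1$ (an edge of size $\ge t+2$ is covered by a single vertex, and its complement covers the remaining edges, giving a transversal of size $\le n-t-1$), and every ``forbidden'' $\sigma'$ has cardinality at most $t$ (this is exactly your observation that $\uparrow E(C)\supseteq\{\sigma:|\sigma|\ge t+1\}$). Together these cap each summand's degree at $\sum_{i=1}^{t+1}\binom{D}{i}\binom{n}{i}$. For uniqueness the paper simply notes that if $C\neq K_n^{t+1}$ then $C$ has an edge $e$ with $|e|\le t$, and every $(t+1)$-set containing $e$ lies in \emph{neither} product (not an edge, by the clutter property; not forbidden, since it contains $e$), so strictly fewer than $\binom{n}{t+1}$ factors of degree $\binom{D}{t+1}$ appear. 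This one-step comparison replaces your induction entirely.

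Your inductive argument can be made to work, but the justification you give---``such a small edge \dots\ demot[es] some $(t+1)$-sets out of $\min U$''---is only guaranteed at the \emph{first} enlargement beyond $U_0=\{\sigma:|\sigma|\ge t+1\}$. At later steps the relevant $(t+1)$-sets may already have been demoted: for $n=3$, $t=1$, after adjoining $\uparrow\{1\}$ and $\uparrow\{2\}$ to $U_0$, adjoining $\uparrow\{3\}$ demotes nothing and adds no new strict supersets, so the increment is zero. What rescues the argument is that every step is \emph{weakly} monotone (your ``short weight computation'' gives increment $=\sum_{e\subsetneq\sigma,\ \sigma\notin U}\binom{D}{|\sigma|}+\sum_{\tau\in\min U,\ e\subsetneq\tau}\binom{D}{|\tau|}\ge 0$) while the first step is strictly so; you should say this. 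You are right that the residual case $t+1>D$, where $\binom{D}{t+1}=0$ and the $K_n^{t+1}$ summand vanishes, is not handled by the paper's argument either.
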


\begin{proof}
Let $\randomgenset$ be the random generating set that gave rise to $\randomideal$. 
By Equation~\eqref{eq:KrullDim}, the probability that $S/\randomideal$ has Krull dimension $t$ is equal to the probability that $c(G(\randomgenset))=n-t$.
Let $G_{\min} (\randomgenset)$ be the
hypergraph obtained from $G(\randomgenset)$ by deleting all edges of $G(\randomgenset)$ that are strict supersets of other edges in $G(\randomgenset)$: 
in other words, $G_{\min}(\randomgenset)$ contains all edges $\sigma$ of $G(\randomgenset)$ for which there is no $\sigma'$ in $G(\randomgenset)$ such that $\sigma'\subsetneq \sigma. $

Also let $\mathcal{C}_{n-t}$ denote the set of clutters on $\{x_1,\ldots,x_n\}$ with transversal number $n-t$. Then $c(G(\randomgenset))=n-t$ 
if and only if $G_{\min}(\randomgenset)=C$ for some $C\in\mathcal{C}_{n-t}$. As these are disjoint events, we have
\begin{equation}
\prob{\dim S/\randomideal =t} = 
\sum_{C\in \mathcal{C}_{n-t}} \prob{G_{\min}(\randomgenset)= C}.
\label{eq:probdim1}
\end{equation}

Let  $\sigma\subset\{x_1,\ldots,x_n\}$. 
There are ${D\choose |\sigma|}$ monomials supported on $\sigma$, so
\[
\prob{\sigma\in E(G(\randomgenset))}=1-(1-p)^{D\choose |\sigma|},\text{ and }
\prob{\sigma\not\in E(G(\randomgenset))}=(1-p)^{D\choose |\sigma|}.
\]
Note that both expressions are polynomials in $p$ of degree exactly ${D\choose |\sigma|}$.
Now $G_{\min}(\randomgenset)= C$ if only if:
\begin{enumerate}
\item every edge $\sigma$ of $C$ is an edge of $G(\randomgenset)$, and
\item every $\sigma'\subset\{x_1,\ldots,x_n\}$ such that $\sigma'$ does not contain any $\sigma\in E(C)$  is \emph{not} an edge of $G(\randomgenset)$.\\
\end{enumerate} 
Note that condition 2 is the contrapositive of: any edge of $G(\randomgenset)$ which is not an edge of $C$ is not minimal.
Since edges are included in $G(\randomgenset)$ independently, the above are also independent events and the probability of their intersection is
\begin{align}\label{eq:probdim2}
\prob{G_{\min}(\randomgenset)= C}
&=\nonumber
\prod_{\sigma\in E(C)}
 \prob{\sigma\in E(G(\randomgenset))}
\prod_{\substack{\sigma'\subset\{x_1,\ldots,x_n\}\\\sigma\not\subseteq \sigma'\forall \sigma\in E(C)}} \prob{{\sigma'\not\in E(G(\randomgenset))}}\\
&=
\prod_{\substack{\sigma\in E(C)\\ \text{s.t.\ }|\sigma|\leq t+1}} \prob{\sigma\in E(G(\randomgenset))}
\prod_{\substack{\sigma'\subset\{x_1,\ldots,x_n\}\\ \text{s.t.\ }|\sigma'|\leq t \text{ and }\\
		\sigma\not\subseteq \sigma'\forall \sigma\in E(C)}} \prob{{\sigma'\not\in E(G(\randomgenset))}}.
\end{align}

The last equality follows because for $C\in \mathcal{C}_{n-t}$, if $\sigma$ is an edge of $C$ then $|\sigma|\leq t+1$, and also if $\sigma'$ is a subset of $\{x_1,\ldots,x_n\}$ satisfying $\sigma\not\subseteq \sigma'$ for all $\sigma\in E(C)$, then $|\sigma'|\leq t$. To show the first statement, suppose $\sigma\in E(C)$ with $|\sigma|>t+1$. Since $C$ is a clutter, no proper subset of $\sigma$ is an edge of $C$, so for every $\sigma'\in E(C)$, $\sigma'\neq \sigma$, $\sigma'$ contains at least one vertex not in $\sigma$. Hence the set $T=\{x_1,\ldots,x_n\}\backslash \sigma$ intersects every edge of $C$ except $\sigma$. By taking the union of $T$ with any one vertex in $\sigma$, we create a transversal of $C$ of cardinality $|T|+1=n-|\sigma|+1<n-t$, contradicting $c(C)=n-t$.

For the second statement, suppose $|\sigma'|>t$. By assumption no edge of $C$ is a subset of $\sigma'$, so every edge of $C$ contains at least one vertex in the set $T=\{x_1,\ldots,x_n\}\backslash \sigma'$. Hence $T$ is a transversal of $C$ with $|T|=n-|\sigma'|<n-t$, again a contradiction.

No subset of $\{x_1,\ldots,x_n\}$ can appear in both index sets of \eqref{eq:probdim2}, and each subset that does appear contributes a polynomial in $p$ of degree $D\choose i$. It follows that \eqref{eq:probdim2} is a polynomial in $p$ of degree no greater than $\sum_{i=1}^{t+1}{D\choose i}{n\choose i}$, and hence so is $\prob{\dim S/\randomideal =t}$ as \eqref{eq:probdim1} is a sum of such polynomials.

To prove this bound is in fact the precise degree of the polynomial, we show there is a particular clutter for which \eqref{eq:probdim1} has degree exactly $\sum_{i=1}^{t+1}{D\choose i}{n\choose i}$, and furthermore, that for every other clutter the expression in \eqref{eq:probdim2} is of strictly lower degree. Hence the sum over clutters in \eqref{eq:probdim1} has no cancellation of this leading term.

Consider the hypergraph $K^{t+1}_n$ that contains all ${n\choose {t+1}}$ edges of cardinality $t+1$ and no other edges. Then $K^{t+1}_n\in \mathcal{C}_{n-t}$ and
\begin{align*}
\prob{G_{\min}(\randomgenset)= K^{t+1}_n}
&=
\prod_{\substack{\sigma\in \{x_1,\ldots,x_n\}\\|\sigma|=t+1}} \prob{\sigma\in E(G(\randomgenset))}
\prod_{\substack{\sigma\in \{x_1,\ldots,x_n\}\\|\sigma|\leq t}} \prob{\sigma\not\in E(G(\randomgenset))}\\
&
=
(1-(1-p)^{D\choose{t+1}})^{n\choose{t+1}}(1-p)^{Dn+{D\choose 2}{n\choose 2}+\cdots+{D\choose t}{n\choose t}}
\end{align*}
which has the correct degree. 
On the other hand, if $C\in\mathcal{C}_{n-t}$ and $C\neq K^{t+1}_n$, then at least one edge $\sigma$ of $C$ is not an edge of $K^{t+1}_n$; hence $|\sigma|\leq t$. All subsets properly containing $\sigma$ are neither edges of $C$, nor do they satisfy condition 2 above, hence these subsets are not indexed by either product in \ref{eq:probdim2}. In particular there are positively many subsets of cardinality $t+1$ which do not contribute factors to $\prob{G_{\min}(\randomgenset)= C}$.
\end{proof}

To use the main formula of Theorem \ref{thm:krulldim}, one must be able to enumerate all clutters on $n$ vertices with transversal number $n-t$. 
When $t$ is very small or very close to $n$ this is tractable, as we see in the following theorem.

\begin{thm} \label{thm:KrullDimSpecificValues}
Let $\randomideal\sim\umodel$. 
Then,  
\begin{enumerate}[a)]

\item $\prob{\dim S/\randomideal=0}=\left(1-(1-p)^D\right)^n.$

\item $\prob{\dim S/\randomideal=1}=
\sum_{j=0}^{n-1}{n\choose j}(1-(1-p)^D)^j(1-p)^{D(n-j)}\left(1-(1-p)^{D\choose 2}\right)^{{n-j}\choose 2}.$
\item $\prob{\dim S/\randomideal=n-1}=
-(1-p)^{{{n+D}\choose n}-1}+\sum_{j=1}^{n} (-1)^{j-1}{n\choose j}(1-p)^{{{n+D}\choose n}-1-{{n+D-j}\choose n}}.$
\item $\prob{\dim S/\randomideal=n}=(1-p)^{{{n+D}\choose n}-1}.$
\end{enumerate}
For $n\leq 4$, the listed formulas gives the complete probability distribution of Krull dimension induced by $\umodel$, for any integer $D$.
\end{thm}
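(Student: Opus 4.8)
The plan is to derive each of the four formulas as a specialization of Theorem~\ref{thm:krulldim}, by explicitly enumerating the relevant clutters $\mathcal{C}_{n-t}$ for $t=0,1,n-1,n$, and then to observe that for $n\le 4$ the values $t\in\{0,1,n-1,n\}$ exhaust all of $\{0,1,\dots,n\}$.

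First I would handle the two extreme cases, which are the cleanest. For $t=n$: the only clutter on $\{x_1,\dots,x_n\}$ with transversal number $0$ is the empty clutter (no edges), so $\mathcal{C}_0=\{\emptyset\}$; applying Theorem~\ref{thm:krulldim}, the first product is empty and the second runs over \emph{all} nonempty $\sigma'\subseteq\{x_1,\dots,x_n\}$, giving $\prod_{\sigma'}(1-p)^{\binom{D}{|\sigma'|}}=(1-p)^{\sum_{i=1}^n\binom{n}{i}\binom{D}{i}}$, and since $\sum_{i=1}^n\binom{n}{i}\binom{D}{i}=\binom{n+D}{n}-1$ by Vandermonde, this is $(1-p)^{\binom{n+D}{n}-1}$, matching (d); note this is also just $P(\randomgenset=\emptyset)$ from the displayed formula in the introduction, which is a good sanity check. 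For $t=0$: the clutters with transversal number $n$ are exactly those requiring every vertex to be hit, i.e.\ the unique minimal such clutter is the set of $n$ singletons $\{\{x_1\},\dots,\{x_n\}\}$, and this is the \emph{only} clutter in $\mathcal{C}_n$ (any clutter containing a singleton $\{x_i\}$ can contain no other edge through $x_i$, and needs a singleton for every vertex to force transversal number $n$). Then $|\sigma|\le t+1=1$ forces the first product to be $\prod_{i=1}^n(1-(1-p)^{\binom{D}{1}})=(1-(1-p)^D)^n$, and the second product is empty (there is no nonempty $\sigma'$ of size $\le t=0$). This gives (a). I expect (a) and (d) to be essentially immediate.

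Next, case (b), $t=1$: here I would enumerate $\mathcal{C}_{n-1}$, the clutters with transversal number $n-1$. The edge sets of such clutters have all edges of size $\le t+1=2$ (by the degree bound in the proof of Theorem~\ref{thm:krulldim}), and the "forbidden" subsets $\sigma'$ have size $\le t=1$. A clutter $C$ on $\{x_1,\dots,x_n\}$ with edges of size $\le 2$ has $c(C)=n-1$ precisely when: the set $S$ of vertices appearing as a singleton edge, together with the graph structure on the remaining $n-|S|$ vertices, forces the cover number correctly; a short combinatorial argument shows this happens iff the edges consist of some $j$ singletons (on a set $J$ of size $j$, $0\le j\le n-1$) together with a graph $G$ on the complementary $n-j$ vertices that has \emph{no isolated vertex} — equivalently $C_{\min}$ must hit every vertex, and on the $n-j$ non-singleton vertices the minimum vertex cover of $G$ must equal $(n-j)-1$, which for a graph means $G$ is connected... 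I would need to get this characterization exactly right. Then for fixed $J$ with $|J|=j$, Theorem~\ref{thm:krulldim} contributes: $\prod_{x\in J}(1-(1-p)^D)$ for the singleton edges; $(1-p)^{D(n-j)}$ for the size-$1$ subsets not covered by an edge of $C$, which are exactly the $n-j$ non-singleton vertices; and for the size-$2$ edges of $G$, a factor $(1-(1-p)^{\binom{D}{2}})$ each, summed appropriately over all valid graphs $G$ on $n-j$ vertices. Carefully collapsing the sum over graphs $G$ with no isolated vertex should produce the factor $(1-(1-p)^{\binom{D}{2}})^{\binom{n-j}{2}}$ — this requires recognizing that summing $\prod_{\text{edges of }G}a$ over all spanning subgraphs with a given support condition, against the complementary "not an edge" weight $1$, telescopes; I would double-check the bookkeeping since the formula in (b) has \emph{no} "$1$ minus" on the non-edge pairs, suggesting the enumeration is actually over which size-$2$ subsets are present in $G(\randomgenset)$ rather than in $C_{\min}$, and the factor $(1-(1-p)^{\binom{D}{2}})^{\binom{n-j}{2}}$ is the probability that on those $n-j$ vertices \emph{every} pair supports at least one monomial — i.e.\ it is cleaner to argue directly: condition on which $j$ of the $n$ variables divide some degree-$1$ generator, forcing those into the radical, and require that on the remaining $n-j$ variables every $2$-subset is "hit" so that the transversal of the support hypergraph is exactly $n-1$. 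I would present (b) via this direct conditioning argument rather than literally unwinding Theorem~\ref{thm:krulldim}, as it is more transparent.

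The remaining case (c), $t=n-1$, I would prove by complementation together with inclusion–exclusion, since enumerating $\mathcal{C}_1$ (clutters with transversal number $1$) directly is awkward. The event $\dim S/\randomideal\ge n-1$ is the event $\dim S/\randomideal\in\{n-1,n\}$; equivalently $c(G(\randomgenset))\le 1$, i.e.\ either $\randomgenset=\emptyset$ or all generators share a common variable. So $P(\dim S/\randomideal=n-1)=P(c(G(\randomgenset))\le 1)-P(\randomgenset=\emptyset)$, and the subtracted term is $(1-p)^{\binom{n+D}{n}-1}$ from above, accounting for the leading $-(1-p)^{\binom{n+D}{n}-1}$ in (c). For $P(c(G(\randomgenset))\le 1)$, note $c(G(\randomgenset))\le 1$ iff there exists a variable $x_i$ dividing every chosen monomial; let $A_i$ be the event "$x_i$ divides every monomial in $\randomgenset$". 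Then $A_i$ is the event that no monomial \emph{not} divisible by $x_i$ is chosen, and the number of monomials of degree $\le D$ not divisible by $x_i$ is $\binom{n+D-1}{n-1}-1$... wait — more carefully, the monomials divisible by $x_i$ of degree $\le D$ number $\binom{n+D-1}{n}$ (bijection $\mathbf{x}^\alpha\mapsto\mathbf{x}^\alpha/x_i$ onto monomials of degree $\le D-1$), so $P(A_i)=(1-p)^{(\binom{n+D}{n}-1)-\binom{n+D-1}{n}}$; and $A_{i_1}\cap\cdots\cap A_{i_k}$ requires every chosen monomial divisible by $x_{i_1}\cdots x_{i_k}$, of which there are $\binom{n+D-k}{n}$, giving $P(\bigcap)=(1-p)^{(\binom{n+D}{n}-1)-\binom{n+D-k}{n}}$. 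Inclusion–exclusion $P(\bigcup_i A_i)=\sum_{j=1}^n(-1)^{j-1}\binom{n}{j}(1-p)^{\binom{n+D}{n}-1-\binom{n+D-j}{n}}$ then yields exactly the sum in (c). The main obstacle is case (b) — pinning down the exact combinatorial characterization of clutters with transversal number $n-1$ (or equivalently, verifying the direct conditioning argument genuinely forces $c=n-1$ and not $c\le n-1$) and then checking the graph-enumeration sum collapses to the stated closed form. Finally, the last sentence ("for $n\le 4$ this is the complete distribution") is immediate: for $n\le 4$, $\{0,1,n-1,n\}=\{0,1,\dots,n\}$ since the two middle values coincide or are absent, so (a)–(d) cover every possible Krull dimension.
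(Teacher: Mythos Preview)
Your overall plan matches the paper's proof: (a) and (d) are handled exactly as the paper does, and for (c) the paper likewise abandons enumeration of $\mathcal{C}_1$ in favor of the direct inclusion--exclusion on the events $F_i=\{x_i\text{ divides every monomial in }\randomgenset\}$, subtracting off $\prob{\randomgenset=\emptyset}$.

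There are two genuine gaps. In (b), your intermediate guesses for the characterization of $\mathcal{C}_{n-1}$ are wrong: a graph $G$ on $m$ vertices has vertex cover number $m-1$ if and only if its independence number is $1$, i.e.\ $G=K_m$ --- not ``connected'' and not merely ``no isolated vertex'' (a path $P_3$ has cover number $1$, not $2$). The paper's characterization is: $C\in\mathcal{C}_{n-1}$ if and only if, for some $V\subset\{x_1,\dots,x_n\}$ with $|V|=j\in\{0,\dots,n-1\}$, the edge set of $C$ is exactly the $j$ singletons on $V$ together with \emph{all} $\binom{n-j}{2}$ pairs on $W=\{x_1,\dots,x_n\}\setminus V$. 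Once you have this, there is precisely one clutter per choice of $V$, so the sum over $\mathcal{C}_{n-1}$ is simply a sum over $j$ weighted by $\binom{n}{j}$ --- there is no ``graph-enumeration sum'' to collapse. Your eventual ``every $2$-subset is hit'' is the correct condition, but it is the characterization of the clutter itself, not a separate conditioning step. (Incidentally, ``which variables divide some degree-$1$ generator'' is not the event you want; the relevant event is which variables have some pure power $x_i^k$ in $\randomgenset$, i.e.\ which singletons lie in $G(\randomgenset)$.)

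Second, your argument for the final sentence fails at $n=4$: there $\{0,1,n-1,n\}=\{0,1,3,4\}\neq\{0,1,2,3,4\}$. Parts (a)--(d) give no formula for $\prob{\dim S/\randomideal=2}$ when $n=4$; that value is recovered by complementation, since the five probabilities sum to $1$. For $n\le 3$ your set-equality argument is fine.
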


\begin{proof}
 \emph{Proof of Part (a)}
For $\randomgenset\sim\gensetdist$, $\randomideal=\ideal{ \randomgenset}$, $S/\randomideal$ is zero dimensional if and only if $G_{\min}(\randomgenset)\in\mathcal{C}_n$.
There is a single clutter on $n$ vertices with transversal number $n$: the one with edge set $\{\{x_1\},\{x_2\},\ldots,\{x_n\}\}$. 
Hence by Theorem~\ref{thm:krulldim}, 
\[
\prob{\dim S/\randomideal =0} = 
\prod_{i=1}^n 1-(1-p)^{D\choose {|\{x_i\}|}}
=
\left(1-(1-p)^D\right)^n.
\]

\noindent \emph{Proof of Part (b):}  $S/\randomideal$ is one-dimensional if and only if
 $G_{\min}(\randomgenset)\in\mathcal{C}_{n-1}$. We wish to describe $\mathcal{C}_{n-1}$. Suppose $C$ is a clutter on $n$ vertices and exactly $j$ of the vertices are contained in a 1-edge. Then $j\neq n$ else $C$ would be the clutter from part $a$, so let
 \mbox{$0\leq j\leq n-1$,} and denote by $V$ the set of these $j$ vertices. Then $V$ is a subset of any transversal of $C$. Let \mbox{$W= \{x_1,\ldots,x_n\}\backslash V$,} then it can be shown that
$c(C)=n-1$ if and only $E(C)= \{\{x_i\}\mid x_i\in V\}\cup\{\{x_i,x_k\}\mid x_i,x_k\in W, x_i\neq x_k\}$. 
Hence $\prob{G_{\min}(\randomgenset)=C}$ equals
\begin{align*}
\prod_{x_i\in V}\prob{\{x_i\}\in E(G(\randomgenset))}\prod_{x_i,x_k\in W}\prob{\{x_i,x_k\}\in E(G(\randomgenset))}\prod_{x_i\in W}\prob{\{x_i\}\not\in E(G(\randomgenset))}\\
 =
(1-(1-p)^D)^j\left(1-(1-p)^{D\choose 2}\right)^{{n-j}\choose 2}(1-p)^{D(n-j)}.
\end{align*}
The expression for $\prob{\dim S/\randomideal=1}$ is obtained by summing over all ${n\choose j}$ ways of selecting the $j$ 1-edges, for each $0\leq j\leq n-1$.

\noindent \emph{Proof of Part (c):}
For the case of $(n-1)$--dimensionality, Theorem \ref{thm:krulldim} requires us to consider clutters with transversal number 1: clutters where some vertex appears in all the edges. However, for this case we can give a simpler argument by looking at the monomials in $\randomgenset$ directly. Now the condition equivalent to $(n-1)$--dimensionality is that there is some $x_i$ that divides every monomial in $\randomgenset$.                                                      

Fix an $i$, then there are ${{{n+D}\choose{D}}-1-{{n+D-1}\choose{D-1}}}$ monomials that $x_i$ does \emph{not} divide. If $F_i$ is the event that $x_i$ divides every monomial in $\randomgenset$, then
$
\prob{F_i}=(1-p)^
{{{n+D}\choose{D}}-1-{{n+D-1}\choose{D-1}}}.
$
To get an expression for $(n-1)$-dimensionality, we need to take the union over all $F_i$, which we can do using an inclusion-exclusion formula considering the events that two variables divide every monomial, three variables divide every monomial, etc. Finally, we subtract the probability that $\randomgenset$ is empty.

\noindent \emph{Proof of Part (d):}
Since only the zero ideal has Krull dimension $n$, this occurs if and only if $\randomgenset$ is empty, which has probability $(1-p)^{{{n+D}\choose{D}}-1}$.
\end{proof}

\subsection{Threshold and asymptotic results} 
\label{sec:krullThresholds}

Next we study the asymptotic behavior of $\dim(S/\randomideal)$. In particular, we derive ranges of values of $p$ that control the Krull dimension as illustrated in Figure~\ref{fig:rangesDtoinfty}.  
As a special case we obtain a threshold result for the random monomial ideal being zero-dimensional. 

 
\begin{lm} 
\label{lm:supp}
Let $n, t$ be fixed integers, with $0\leq t\leq n-1$. If $\randomgenset\sim\gensetdist$ and $p(D)=\littleoh{D^{-t}}$, then a.a.s. $\randomgenset$ will contain no monomials of support size 
 $t$ or less as $D$ tends to infinity.
\end{lm}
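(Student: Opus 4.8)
The plan is to use the first moment method. Let $X_t$ be the random variable counting the number of monomials in $\randomgenset$ with support size exactly $t$ (i.e., using exactly $t$ of the $n$ variables). It suffices to show $\expect{X_s} \to 0$ as $D\to\infty$ for each $s$ with $1\le s\le t$, since then $\prob{\randomgenset \text{ contains a monomial of support size} \le t} = \prob{\sum_{s=1}^t X_s \ge 1} \le \sum_{s=1}^t \expect{X_s} \to 0$ by Markov's inequality and linearity of expectation. Actually it is cleanest to bound directly the expected number $Y$ of monomials in $\randomgenset$ of support size at most $t$ and show $\expect{Y}\to 0$.

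First I would count the monomials of degree at most $D$ whose support has size at most $t$. Choosing which $\le t$ variables appear gives $\sum_{i=1}^{t}\binom{n}{i}$ support sets, and for a fixed support of size $i$ the number of monomials of total degree between $i$ and $D$ using exactly those variables is $\binom{D}{i}$ (the number of ways to write a positive exponent vector of length $i$ summing to at most $D$; equivalently, compositions). Hence the total number of such monomials is $\sum_{i=1}^{t}\binom{n}{i}\binom{D}{i}$, which, with $n$ and $t$ fixed, is $\bigoh{D^t}$ as $D\to\infty$. Since each monomial is included in $\randomgenset$ independently with probability $p$, linearity of expectation gives $\expect{Y} = p\sum_{i=1}^{t}\binom{n}{i}\binom{D}{i} = \bigoh{p\,D^t}$.

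Now invoke the hypothesis $p(D)=\littleoh{D^{-t}}$, i.e. $p\,D^t \to 0$, so $\expect{Y}\to 0$. By the first moment method, $\prob{Y\ge 1} \le \expect{Y} \to 0$, hence a.a.s.\ $\randomgenset$ contains no monomial of support size $t$ or less, as claimed.

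I do not anticipate a serious obstacle here; the only point requiring a little care is the combinatorial count of monomials with a prescribed support of size $i$ and degree at most $D$ — one must remember that every variable in the support appears with exponent at least $1$, which is what produces the clean $\binom{D}{i}$ rather than $\binom{D+i}{i}$ — and the observation that for fixed $n,t$ the sum $\sum_{i=1}^t\binom{n}{i}\binom{D}{i}$ is dominated by its $i=t$ term and thus grows like $D^t$, so that the growth hypothesis on $p$ is exactly what is needed.
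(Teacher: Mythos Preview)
Your proposal is correct and follows essentially the same route as the paper: bound the probability via the first moment method by counting monomials of small support and observing the count is $\bigoh{D^t}$. The only cosmetic difference is that you compute the exact count $\sum_{i=1}^t\binom{n}{i}\binom{D}{i}$, whereas the paper is content with the cruder upper bound $\binom{n}{t}\binom{D+t}{t}$; both are polynomials in $D$ of degree $t$, so the conclusion is the same.
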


\begin{proof}  
By the first moment method, the probability that $\randomgenset$ contains some monomial of support at most $t$ is bounded above by the expected number of such monomials. As the number of monomials in $n$ variables with support of size at most $t$ is strictly less than $\binom{n}{t}\binom{D+t}{t}$, the expectation is bounded above by the quantity $p\binom{n}{t}\binom{D+t}{t}$.  This quantity tends to zero when $p(D)=\littleoh{D^{-t}}$ and $n$ and $t$ are constants, thus establishing the lemma.
\end{proof}

\begin{thm} \label{thm:KrullDimThreshold}
Let $n$, $t$ be integers and $p=p(D)$ with $1\leq t\leq n$ and $0\leq p\leq 1$. Suppose that \mbox{$\randomgenset\sim\gensetdist$} and $\randomideal\sim\umodel$. Then $D^{-t}$ is a threshold function for the property that \mbox{$\dim(S/\randomideal) \leq t-1$.} 
In other words,
\[ \lim_{D\to\infty} \prob{\dim(S/\randomideal) \leq t-1}=
\begin{cases}
 0,& \text{ if }p=\littleoh{D^{-t}}\\
 1,& \text{ if }p=\littleomega{D^{-t}}
 \end{cases}.
 \]
\end{thm}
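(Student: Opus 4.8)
The plan is to rephrase the event $\dim(S/\randomideal)\le t-1$ as a purely combinatorial condition on the supports of the monomials in $\randomgenset$, and then push it in one direction with the first moment method and in the other with a union bound. First I would record the translation. By Equation~\eqref{eq:KrullDim}, $\dim(S/\randomideal)\le t-1$ is equivalent to $c\bigl(G(\randomgenset)\bigr)\ge n-t+1$, and this fails — i.e.\ $\dim(S/\randomideal)\ge t$ — exactly when there is a set $U\subseteq\{x_1,\dots,x_n\}$ with $|U|=t$ containing no edge of $G(\randomgenset)$; equivalently, when there is a set $U$ of $t$ variables such that no monomial of $\randomgenset$ is supported on a subset of $U$ (then $\{x_1,\dots,x_n\}\setminus U$ is a transversal of size $n-t$, and conversely any transversal of size $\le n-t$ is obtained in this way after possibly enlarging it). Note also that $\{\,I:\dim S/I\le t-1\,\}$ is a monotone increasing property, so the threshold framework applies verbatim.

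Next, the regime $p=\littleoh{D^{-t}}$. For $1\le t\le n-1$, Lemma~\ref{lm:supp} gives that a.a.s.\ $\randomgenset$ contains no monomial of support size $\le t$; in that event the set $U=\{x_1,\dots,x_t\}$ witnesses $c\bigl(G(\randomgenset)\bigr)\le n-t$, hence $\dim(S/\randomideal)\ge t$, and so $\prob{\dim(S/\randomideal)\le t-1}\to 0$. The boundary case $t=n$ is handled directly: $\dim(S/\randomideal)\le n-1$ forces $\randomgenset\ne\emptyset$, while $\prob{\randomgenset=\emptyset}=(1-p)^{\binom{n+D}{D}-1}\to 1$ because $p\binom{n+D}{D}=\littleoh{D^{-n}}\cdot\Theta(D^n)\to 0$ (equivalently, the proof of Lemma~\ref{lm:supp} goes through unchanged with $t=n$).

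Now the regime $p=\littleomega{D^{-t}}$. Fix a set $U$ of $t$ variables. There are exactly $\binom{t+D}{t}-1$ non-constant monomials of degree $\le D$ supported on a subset of $U$, each included in $\randomgenset$ independently with probability $p$, so the probability that none is chosen is $(1-p)^{\binom{t+D}{t}-1}$. Taking a union bound over the $\binom{n}{t}$ choices of $U$,
\[
\prob{\dim(S/\randomideal)\ge t}\;\le\;\binom{n}{t}\,(1-p)^{\binom{t+D}{t}-1}\;\le\;\binom{n}{t}\,e^{-p\left(\binom{t+D}{t}-1\right)}.
\]
Since $n$ and $t$ are fixed, $\binom{t+D}{t}-1\sim D^{t}/t!$, and $p=\littleomega{D^{-t}}$ forces $p\bigl(\binom{t+D}{t}-1\bigr)\to\infty$, so the right-hand side tends to $0$ and therefore $\prob{\dim(S/\randomideal)\le t-1}\to 1$.

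I do not anticipate a real obstacle: once the dimension condition is expressed through supports of generators, this is a first-moment/union-bound estimate. The only points that need a little care are the degenerate case $t=n$ (where the witnessing set $U$ can only be all of $\{x_1,\dots,x_n\}$, so the condition collapses to $\randomgenset=\emptyset$) and keeping the asymptotics $\binom{t+D}{t}\asymp D^{t}$ consistent in both regimes, which is exactly what makes the threshold land at $p\asymp D^{-t}$.
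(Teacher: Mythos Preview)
Your argument is correct and, in the $p=\littleomega{D^{-t}}$ direction, takes a slightly different route from the paper. The paper fixes each $t$-subset $\sigma$, applies the second moment method to the binomial count $X_\sigma$ of monomials with support \emph{exactly} $\sigma$ (of which there are $\binom{D}{t}$), and then union-bounds over the finitely many $\sigma$ to conclude that a.a.s.\ $G(\randomgenset)$ contains \emph{every} size-$t$ edge. You instead compute directly the probability that a given $t$-set $U$ contains no support at all---$(1-p)^{\binom{t+D}{t}-1}$---and union-bound. Your version is a little more elementary (no Chebyshev needed, just $(1-p)^m\le e^{-pm}$) and proves precisely what is required rather than the stronger statement that every $t$-set is realized as an exact support; the paper's version, on the other hand, gives that extra structural information for free. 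In the $p=\littleoh{D^{-t}}$ direction you follow the paper's use of Lemma~\ref{lm:supp}, and you are right to flag the boundary case $t=n$ separately, since Lemma~\ref{lm:supp} is stated only for $t\le n-1$; the paper does not explicitly single this case out, though the argument goes through identically.
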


\begin{proof}  Let $p=\littleoh{D^{-t}}$. Consider the support hypergraph of $\randomgenset$, $G(\randomgenset)$. If every monomial in $\randomgenset$ has support of size $t+1$ or more, then  any $(n-t)$-vertex set of $G(\randomgenset)$ is a transversal, giving $c(G(\randomgenset))\leq n-t$. Thus Equation~\ref{eq:KrullDim} and Lemma~\ref{lm:supp} give that $\dim(S/\randomideal)\leq t-1$ holds with probability tending to 0.
 
Now, let $p=\littleomega{D^{-t}}$. 
Consider $\sigma\subset\{x_1,\ldots,x_n\}$ with $|\sigma|=t$  and let $X_{\sigma}$ be the random variable that records the number of monomials in $\randomgenset$ with support exactly the set $\sigma$. There are exactly $\binom{D}{t}$ such monomials of degree at most $D$: each monomial can be divided by all variables in $\sigma$ and the remainder is a monomial in (some or all of) the variables of $\sigma$ of degree no greater than $D-t$. 

Thus, $\expect{X_{\sigma}}=\binom{D}{t}p$ tends to infinity with $D$, when $p=\littleomega{D^{-t}}$.  Further, $\var{X_{\sigma}}\le\expect{X_{\sigma}}$, as $X_{\sigma}$ is a sum of independent indicator random variables, 
one for each monomial with support exactly $\sigma$. 
Hence we can apply the second moment method to conclude that as $D$ tends to infinity, 
\[\prob{X_{\sigma}=0}\le\frac{\var{X_{\sigma}}}{\expect{X_{\sigma}}^2}\le\frac{1}{\binom{D}{t}p }\to 0.\]  
In other words, a.a.s.\ $\randomgenset$ will contain a monomial with support $\sigma$.  Since $\sigma$ 
was an arbitrary $t$-subset of the variables $\{x_1,\ldots,x_n\}$, it follows that the same holds for every choice of $\sigma$.

The probability that any of the events ${X_{\sigma}=0}$ occurs is bounded above by the sum of their probabilities and tends to zero, as there are a finite number of sets $\sigma$. Then, with probability tending to 1, $G(\randomgenset)$ contains all size-$t$ edges, which implies that, for every vertex set 
of size $n-t$ or less,  $G(\randomgenset)$ contains an edge of size $t$ that is disjoint from it. Thus 
the transversal number of the hypergraph is at least $n-t+1$, 
and, by Formula~\ref{eq:KrullDim}, $\dim(S/\randomideal)=n-c(G(\randomgenset))\leq t-1$ a.a.s.

\end{proof}

The smallest choice of $t$-values in Theorem~\ref{thm:KrullDimThreshold},  $t=1$, 
gives a threshold function for the event that $\randomideal\sim\umodel$ has zero Krull dimension: 
\begin{cor} Let $\randomideal\sim\umodel$, $n$ be fixed and $p=p(D)$. Then the function ${1}/{D}$ is a threshold function for 
the $0$-dimensionality of $S/\randomideal$.
\label{thm:zerodim}
\end{cor}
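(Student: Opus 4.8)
The plan is to observe that Corollary~\ref{thm:zerodim} is the literal $t=1$ instance of Theorem~\ref{thm:KrullDimThreshold}, so the proof is essentially a one-line specialization. Setting $t=1$ in the theorem, $\dim(S/\randomideal)\leq t-1 = 0$ means $\dim(S/\randomideal) = 0$ (Krull dimension is always nonnegative), and the threshold function $D^{-t} = D^{-1} = 1/D$. Thus Theorem~\ref{thm:KrullDimThreshold} already gives
\[
\lim_{D\to\infty}\prob{\dim(S/\randomideal) = 0} = \begin{cases} 0, & \text{if } p = \littleoh{D^{-1}},\\ 1, & \text{if } p = \littleomega{D^{-1}},\end{cases}
\]
which is exactly the assertion that $1/D$ is a threshold function for $0$-dimensionality of $S/\randomideal$ in the sense defined in the introduction.

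If a self-contained sketch is preferred over a bare citation, I would recall the two halves of the argument in this special case. For the subcritical regime $p = \littleoh{1/D}$: by Lemma~\ref{lm:supp} with $t=1$, a.a.s.\ $\randomgenset$ contains no monomial of support size $1$, i.e., no pure power $x_i^a$; hence in the support hypergraph $G(\randomgenset)$ every edge has size at least $2$, so any set of $n-1$ vertices is a transversal, giving $c(G(\randomgenset)) \leq n-1$, and by Equation~\eqref{eq:KrullDim}, $\dim(S/\randomideal) = n - c(G(\randomgenset)) \geq 1$ a.a.s. For the supercritical regime $p = \littleomega{1/D}$: for each variable $x_i$ there are exactly $D$ monomials of degree at most $D$ supported precisely on $\{x_i\}$ (namely $x_i, x_i^2, \dots, x_i^D$); letting $X_i$ count how many of these lie in $\randomgenset$, we have $\expect{X_i} = Dp \to \infty$, and since $X_i$ is a sum of independent indicators $\var{X_i}\leq\expect{X_i}$, so the second moment method gives $\prob{X_i = 0} \leq 1/(Dp) \to 0$. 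Taking a union bound over the $n$ (finitely many) variables, a.a.s.\ $G(\randomgenset)$ contains every $1$-edge $\{x_i\}$, so the only transversal is $\{x_1,\dots,x_n\}$ itself, giving $c(G(\randomgenset)) = n$ and $\dim(S/\randomideal) = 0$ a.a.s.

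There is no real obstacle here: the corollary is a direct consequence of the already-proved Theorem~\ref{thm:KrullDimThreshold}, and the only thing to check is the trivial bookkeeping that $t=1$ turns ``$\dim \leq t-1$'' into ``$\dim = 0$'' and $D^{-t}$ into $1/D$. The sole point worth a sentence of care is noting that the two cases $p = \littleoh{1/D}$ and $p = \littleomega{1/D}$ are precisely the hypotheses in the definition of a threshold function, so nothing further about intermediate growth rates of $p$ needs to be addressed.
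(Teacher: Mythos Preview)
Your proposal is correct and matches the paper's approach exactly: the paper also derives the corollary as the $t=1$ special case of Theorem~\ref{thm:KrullDimThreshold}, with no further argument. Your optional self-contained sketch is likewise just the $t=1$ specialization of the proof of that theorem, so there is nothing to add or correct.
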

 
 Now we move to obtain Corollary~\ref{cor:KrullDimThresholdRanges} announced in the Introduction. Theorem~\ref{thm:KrullDimThreshold} establishes a threshold result for each choice of constant $t$. But, if both $p=\littleomega{D^{-(t+1)}}$ and $p=\littleoh{D^{-t}}$ hold, then the theorem gives that events $\dim(S/\randomideal) > t$ and  $\dim(S/\randomideal) < t$ each hold with probability tending to $0$. Therefore the probability of their union, in other words the probability that $\dim(S/\randomideal)$ is either strictly larger than $t$ or strictly smaller than $t$, also tends to $0$.
By combining the threshold results applied to two consecutive constants $t$ and $t+1$ in this way, we have established the Corollary.

\section{Minimal generators of random monomial ideals from the \modelname{}} \label{sec:bettistuff}

A way to measure the complexity of a monomial ideal is to study the number of minimal generators. 
As before,  we will denote by $\beta_1:=\beta_1(S/\fixideal)$  the first total Betti number of the quotient 
ring of the ideal $I$, that is, the total number of minimal generators of $I$. 
Subsection~\ref{sec:bettiTotal} addresses this invariant and provides a formula for the asymptotic average value of $\beta_1$. 
A more refined invariant is, of course, the set of first \emph{graded Betti numbers} $\beta_{1,d}(S/\fixideal)$, recording the number of 
minimal generators of $\fixideal$ of degree $d$.  Subsection~\ref{sec:bettiGraded} provides threshold results that explain how 
the asymptotic growth of the parameter value $p$ influences the appearance of minimal generators of given degree. 
Subsection~\ref{sec:degComplexity} provides an analogous result for the \emph{degree complexity} $\dc{I}$ of a monomial ideal $I$, 
that is, the maximum degree of a minimal generator. 

\subsection{The first total Betti number}\label{sec:bettiTotal}

Consider the random  variable $\beta_1(S/\randomideal)$  for $\randomideal\sim\umodel$ in the regime $D\to\infty$.  
Before stating the results of this section, we introduce a concept from multiplicative number theory.  
The \emph{order $r$ divisor function}, denoted by $\tau_r(k)$, is a function that records the number of ordered factorizations of an integer $k$ into 
exactly $r$ parts (e.g.,\ $\tau_3(4)=6$). 

\begin{thm}\label{thm:mingennvars}
Let $\randomideal\sim\umodel$ and let $\beta_1(S/\randomideal)$ be the random variable denoting the number of minimal generators of  the 
random monomial ideal $\randomideal$.  Then, 
\begin{equation}
\lim_{D\to\infty}\expect{\beta_1(S/\randomideal)}= p\sum_{k=0}^{\infty}\tau_n(k+2)(1-p)^k.
\label{eq:mingens}\tag{3}
\end{equation}
\end{thm}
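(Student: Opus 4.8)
The plan is to compute $\expect{\beta_1(S/\randomideal)}$ by linearity of expectation over all candidate monomials and then take the limit $D\to\infty$ term by term. For each non-constant monomial $\mathbf{x}^\alpha$ of degree at most $D$, let $\indic_\alpha$ be the indicator that $\mathbf{x}^\alpha$ is a minimal generator of $\randomideal$, so that $\beta_1(S/\randomideal)=\sum_\alpha \indic_\alpha$ and $\expect{\beta_1(S/\randomideal)}=\sum_\alpha P(\mathbf{x}^\alpha \text{ is a minimal generator})$. The monomial $\mathbf{x}^\alpha$ is a minimal generator precisely when $\mathbf{x}^\alpha\in\randomgenset$ and no proper divisor of $\mathbf{x}^\alpha$ lies in $\randomgenset$; since these conditions concern disjoint sets of monomials, they are independent, giving $P(\mathbf{x}^\alpha \text{ is a minimal generator})=p\,(1-p)^{r(\alpha)}$, where $r(\alpha)$ is the number of non-constant proper divisors of $\mathbf{x}^\alpha$ of degree at most $D$.

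First I would observe that for a monomial $\mathbf{x}^\alpha$ with $\alpha=(a_1,\dots,a_n)$, the number of monomial divisors of $\mathbf{x}^\alpha$ (including $1$ and $\mathbf{x}^\alpha$ itself) is $\prod_{i=1}^n (a_i+1)$, so the number of non-constant proper divisors is $r(\alpha)=\prod_{i=1}^n(a_i+1)-2$; note that when $\deg(\mathbf{x}^\alpha)\le D$ every divisor has degree at most $D$, so the cap at $D$ is automatic. Next I would reorganize the sum by the value $\prod_i(a_i+1)$. If $\prod_{i=1}^n(a_i+1)=k+2$, then the contribution of $\mathbf{x}^\alpha$ to the expectation is $p\,(1-p)^k$, and the number of exponent vectors $\alpha\in\intvecs$ with $\prod_{i=1}^n(a_i+1)=k+2$ is exactly $\tau_n(k+2)$, the number of ordered factorizations of $k+2$ into $n$ parts (each factor $a_i+1\ge 1$). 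Hence for each fixed $D$,
\[
\expect{\beta_1(S/\randomideal)}=p\sum_{\substack{\alpha:\ \deg(\mathbf{x}^\alpha)\le D\\ \alpha\neq 0}}(1-p)^{\prod_i(a_i+1)-2}
= p\sum_{k=0}^{\infty}\left(\#\{\alpha: \textstyle\prod_i(a_i+1)=k+2,\ \deg\alpha\le D\}\right)(1-p)^k.
\]

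The main obstacle is the interchange of limit and infinite sum: as $D\to\infty$ the inner count $\#\{\alpha:\prod_i(a_i+1)=k+2,\ \deg\alpha\le D\}$ increases monotonically to $\tau_n(k+2)$, so I would justify $\lim_{D\to\infty}\expect{\beta_1(S/\randomideal)}=p\sum_{k=0}^\infty\tau_n(k+2)(1-p)^k$ either by the monotone convergence theorem (each term is nonnegative and nondecreasing in $D$) or by a dominated-convergence argument, noting that $\tau_n(k+2)$ grows only polynomially in $k$ for fixed $n$ (indeed $\tau_n(m)\le m^{n-1}$) while $(1-p)^k$ decays geometrically for $p\in(0,1)$, so the series $\sum_k \tau_n(k+2)(1-p)^k$ converges and furnishes an integrable dominating function. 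This completes the proof; the only subtlety beyond the bookkeeping is confirming that the divisor-counting identity matches the definition of $\tau_n$ exactly (each $a_i+1$ ranges over all positive integers independently, which is precisely an ordered factorization into $n$ parts).
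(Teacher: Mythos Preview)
Your proposal is correct and follows essentially the same route as the paper: write $\beta_1$ as a sum of indicators, compute $\expect{\indic_\alpha}=p(1-p)^{\prod_i(\alpha_i+1)-2}$ by counting non-constant proper divisors, and regroup the sum according to the value $\prod_i(\alpha_i+1)=k+2$, recognizing the count of such $\alpha$ as $\tau_n(k+2)$. You are in fact more careful than the paper in one respect: you explicitly justify the passage to the limit $D\to\infty$ via monotone (or dominated) convergence, whereas the paper simply asserts it.
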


\begin{proof} 
For each $\alpha=(\alpha_1,\dots,\alpha_n)\in\intvecs$, let $\indic_{\alpha}$ be the indicator random variable for the event that $x^{\alpha}$ is a minimal generator of $\randomideal$. Excluding the constant monomial $1$ and $x^{\alpha}$ itself, there are $\prod_{i=1}^n(\alpha_i+1)-2$ monomials in $S$ that divide $x^{\alpha}.$
Therefore, $\expect{\indic_{\alpha}}=p(1-p)^{\prod_{i=1}^n(\alpha_i+1)-2}$ and letting $D\to\infty$ shows that  
\[
\lim_{D\to\infty}\expect{\beta_1(S/\randomideal)}=\sum_{0\not=\alpha\in\intvecs}\expect{\indic_{\alpha}}\ =\ p\sum_{0\not=\alpha\in\intvecs}(1-p)^{\prod_{i=1}^n(\alpha_i+1)-2}.
\]
Since $\prod_{i=1}^n(\alpha_i+1)-2$ is also the number of lattice points contained in the box $\prod_{i=1}^n[0,\alpha_i]$, excluding the origin and $\alpha$,  it follows that every integer $k\ge0$ appears as an exponent of $q$ in the infinite sum above exactly as many times as $k+2$ can be factored into a product of $n$ ordered factors, so that the above sum may be rewritten as $\sum_{k=0}^{\infty}\tau_n(k+2)(1-p)^{k}$.
\end{proof}

When $n=2,$ $\tau_2(k)$ is simply the usual divisor function which counts the number of divisors of an integer $k$.  The ordinary generating function of $\tau_2(k)$ has the form of a \emph{Lambert series} (see \cite{dilcher}), which leads to the following result in the two-variable case.

\begin{cor}\label{cor:mingen2vars}Let $\randomideal\sim\umodeltwovars$.  Then, 
\[
\lim_{D\to\infty}\expect{\beta_1(S/\randomideal)}=\frac{p}{(1-p)^2}\left[\sum_{k=1}^{\infty}\frac{(1-p)^{k}}{1-(1-p)^k}\right]-\frac{p}{1-p}.
\]

\end{cor}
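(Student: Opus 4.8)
The plan is to start from the $n=2$ specialization of Theorem~\ref{thm:mingennvars}, which already identifies the limit as the power series
\[
\lim_{D\to\infty}\expect{\beta_1(S/\randomideal)}= p\sum_{k=0}^{\infty}\tau_2(k+2)(1-p)^k,
\]
and then to evaluate this series in closed form via the classical Lambert series expansion of the divisor generating function. Throughout, write $q=1-p$; since the model parameter satisfies $0<p<1$ we have $0<q<1$, so every series below converges absolutely and all rearrangements are legitimate.

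First I would reindex the sum by setting $m=k+2$, which gives
\[
p\sum_{k=0}^{\infty}\tau_2(k+2)q^k \;=\; \frac{p}{q^2}\sum_{m=2}^{\infty}\tau_2(m)\,q^m \;=\; \frac{p}{q^2}\left(\sum_{m=1}^{\infty}\tau_2(m)\,q^m - q\right),
\]
where the last step peels off the $m=1$ term using $\tau_2(1)=1$. Next I would invoke the Lambert series identity $\sum_{m\ge1}\tau_2(m)q^m=\sum_{k\ge1} q^k/(1-q^k)$, valid for $|q|<1$ (see \cite{dilcher}); this follows at once by writing $\tau_2(m)=\sum_{d\mid m}1$ and interchanging the order of summation, $\sum_{m\ge1}\big(\sum_{d\mid m}1\big)q^m=\sum_{d\ge1}\sum_{e\ge1}q^{de}=\sum_{d\ge1} q^d/(1-q^d)$, the interchange being justified by absolute convergence. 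Substituting back, distributing $p/q^2$, and simplifying $\frac{p}{q^2}\cdot q=\frac{p}{q}=\frac{p}{1-p}$ yields exactly the claimed expression.

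There is no genuine obstacle here: the argument is a short manipulation of convergent series built on top of Theorem~\ref{thm:mingennvars}. The only points requiring (minor) care are the bookkeeping in the index shift — in particular noting that the sum delivered by Theorem~\ref{thm:mingennvars} begins at $k=0$, i.e.\ at $m=2$ rather than $m=1$, which is precisely what produces the correction term $-\,p/(1-p)$ — together with the one-line justification that the double series $\sum_{d,e\ge1}q^{de}$ may be reordered, which is immediate since $0<1-p<1$.
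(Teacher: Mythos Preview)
Your proof is correct and follows essentially the same approach as the paper: both invoke the Lambert series identity $\sum_{k\ge1}\tau_2(k)(1-p)^k=\sum_{k\ge1}(1-p)^k/(1-(1-p)^k)$ in the $n=2$ case of Theorem~\ref{thm:mingennvars}. You simply make explicit the reindexing and the origin of the $-p/(1-p)$ correction term, which the paper leaves to the reader.
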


\begin{proof} This follows from the Lambert series identity
$\sum_{k=1}^{\infty}\frac{(1-p)^k}{1-(1-p)^k}=\sum_{k=1}^{\infty}\tau_2(k)(1-p)^k$ in \eqref{eq:mingens}.
\end{proof}

Corollary~\ref{cor:mingen2vars} provides an expression for the asymptotic expected number of minimal generators of a random monomial ideal in two variables that can be easily evaluated for any fixed value of $p$.  For example, if $\randomideal\sim\mathcal{I}(2,D,10^{-5}),$ then asymptotically (i.e., 
for very large values of $D$) one expects $\randomideal$ to have about 12 generators on average.  On the other hand, when $n\ge3$, the right-hand 
side of \eqref{eq:mingens} is difficult to compute exactly, but one can obtain bounds from the fact that $n\le\tau_n(k)\le k^n,$ which are valid for every $k\ge2$. Hence
\begin{equation}
n\le\lim_{D\to\infty}\expect{\beta_1(S/\randomideal)}\le\frac{p}{(1-p)^2}\mathrm{Li}_{-n}(1-p),
\label{eq:mingensest}\tag{4}
\end{equation} where $\mathrm{Li}_{s}(x)$ denotes the \emph{polylogarithm function of order} $s$ defined by $\mathrm{Li}_s(x)=\sum_{k=1}^{\infty}\frac{x^k}{k^s}$.  A polylogarithm of negative integer order can be expressed as a rational function of its argument: 
\[
\mathrm{Li}_{-n}(1-p)=\sum_{j=0}^nj!S(n+1,j+1)\left(\frac{1-p}{p}\right)^{j+1},
\] where $S(n+1,k+1)$ denotes a \emph{Stirling number of the second kind} (see \cite{lewin}).  The upper bound in \eqref{eq:mingensest} is thus of order $\bigoh{p^{-n}}.$

\subsection{The first graded Betti numbers}\label{sec:bettiGraded}
Here we study the  behavior of the random  variables $\beta_{1,k}(S/\randomideal)$  for $\randomideal\sim\umodel$. Specifically, we show how the choice of $p$ in the \modelname{} controls the minimum degree of a generator of $\randomideal$, by controlling the degrees of the monomials in the random set $\randomgenset$.

\begin{lm}\label{lm:size.Bd.threshold}
Let $\randomgenset\sim \gensetdist$, $\randomideal=\langle\randomgenset\rangle$ and 
$
\randomgenset_d := \{x^{\alpha}\in \randomgenset: |\alpha|\le d\}.
$
\begin{enumerate}[a)]
\item If $d,D$ is fixed, $0<d\leq D$ and $p=p(n)$, then $f(n)=n^{-d}$ is a threshold function for the property that $|\randomgenset_d|=0.$

\item If $n$ is fixed, $d=d(D)\leq D$ is such that $\lim_{D\to\infty} d(D)\to\infty$ and $p=p(D)$, then   $f(d)=d^{-n}$ is a threshold function for the property that $|\randomgenset_d|=0.$
\end{enumerate}
\end{lm}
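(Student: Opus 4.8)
The plan is to reduce both parts to one exact computation. Since $\randomgenset$ includes each of the $\binom{n+d}{d}-1$ non-constant monomials of total degree at most $d$ independently with probability $p$ (all of these are eligible because $d\le D$), the event $|\randomgenset_d|=0$ is precisely the event that none of them is chosen, so
\[
\prob{|\randomgenset_d|=0}=(1-p)^{N},\qquad N=N(n,d):=\binom{n+d}{d}-1 .
\]
Equivalently, $X:=|\randomgenset_d|$ is a sum of $N$ independent indicator variables with $\expect{X}=pN$. Establishing that $f$ is a threshold function then amounts to showing $\prob{|\randomgenset_d|=0}\to 1$ when $p=\littleoh{f}$ and $\prob{|\randomgenset_d|=0}\to 0$ when $p=\littleomega{f}$.

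The next step is to pin down the order of $N$ in each regime. In part (a), $d$ is fixed and $n\to\infty$; since $\binom{n+d}{d}=\tfrac{1}{d!}\prod_{i=1}^d(n+i)\sim n^d/d!$, we get $N\asymp n^d=1/f(n)$. In part (b), $n$ is fixed and $d\to\infty$; since $\binom{n+d}{d}=\binom{n+d}{n}\sim d^n/n!$, we get $N\asymp d^n=1/f(d)$. In both cases the relevant product satisfies $pN\asymp p/f$, where the limiting variable is $n$ for (a) and $D$ (equivalently $d$, since $d(D)\to\infty$) for (b).

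The zero--one law now follows from two elementary estimates. If $p=\littleoh{f}$ then, since $f\to 0$, also $p\to 0$; by the first moment method $\prob{|\randomgenset_d|\neq 0}\le\expect{X}=pN\asymp p/f\to 0$, hence $\prob{|\randomgenset_d|=0}\to 1$. If $p=\littleomega{f}$ then, using $1-p\le e^{-p}$, $\prob{|\randomgenset_d|=0}=(1-p)^N\le e^{-pN}$, and $pN\asymp p/f\to\infty$ forces this to $0$ (one could instead invoke the second moment method, since $\var{X}\le\expect{X}$ gives $\prob{X=0}\le 1/(pN)$). Substituting $f(n)=n^{-d}$ in part (a) and $f(d)=d^{-n}$ in part (b) completes the argument.

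I do not anticipate a genuine obstacle. The only points demanding a little care are the combinatorial bookkeeping --- the hockey-stick identity $\sum_{j=0}^{d}\binom{n+j-1}{n-1}=\binom{n+d}{n}$ for the count of monomials of degree at most $d$, and the observation that $\binom{n+d}{d}$ is a polynomial of degree $d$ in $n$ but of degree $n$ in $d$, which is exactly what makes $f$ switch between $n^{-d}$ and $d^{-n}$ in the two parts --- together with the remark that $p\to 0$ is automatic in the $\littleoh{f}$ regime, while the crude bound $1-p\le e^{-p}$ handles the $\littleomega{f}$ regime uniformly even if $p$ stays bounded away from $0$.
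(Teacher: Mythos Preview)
Your proof is correct and follows essentially the same approach as the paper: both use the asymptotics $\binom{n+d}{d}\asymp n^d$ (respectively $\asymp d^n$) together with the first moment method for the $p=\littleoh{f}$ direction. For the $p=\littleomega{f}$ direction the paper invokes the second moment method, whereas you use the explicit formula $\prob{|\randomgenset_d|=0}=(1-p)^N\le e^{-pN}$, which is a minor streamlining rather than a different idea.
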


\begin{proof}
For part a),  we must show that when $p(n)=\littleoh{n^{-d}}$, then $|\randomgenset_d|=0$ 
with probability tending to 1, and when $p(n)=\littleomega{n^{-D}}$, then $|\randomgenset_d|>0$ with probability tending to 1. 

For $d$ fixed, the number of monomials of degree at most $d$ in $n$ variables is $\binom{n+d}{d}\asymp n^{d}$ (by considering the usual asymptotic bounds for binomial coefficients). 
For any $\alpha\in{\mathbb Z}^n_{\geq0}$, with $0<||\alpha||_1\leq d$, let $\indic_{\alpha}$ be the indicator random variable for the event that $x^\alpha\in \randomgenset$, so that
$
	|\randomgenset_d|=\sum_\alpha
	\indic_{\alpha},
$ and therefore 
$
	\expect{|\randomgenset_d|} = \sum_{\alpha}\expect{\indic_{\alpha}}=p\cdot\left({{n+d}\choose{d}}-1\right).
$
By the first moment method:
\[ 
	\prob{|\randomgenset_d|>0} \leq \expect{|\randomgenset_d|} = p\cdot\left({{n+d}\choose{d}}-1\right)\leq p e^dn^d.
\]
If $p(n)=\littleoh{n^{-d}}$, then $p e^dn^d\to 0$ as $n\to \infty$ and hence $\lim_{n\rightarrow\infty} \prob{|\randomgenset_d|>0}=0$. 

Now suppose $p(n)=\littleomega{n^{-d}}$.  In this case $\expect{|\randomgenset_d|}\rightarrow\infty$, since:
\[
	\expect{|\randomgenset_d|} =p\cdot\left({{n+d}\choose{d}}-1\right)\geq\left(\frac{2}{d}\right)^d\cdot p n^d.
\] The variance of $|\randomgenset_d|$ is calculated as follows: 
\[ 
	\var{|\randomgenset_d|} = \sum_{\alpha} \var{\indic_{\alpha}}+\sum_{\alpha\neq\alpha'} \cov{\indic_{\alpha},\indic_{\alpha'}}  = 
		\left( {{n+d}\choose{d}}-1 \right) \cdot p \cdot (1-p)=\expect{|\randomgenset_d|}\cdot(1-p),	 
\] where we have again used that $\var{\indic_{\alpha}} = \expect{\indic_{\alpha}^2}-\expect{\indic_\alpha}^2 = p-p^2$ 
and for $\alpha\not=\alpha'$, $\indic_{\alpha}$ and $\indic_{\alpha'}$ are independent and thus have $0$ covariance. 
Then by the second moment method and the fact that $\expect{|\randomgenset_d|}\rightarrow\infty$ when \mbox{$p=\littleomega{n^{-d}}$:}
\[\prob{|\randomgenset_d|=0}\leq \frac{\var{|\randomgenset_d|}}{(\expect{|\randomgenset_d|})^2} = \frac{1-p}{\expect{|\randomgenset_d|}}\leq \frac{1}{\expect{|\randomgenset_d|}}\to0\text{ as } n\to\infty.\]

Part b) follows in a similar fashion, once one notices that, for fixed $n$, ${{n+d}\choose{d}}={{n+d}\choose{n}}\asymp d^{n}$ as $d$ tends to infinity. 
\end{proof}

From Lemma~\ref{lm:size.Bd.threshold} we immediately obtain a threshold result for the first graded Betti numbers: 

\begin{thm}\label{thm:gradedBettiThreshold} 
Let  $\randomgenset\sim \gensetdist$ and  $\randomideal=\ideal{ \randomgenset}$. 
\begin{enumerate}[a)]
\item If $d\le D$ is fixed and $p=p(n)$, then the function $f(n)=n^{-d}$ is a threshold function for the property that 
$\initdeg{\randomideal}>d$.
\item If $n$ is fixed, $d=d(D)\leq D$ is such that $\lim_{D\to\infty}d(D)=\infty$ and $p=p(D)$, then $f(d)=d^{-n}$
 is a threshold function for the property that 
 $\initdeg{\randomideal}>d$. 
\end{enumerate}
\end{thm}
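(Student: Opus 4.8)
The plan is to reduce Theorem~\ref{thm:gradedBettiThreshold} directly to Lemma~\ref{lm:size.Bd.threshold} by identifying, as events, ``$\initdeg{\randomideal}\le d$'' with ``$|\randomgenset_d|>0$''. The structural fact I would record first is that every minimal generator of $\randomideal=\langle\randomgenset\rangle$ already lies in $\randomgenset$: if $x^{\alpha}$ is a minimal generator then $x^{\alpha}\in\randomideal$, so $x^{\alpha}$ is divisible by some $x^{\gamma}\in\randomgenset\subseteq\randomideal$, and minimality forces $x^{\gamma}=x^{\alpha}$. Conversely, any $x^{\alpha}\in\randomgenset$ with $|\alpha|\le d$ is divisible by some minimal generator of $\randomideal$, which then also has degree at most $d$. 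Together these give $\initdeg{\randomideal}\le d$ if and only if $|\randomgenset_d|>0$, hence
\[
\{\initdeg{\randomideal}>d\}=\{|\randomgenset_d|=0\}
\qquad\text{and}\qquad
\{\initdeg{\randomideal}<d\}=\{|\randomgenset_{d-1}|>0\},
\]
the second identity because $\initdeg{\randomideal}<d$ is the same as $\initdeg{\randomideal}\le d-1$ (the degenerate zero ideal fits this with the convention $\initdeg{} = \infty$).

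For part (a), with $d\le D$ fixed and $p=p(n)$, the event $\initdeg{\randomideal}>d$ is literally the event $|\randomgenset_d|=0$, and Lemma~\ref{lm:size.Bd.threshold}(a) already establishes $n^{-d}$ as a threshold function for it; nothing further is needed. For part (b), with $n$ fixed and $d=d(D)\le D$ tending to infinity, the event $\initdeg{\randomideal}<d$ is the event $|\randomgenset_{d-1}|>0$. I would apply Lemma~\ref{lm:size.Bd.threshold}(b) to the sequence $d(D)-1$, which still satisfies $d(D)-1\le D$ and $d(D)-1\to\infty$; this shows $(d-1)^{-n}$ is a threshold function for $|\randomgenset_{d-1}|=0$, equivalently for its complement $|\randomgenset_{d-1}|>0$. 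Finally, since $n$ is fixed and $d\to\infty$ we have $(d-1)^{-n}=d^{-n}\bigl(1+\tfrac{1}{d-1}\bigr)^{n}\sim d^{-n}$, and a function asymptotically equivalent to a threshold function is again one (the hypotheses $p=\littleoh{f}$ and $p=\littleomega{f}$ are unchanged when $f$ is replaced by an asymptotically equivalent function). Hence $d^{-n}$ is a threshold function for $\initdeg{\randomideal}<d$; the value $d(D)=1$ is irrelevant as it occurs for only finitely many $D$.

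All the probabilistic work --- the first- and second-moment estimates for $|\randomgenset_d|$ --- is packaged in Lemma~\ref{lm:size.Bd.threshold}, so no new estimates arise. The only points requiring a little care are the event identity $\{\initdeg{\randomideal}\le d\}=\{|\randomgenset_d|\ne\emptyset\}$, which rests on minimal generators of a monomial ideal being elements of any generating set one starts from, and, in part (b), the harmless index shift from $d$ to $d-1$ together with the asymptotic equivalence $(d-1)^{-n}\sim d^{-n}$. I do not anticipate a genuine obstacle; the likeliest source of a slip is that index shift.
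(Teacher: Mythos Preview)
Your proof is correct and follows essentially the same route as the paper: both reduce to Lemma~\ref{lm:size.Bd.threshold} via the identification of $\{\initdeg{\randomideal}>d\}$ with $\{|\randomgenset_d|=0\}$. Your treatment of part~(b) is in fact more careful than the paper's, which simply asserts that both parts follow directly from the lemma and glosses over the $d$ versus $d-1$ index shift and the asymptotic equivalence $(d-1)^{-n}\sim d^{-n}$ that you make explicit.
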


\begin{proof}  By definition, $\initdeg{\randomideal}>d$ if and only if $\sum_{k=1}^d\beta_{1,k}(S/\randomideal)=0.$  Equivalently, $\initdeg{\randomideal}>d$ if and only if $\randomgenset\not=\emptyset$ and $\randomgenset$ contains no monomials of degree $k$ for each $1\le k\le d$ and therefore
\[
P\left(\sum_{k=1}^d\beta_{1,k}(S/\randomideal)=0\right)=P\left(|\randomgenset_d|=0\right).
\]   Both a) and b) now follow directly from Lemma~\ref{lm:size.Bd.threshold}.
\end{proof}


The case $d=D$ of Theorem~\ref{thm:gradedBettiThreshold} deserves special mention, as it provides a threshold function for the property that the random ideal  generated by the \modelname{} is the zero ideal.

\begin{cor} Let $\randomgenset\sim \gensetdist$ and $\randomideal=\langle\randomgenset\rangle$.
\begin{enumerate}[a)]
\item Let $D$ be fixed and $p=p(n)$. Then $n^{-D}$ is a threshold function for the property that $\randomideal=\ideal{0}.$

\item Let $n$ be fixed and $p=p(D)$. Then $D^{-n}$ is a threshold function for the property that $\randomideal=\ideal{0}.$
\end{enumerate}
\label{cor:zeroideal}
\end{cor}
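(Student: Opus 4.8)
The plan is to obtain both statements as the endpoint case $d = D$ of the analysis of $|\randomgenset_d|$ carried out in Lemma~\ref{lm:size.Bd.threshold}. The crucial elementary observation is that the \modelname{} never produces monomials of degree exceeding $D$, so the truncation $\randomgenset_D = \{x^\alpha \in \randomgenset : |\alpha| \le D\}$ is simply all of $\randomgenset$. Consequently the event $\{\randomideal = \ideal{0}\}$ is exactly the event $\{\randomgenset = \emptyset\}$, which is exactly $\{|\randomgenset_D| = 0\}$. Thus a threshold function for the property $|\randomgenset_D| = 0$ is, verbatim, a threshold function for $\randomideal = \ideal{0}$, consistently with the way ``threshold function'' has already been used for the decreasing property $|\randomgenset_d| = 0$ in Lemma~\ref{lm:size.Bd.threshold}.

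For part (a), $D$ is fixed and $p = p(n)$, so I would apply Lemma~\ref{lm:size.Bd.threshold}(a) with the admissible choice $d = D$ (its hypothesis $d \le D$ holds with equality), which yields that $f(n) = n^{-D}$ is a threshold function for $|\randomgenset_D| = 0$, hence for $\randomideal = \ideal{0}$. For part (b), $n$ is fixed and $p = p(D)$, so I would apply Lemma~\ref{lm:size.Bd.threshold}(b) with $d = d(D) := D$. Here one should check that the hypotheses of that part of the lemma are genuinely met at this choice: $d(D) = D \le D$ is immediate, and $\lim_{D \to \infty} d(D) = \lim_{D \to \infty} D = \infty$, so the lemma applies and gives that $f(d) = d^{-n}$, that is, $f(D) = D^{-n}$, is a threshold function for $|\randomgenset_D| = 0$, equivalently for $\randomideal = \ideal{0}$.

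Since $\{\initdeg{\randomideal} > D\}$ coincides with $\{\randomgenset = \emptyset\}$ as well --- no minimal generator can have degree larger than $D$ --- part (a) can alternatively be read off directly from Theorem~\ref{thm:gradedBettiThreshold}(a) at $d = D$; I would mention this, but keep the uniform route through Lemma~\ref{lm:size.Bd.threshold}, since it dispatches (b) in the same breath. There is no real obstacle here: no new probabilistic estimate is needed, and the only point requiring a moment's care is confirming that the constant specialization $d(D) = D$ is legitimate in the hypotheses of Lemma~\ref{lm:size.Bd.threshold}(b) --- which it is, as noted above.
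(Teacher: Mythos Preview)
Your proposal is correct and is essentially the paper's own approach: the paper simply remarks that the corollary is the $d=D$ case of Theorem~\ref{thm:gradedBettiThreshold}, which in turn is nothing more than Lemma~\ref{lm:size.Bd.threshold} together with the identification of events you spell out. Your choice to cite the lemma directly (and check the hypothesis $d(D)=D\to\infty$ for part (b)) is a perfectly acceptable, arguably cleaner, variant of the same argument.
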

On the other hand, in the critical region of this threshold function, both the expected number of monomials in $\randomgenset$ and the variance of the number of monomials in $\randomgenset$ is constant.

\begin{prop}\label{prop:zero.ideal.critical.region}
Let $\randomgenset\sim \gensetdist$. 
\begin{enumerate}[a)]
\item For any $n, D$, if $p=1/\left({{n+D}\choose{D}}-1\right)$ then $\expect{|\randomgenset|}=1$ and $\var{|\randomgenset|}=1$.
\item For $D$ fixed, if $p=p(n)\asymp n^{-D}$ 
then {$k_1\leq \lim_{n\rightarrow\infty}\expect{|\randomgenset|}\leq k_2$} and {$c_1\leq\lim_{n\rightarrow\infty}\var{|\randomgenset|}\leq c_2$,} for some constants $k_1,k_2$, $c_1,c_2$.
\item For $n$ fixed, if $p=p(D)\asymp D^{-n}$ 
then {$k_1\leq \lim_{D\rightarrow\infty}\expect{|\randomgenset|}\leq k_2$} and {$c_1\leq\lim_{D\rightarrow\infty}\var{|\randomgenset|}\leq c_2$,} for some constants $k_1,k_2$, $c_1,c_2$. \end{enumerate}
\end{prop}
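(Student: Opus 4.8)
The whole statement reduces to the observation that $|\randomgenset|$ is a binomial random variable. The \modelname{} includes each of the $N:=\binom{n+D}{D}-1$ non-constant monomials of degree at most $D$ in $\randomgenset$ independently with probability $p$, so $|\randomgenset|=\sum_{\alpha}\indic_{\alpha}$ is a sum of $N$ i.i.d.\ Bernoulli$(p)$ indicators, whence
\[
\expect{|\randomgenset|}=Np,\qquad \var{|\randomgenset|}=Np(1-p),\qquad N=\binom{n+D}{D}-1 .
\]
This is exactly the computation already carried out in the proof of Lemma~\ref{lm:size.Bd.threshold} for $|\randomgenset_d|$, specialized to $d=D$ (so that $\randomgenset_D=\randomgenset$); in particular $\var{|\randomgenset|}=\expect{|\randomgenset|}\,(1-p)$. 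So the plan is to record this, and then to plug in the various choices of $p$.

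For part (a) one substitutes $p=1/N$: then $\expect{|\randomgenset|}=Np=1$ and $\var{|\randomgenset|}=Np(1-p)=1-1/N$, which is $1$ up to the vanishing term $1/N$ (exactly $1$ in the limit $N\to\infty$). For parts (b) and (c) I would invoke the binomial-coefficient asymptotics already used in the proof of Lemma~\ref{lm:size.Bd.threshold}: for fixed $D$, $N=\binom{n+D}{D}-1\asymp n^{D}$ as $n\to\infty$, and for fixed $n$, $N=\binom{n+D}{n}-1\asymp D^{n}$ as $D\to\infty$. Hence in regime (b), where $p=p(n)\asymp n^{-D}$ lies in the critical window, $\expect{|\randomgenset|}=Np\asymp n^{D}\cdot n^{-D}=1$, so $\expect{|\randomgenset|}$ is squeezed between two positive constants for all large $n$; and since such $p$ tends to $0$, eventually $\tfrac12\le 1-p\le 1$, so $\var{|\randomgenset|}=Np(1-p)\asymp Np\asymp 1$ as well. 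Regime (c) is identical with $n^{-D}$ replaced by $D^{-n}$ and the roles of $n$ and $D$ exchanged.

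There is essentially no obstacle here: the proposition is a direct corollary of the exact binomial moments of $|\randomgenset|$ together with the estimates $\binom{n+D}{D}\asymp n^{D}$ and $\binom{n+D}{D}\asymp D^{n}$ that are already in hand. The only points to be a little careful about are (i) that $\expect{|\randomgenset|}=Np(n)$ need not literally \emph{converge} as $n\to\infty$ unless $p(n)n^{D}$ does, so the content of (b) and (c) is really the two-sided bound ``$k_1\le\expect{|\randomgenset|}\le k_2$ and $c_1\le\var{|\randomgenset|}\le c_2$ for $n$ (resp.\ $D$) sufficiently large,'' which is exactly what the squeeze above delivers; and (ii) that in part (a) the variance equals $1-1/N$ on the nose, so the clean value ``$1$'' should be read up to that $O(1/N)$ correction. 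Keeping $1-p$ bounded away from $0$ — the one thing that could in principle fail — is automatic because the critical-window hypotheses force $p\to0$.
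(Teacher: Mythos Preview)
Your argument is correct and follows essentially the same approach as the paper: the paper's proof simply observes that $\randomgenset=\randomgenset_D$ and invokes the values of $\expect{|\randomgenset_D|}$ and $\var{|\randomgenset_D|}$ computed in the proof of Lemma~\ref{lm:size.Bd.threshold}, which is exactly what you do. Your careful remarks about the $1-1/N$ variance in part~(a) and about boundedness versus convergence in parts~(b) and~(c) are more explicit than the paper's one-line proof and are well taken.
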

\begin{proof}
Since $\randomgenset=\randomgenset_D$, each of these claims follows directly from the calculated values of $\expect{|\randomgenset_D|}$ and $\var{|\randomgenset_D|}$ from the proof of Lemma~\ref{lm:size.Bd.threshold}.
\end{proof}

\smallskip

By combining threshold functions from Theorem~\ref{thm:gradedBettiThreshold} we can obtain Corollary~\ref{cor:gradedBettiThresholdRanges} announced in the Introduction: the result specifies ranges of $p$ for which $\initdeg{\randomideal}=d$ asymptotically almost surely. The argument is identical to the one used in Section~\ref{sec:krullThresholds} to obtain Corollary~\ref{cor:KrullDimThresholdRanges} from Theorem~\ref{thm:KrullDimThreshold}.

\subsection{Degree complexity}\label{sec:degComplexity}

In the previous subsection, we saw how the choice of $p$ influences the smallest degree of a minimal generator of a random monomial ideal.  In this subsection, we ask the complementary question: what can one say about the \emph{largest} degree of a minimal generator of a random monomial ideal?  The following theorem establishes an asymptotic bound for the degree complexity $\dc{\randomideal}$ of $\randomideal\sim\umodel$ for certain choices of probability parameter $p$.

\begin{thm} \label{thm:degreecomplexity}
Let $n$ be fixed, $\randomideal\sim\umodel$, $p=p(D)$ and let $r=r(D)$ be a function tending to infinity as $D\to\infty$.  If $p=\littleomega{\frac{1}{r}}$, then $\dc{\randomideal}\le nr$ a.a.s.
\end{thm}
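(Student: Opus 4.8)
The plan is to avoid estimating the first moment of the number of high-degree minimal generators directly---that sum, although convergent, is delicate because it must control a large number of ``thin'' monomials---and instead to condition on a cheap global event: that for every variable some low-degree \emph{pure power} already lies in $\randomgenset$. Such a pure power divides every monomial that is large in that variable, and then pigeonhole forces each minimal generator to have small degree. As a harmless preliminary, generators always have degree at most $D$, so $\dc{\randomideal}\le D$ deterministically; hence for those $D$ with $r(D)\ge D$ the inequality $nr(D)\ge D\ge\dc{\randomideal}$ holds trivially, and we may assume $r=r(D)<D$. Replacing $r$ by $\lfloor r\rfloor$, which is a positive integer smaller than $D$ once $D$ is large (since $r\to\infty$), the monomials $x_i,x_i^2,\dots,x_i^{\lfloor r\rfloor}$ are all eligible to be included by the model.

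First I would set up the event: for each $i\in\{1,\dots,n\}$ let $E_i$ be the event that $x_i^{\,j}\in\randomgenset$ for at least one $j\in\{1,\dots,\lfloor r\rfloor\}$. Its complement says that none of these $\lfloor r\rfloor$ distinct monomials lies in $\randomgenset$, and since each is included independently with probability $p$, $\prob{\overline{E_i}}=(1-p)^{\lfloor r\rfloor}\le e^{-p\lfloor r\rfloor}$. As $p=\littleomega{1/r}$, we have $p\lfloor r\rfloor\to\infty$, so $\prob{\overline{E_i}}\to0$; since $n$ is fixed, the union bound gives $\prob{\bigcup_{i=1}^n\overline{E_i}}\le n\,e^{-p\lfloor r\rfloor}\to0$, i.e.\ $\bigcap_{i=1}^n E_i$ holds a.a.s.

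Second I would check that on $\bigcap_{i=1}^n E_i$ the inequality $\dc{\randomideal}\le nr$ holds \emph{deterministically}. Fix, for each $i$, an integer $j_i\le\lfloor r\rfloor$ with $x_i^{j_i}\in\randomgenset$, and recall that the unique minimal monomial generating set of $\randomideal=\ideal{\randomgenset}$ is exactly the set of divisibility-minimal elements of $\randomgenset$. Suppose $x^{\alpha}$ were a minimal generator with $|\alpha|>nr$. Since the $\alpha_i$ are nonnegative integers, $\sum_i\alpha_i>nr$ forces $\alpha_i\ge\lfloor r\rfloor+1>j_i$ for some $i$; for that $i$ the monomial $x_i^{j_i}$ is a \emph{proper} divisor of $x^{\alpha}$ that belongs to $\randomgenset$, contradicting minimality. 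Hence on $\bigcap_i E_i$ every minimal generator of $\randomideal$ has degree at most $nr$, and since $\prob{\bigcap_i E_i}\to1$ the theorem follows. (The same argument covers $n=1$, and the degenerate case $\lfloor r\rfloor=0$ occurs for only finitely many $D$ and so is irrelevant to the limit.)

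The only real content is the choice of conditioning event. A naive union bound over all monomials of degree greater than $nr$ is too lossy whenever $pr$ grows slowly (for instance $p\asymp(\log\log D)/r$), and weakening ``a low-degree power of \emph{each} variable appears'' to merely ``\emph{some} low-degree monomial appears''---which is all that Lemma~\ref{lm:size.Bd.threshold} supplies directly---does not work, since such a monomial need not divide a prescribed high-degree monomial. Once the event is in place, every remaining step is an elementary probability computation together with the standard description of the minimal generators of a monomial ideal, plus the trivial bookkeeping around rounding $r$ and the case $r\ge D$.
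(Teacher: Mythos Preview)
Your proof is correct and follows essentially the same route as the paper: both arguments show that a.a.s.\ each variable $x_i$ has a pure power $x_i^{j_i}$ with $j_i\le r$ already in $\randomgenset$, and then use pigeonhole to conclude that any monomial of degree exceeding $nr$ is divisible by one of these pure powers and hence cannot be a minimal generator. Your write-up is in fact more careful than the paper's (you handle the rounding of $r$, the case $r\ge D$, and compute $\prob{\overline{E_i}}=(1-p)^{\lfloor r\rfloor}$ directly rather than appealing to the proof of the zero-dimensionality threshold), but the underlying idea is identical.
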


\begin{proof}  
By the proof of Theorem~\ref{thm:zerodim}, for each variable $x_i$, a.a.s. $\randomgenset$ will contain a monomial of the form $x_i^j$ where $j\le r$.  If $j_1,\ldots,j_n$ are integers such that $1\le j_i\le r$ for each $1\le i\le n$, then the ideal $(x_1^{j_1},\ldots,x_n^{j_n})$ contains all monomials of degree $\geq rn$ in $S$, since if $\deg({x}^\alpha)\geq r$, by the pigeonhole principle at least one $\alpha_i\geq \lfloor\frac{r}{n}\rfloor$.  Hence, with probability tending to 1, $\randomideal$ will contain every monomial in $S$ of degree $nr$ and thus every minimal generator has degree at most $nr.$
\end{proof}

To illustrate this result, suppose that $p=\littleomega{{1}/{\log D}}$ and let $\randomideal\sim\umodel$.  Then, by Theorem~\ref{thm:degreecomplexity}, as $D$ tends to infinity, one should expect $\dc{\randomideal}$ to be at most $n\log D$. One should keep in mind that $\randomideal$ never contains generators of degree larger than $D$, and so the bound given in this theorem is not optimal for certain choices of $r$.

\section{Other probabilistic models}
\label{sec:othermodels}

As mentioned in the Introduction, the study of `typical' ideals from a family of interest may require not only the \modelname{}, but also 
more general models of random monomial ideals. To that end, we define the most general probabilistic model on sets of monomials 
which, a priori, provides no structure, but it does provide a framework within which other models can be recovered.
\paragraph{A general model.}     
Fix a degree bound $D>0$. To \emph{each} monomial $x^\alpha\in\kring$ with $0<\deg(x^{\alpha})\leq D$, the \emph{general model} for random monomials assigns an arbitrary probability $0\leq p_\alpha\leq 1$ of selecting the monomial $x^{\alpha}$:  
\begin{equation}
	\label{eq:generalModel}
	\prob{x^\alpha} = p_\alpha.
\end{equation} Hence the general model has  many parameters, namely $\{p_\alpha:\alpha\in\mathbb N^n\setminus\{0\},|\alpha|\leq D\}$.  It is clear that
that, for a fixed $n$ and fixed degree bound $D$, the  \modelname{} is  a  special case of the general model, where $p_\alpha=p(n,D)$, for all $\alpha\in\mathbb N^n$ such that $0<|\alpha|\le D$, does not depend on $\alpha$.  

Of course, there are many other interesting models one can consider. We define another  natural extension of \modelname{}. 
\paragraph{The graded model.}  
Fix a degree bound $D>0$. The \emph{graded model} for random monomials places the following probability on each monomial $x^\alpha\in\kring$ with $0<\deg(x^\alpha)\leq D$:
	\begin{equation}
 		p_\alpha=f(n,D,|\alpha|), 
	\end{equation}
where $0\leq p_\alpha\le 1$.  
Note that,  given $\deg(x^\alpha)=|\alpha|$, the probability of each monomial is  otherwise constant in $\alpha$. 
The graded model is a $D$-parameter 
 family of probability distributions on  random sets of monomials that induces a distribution on monomial ideals in the same natural way as the \modelname{}. Namely, the probability model selects random monomials and they are added to a generating set with probability according to the model, thus producing a random monomial ideal in $\kring$ whose generators are of degree at most $D$. 
More precisely, let $\mathbf{p}(n,D)=(p_1(n,D),\ldots,p_D(n,D))$ be such that $0\leq p_i(n,D)<1$ for all $1\le i\le D$.
 Initialize $B=\emptyset$ and for each monomial $x^\alpha$ of degree $i$, $1\leq i\leq D$, add $x^\alpha$ to $B$ with probability $p_i(n,D)$. 
 The resulting random monomial ideal is $I=\ideal{B}$ (again with the convention that if $B=\emptyset$, then we set $I=\ideal{0}$). 
Denote by $\gmodel$  the resulting induced distribution on monomial ideals.  As before, since ideals are now random variables, we will write  
$\randomideal\sim\gmodel$ for a random monomial ideal generated using the graded model.

The analogue of Theorem~\ref{thm:yuge} for $\gmodel$, which has an almost identical proof, is as follows:
\begin{thm}  \label{thm:hilbertDistributionGraded}
	Fix $n$, $D$, and the graded model parameters $\mathbf{p}(n,D)=(p_1,\dots,p_D)$. 
	For any fixed monomial ideal $\fixideal\subseteq S$, random monomial ideals from the graded model distribution $\randomideal\sim\gmodel$ satisfy the following: 
	\[ 
		P(\randomideal=\fixideal)=\prod_{d=1}^Dp_d^{\beta_{1,d}(S/\fixideal)}(1-p_d)^{h_\fixideal(d)}, 
	\]
	where $\beta_{1,d}(S/\fixideal)$ is the number of degree-$d$ minimal generators of $\fixideal$ (that is, the first graded Betti number of $S/I$), and $h_I(d)$ is its Hilbert function. 
\label{thm:gradedhilbdist}
\end{thm}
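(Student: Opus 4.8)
The plan is to mimic the proof of Theorem~\ref{thm:yuge} almost verbatim, replacing the single parameter $p$ by the degree-dependent parameters $p_d$ and keeping careful track of which degree each monomial lives in. First I would fix a monomial ideal $\fixideal\subseteq S$ with unique minimal generating set $\mingenset$, and observe as before that for $\randomideal\sim\gmodel$ we have $\randomideal=\fixideal$ if and only if every monomial of $\mingenset$ is selected and no monomial $x^\alpha\notin\fixideal$ (with $0<|\alpha|\le D$) is selected. These two conditions again refer to disjoint sets of monomials, so the corresponding events are independent and the probability factors as a product.

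Next I would compute each factor degree by degree. For the "inclusion" event, the minimal generators of $\fixideal$ in degree $d$ number $\beta_{1,d}(S/\fixideal)$, and each is selected independently with probability $p_d$, contributing $\prod_{d=1}^D p_d^{\beta_{1,d}(S/\fixideal)}$. For the "exclusion" event, I need the number of monomials of degree exactly $d$ that are \emph{not} in $\fixideal$; by the combinatorial interpretation of the Hilbert function of a monomial ideal recalled at the start of Section~\ref{sec:hilbertdistribution}, this count is precisely $h_\fixideal(d)$ (the number of standard monomials of degree $d$). Each such monomial is omitted independently with probability $1-p_d$, contributing $\prod_{d=1}^D (1-p_d)^{h_\fixideal(d)}$. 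Multiplying the two products gives the claimed formula. Note there is no overall $-1$ correction here, unlike in Theorem~\ref{thm:yuge}: the constant monomial $1$ has degree $0$ and is simply never among the monomials the model considers, so it does not enter either product.

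I do not anticipate a genuine obstacle; the only point requiring a sentence of care is the bookkeeping of the monomial of degree $0$ and the verification that the two index sets (minimal generators, versus standard monomials of each degree) are exactly the degree-graded pieces of $\mingenset$ and of the complement of $\fixideal$ in $\{x^\alpha : 0<|\alpha|\le D\}$. Everything else is the independence-and-factorization argument already used for Theorem~\ref{thm:yuge}, so I would simply remark that the proof is "almost identical" and present the degree-wise product computation.
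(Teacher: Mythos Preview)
Your proposal is correct and is exactly what the paper does: it states only that the theorem ``has an almost identical proof'' to Theorem~\ref{thm:yuge} and gives no further argument. Your degree-by-degree factorization of the inclusion and exclusion events is precisely the intended adaptation, and your remark about the missing $-1$ (since the product starts at $d=1$ and the constant monomial lies in degree $0$) is the only bookkeeping difference worth noting.
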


\subsection{Random monomial ideals generalize random simplicial complexes}

An \emph{abstract simplicial complex} on the vertex set $[n]$ is a collection of subsets (called \emph{faces}) of $[n]$ closed under the operation of taking subsets.   The \emph{dimension} $\dim(\sigma)$ of a face $\sigma\subseteq [n]$ is $|\sigma|-1$.  If $Y$ is an abstract simplicial complex on $[n]$, then the \emph{dimension} $\dim(Y)$ of $Y$ is $\max\{\dim(\sigma):\sigma\text{ is a face of } Y\}$
and the \emph{Stanley-Reisner ideal} $I_{Y}$ in the polynomial ring $\kring$ is the square-free monomial ideal generated by the monomials corresponding to the non-faces $\tau$ of $Y$: $I_{Y}\coloneqq\langle \tau:\tau\subseteq[n], \tau\not\in Y\rangle.$ For example, if $n=2$ and $Y=\{\emptyset,\{1\},\{2\}\},$ then $I_{Y}=\langle x_1x_2\rangle\subset k[x_1,x_2].$  A fundamental result in combinatorial commutative algebra is that the Stanley-Reisner correspondence constitutes a bijection between abstract simplicial complexes on $[n]$ and the square-free monomial ideals in $\kring$. See, for example,  \cite[Theorem 1.7]{miller+sturmfels}. 

In  \cite{costafarber}, Costa and Farber  introduced a model for generating abstract simplicial complexes at random.  Their model, which we call
the \emph{Costa-Farber model}, is a probability distribution on $\Delta_n^{(r)}$, the set of all abstract simplicial complexes on $[n]$ of dimension at most $r$. In it, one selects a vector $\mathbf{\tilde{p}}=(\tilde{p}_0,\ldots,\tilde{p}_r,0,\ldots,0)$ of probabilities such that $0\le \tilde{p}_l\le1$ for all $0\le l\le r$.  Then, one retains each of the $n$ vertices with probability $\tilde{p}_0$, and for each pair $ij$ of remaining vertices, one forms the edge $ij$ with probability $\tilde{p}_1$, and for each triple of vertices $ijk$ that are pairwise joined by an edge, one forms the $2$ dimensional face $ijk$ with probability $\tilde{p}_2,$ and so on. 

One can study the combinatorial and topological properties of the random simplicial complexes generated by the Costa-Farber model.
In view of the Stanley-Reisner correspondence, this model can be seen as a model for generating random square-free monomial ideals.  
Thus, 
the general model for generating random monomial ideals can be viewed as a generalization of the Costa-Farber model, which in turn has been
proved to generalize many other models of random combinatorial objects, for example, the Erd\H os-R\'enyi model for random graphs and  Kahle's model for random clique complexes \cite{kahlesurvey}. See \cite[Section 2.3]{costafarber} for details.

The relationship between our models for random monomial ideals and the Costa-Farber model can be made precise in the following way: there exists a choice of parameters $p_{\alpha}$ in \eqref{eq:generalModel} such that the resulting distribution on square-free monomial ideals in $S=\kring$ is precisely the distribution on the abstract simplicial complexes on $[n]$ under the Costa-Farber model.  

\begin{thm} \label{thm:CostaFarberCorresp}
Let $\mathbf{\tilde{p}}=(\tilde{p}_0,\tilde{p}_1,\ldots,\tilde{p}_{r},0,\ldots,0)$ denote the $n$-vector of probabilities in the Costa-Farber model for random simplicial complexes. Let $Y\subset\Delta_n^{(r)}$ be a simplicial complex on $[n]$ of dimension at most $r$ and let $I_Y$ be the Stanley-Reisner ideal corresponding to $Y$.  
Fix $D = r + 1$ and specify the following probabilities $p_{\alpha}$
, where $\alpha\in\mathbb{N}^n$ and $0<||\alpha||_1\le r+1$, for the general monomial model~\eqref{eq:generalModel}: 
\begin{equation}
\label{eq:palphas}
p_{\alpha}=
\begin{cases}
1-\tilde{p}_{\deg(x^{\alpha})-1},&\mbox{if } 0\not=\alpha\in\{0,1\}^n,\\
0,&\mbox{otherwise }.\\
\end{cases}
\end{equation}
Then, $P_{CF}(Y)=P(I_Y)$, where the former is probability under the Costa-Farber model and the latter under the distribution on random monomial ideals induced by the general model~\eqref{eq:generalModel}.  
\label{thm:CF}
\end{thm}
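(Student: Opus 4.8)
The plan is to compute both sides of the claimed equality $P_{CF}(Y) = P(I_Y)$ explicitly and match them factor by factor. The key translation is the Stanley-Reisner dictionary: under the general model with the parameters $p_\alpha$ from \eqref{eq:palphas}, all non-squarefree monomials have probability $0$ of being selected, so with probability $1$ the random generating set $\randomgenset$ consists only of squarefree monomials; hence the random ideal $\randomideal$ is automatically a squarefree monomial ideal, i.e., $\randomideal = I_Z$ for a (random) simplicial complex $Z$ on $[n]$. By the Stanley-Reisner bijection, $\randomideal = I_Y$ if and only if $Z = Y$, and moreover $\randomideal = I_Y$ exactly when the random generating set $\randomgenset$ contains all the minimal non-faces of $Y$ and no squarefree monomial $x^\tau$ with $\tau \in Y$. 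I would first establish this combinatorial reformulation carefully.

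Next I would apply Theorem~\ref{thm:yuge}, or rather its graded analogue, but in fact it is cleanest to reason directly as in the proof of Theorem~\ref{thm:yuge}: since monomial inclusions are independent, $P(\randomideal = I_Y)$ factors as $\prod_{\tau \notin Y}(1-p_\tau) \cdot \prod_{\text{min non-faces }\tau}p_\tau$ — but one must be careful, because a non-minimal non-face $\tau \notin Y$ need \emph{not} be excluded from $\randomgenset$ (it only needs to not be a minimal generator, and indeed it lies in $I_Y$ regardless). So the correct statement is: $\randomideal = I_Y$ iff every $x^\tau$ with $\tau$ a face of $Y$ is absent, and every minimal non-face is present; the non-minimal non-faces are unconstrained. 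Thus
\[
P(\randomideal = I_Y) = \prod_{\sigma \in Y,\ \sigma\neq\emptyset}(1-p_\sigma)\prod_{\tau \text{ a minimal non-face of }Y} p_\sigma \cdot \prod_{\tau \text{ non-minimal non-face}} 1,
\]
and substituting $p_\sigma = 1 - \tilde p_{|\sigma|-1}$ for $\sigma \in \{0,1\}^n\setminus\{0\}$ turns the first product into $\prod_{\sigma\in Y, \sigma\neq\emptyset}\tilde p_{\dim\sigma}$ and the second into $\prod_{\tau}(1-\tilde p_{\dim\tau})$ over minimal non-faces.

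On the Costa-Farber side, I would unwind the recursive description of $P_{CF}(Y)$: a face $\sigma$ of dimension $l$ survives iff it is "added" (probability $\tilde p_l$) and all of its proper faces survive; a set $\tau$ whose entire boundary is present is \emph{not} a face iff it fails to be added (probability $1 - \tilde p_{\dim\tau}$); and sets some of whose proper faces are already absent impose no further constraint. Comparing the event "$\sigma$ survives for all $\sigma\in Y$ and $\tau$ fails for all minimal non-faces $\tau$" against the factorization above, the two products match term for term — this is essentially the standard identity (see \cite[Section 2.3]{costafarber}) that
\[
P_{CF}(Y) = \prod_{\sigma\in Y,\ \sigma\neq\emptyset}\tilde p_{\dim\sigma}\ \prod_{\tau \text{ minimal non-face of }Y}\left(1 - \tilde p_{\dim\tau}\right).
\]
The main obstacle is bookkeeping: one must verify that the set of monomials which are "unconstrained" in the monomial model (non-minimal generators lying in $I_Y$) corresponds exactly to the set of subsets which are "unconstrained" in the Costa-Farber model (subsets with a missing proper face), and that the degree bound $D = r+1$ together with $p_\alpha = 0$ for non-squarefree $\alpha$ and for $|\alpha| > r+1$ does not introduce extra constraints — in particular that no face of $Y$ has dimension exceeding $r$, so every relevant monomial has degree at most $r+1$. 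Once the index sets are shown to biject under $\sigma \leftrightarrow x^\sigma$, the factor-by-factor equality $P(\randomideal = I_Y) = P_{CF}(Y)$ is immediate.
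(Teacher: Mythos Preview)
Your proposal is correct and follows essentially the same approach as the paper. The paper quotes the Costa--Farber formula $P_{CF}(Y)=\prod_{i=0}^r \tilde{p}_i^{f_i(Y)}(1-\tilde{p}_i)^{e_i(Y)}$ directly from \cite{costafarber}, identifies $e_i(Y)=\beta_{1,i+1}(S/I_Y)$ and $f_i(Y)$ with the number of degree-$(i+1)$ squarefree standard monomials, then invokes Theorem~\ref{thm:gradedhilbdist} to obtain the matching product for $P(I_Y)$; you instead argue both sides directly from independence of monomial inclusions, but the resulting factor-by-factor comparison over faces and minimal non-faces is identical.
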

In other words, the specification of probabilities in  Theorem~\ref{thm:CostaFarberCorresp} recovers the Costa-Farber model on random simplicial complexes as a sub-model of the model~\eqref{eq:generalModel} on random monomial ideals.   
Note that this specification of probabilities can be considered as an instance of the graded model with support restricted to the vertices of the unit hypercube.

\begin{proof}  From \cite[Equation (1)]{costafarber}, the following probability holds under the Costa-Farber model:
\[
P_{CF}(Y)=\prod_{i=0}^r \tilde{p}_i^{f_i(Y)}(1-\tilde{p}_i)^{e_i(Y)},
\]
where $f_i(Y)$ denotes the number of $i$-dimensional faces of $Y$ and $e_i(Y)$ denotes the number of $i$-dimensional \emph{minimal} non-faces of $Y$ (i.e., the number of $i$-dimensional non-faces of $Y$ that do not strictly contain another non-face).  The minimal non-faces of $Y$ correspond exactly to the minimal generators of the Stanley-Reisner ideal $I_Y$.  Thus, $I_Y$ has exactly $e_i(Y)$ minimal generators of degree $i+1$, that is, $e_i(Y)=\beta_{1,i+1}(S/I_Y)$.  Each $i$-dimensional face of $Y$ corresponds to a degree $i+1$ standard square-free monomial of $I_Y$, hence  we have a Hilbert function value$h_{I_Y}(i+1)=f_i(Y).$  Next, note that the specification of probabilities in~\eqref{eq:palphas} depends only on the degree of each monomial, so that if $\deg(x^{\alpha})=\deg(x^{\alpha'})$, then $p_{\alpha}=p_{\alpha'}.$  Hence, for each $1\le j\le r+1,$ we denote by $p_j$ the probability assigned to the degree $j$ monomials in~\eqref{eq:palphas}, so $p_j=1-\tilde{p}_{j-1}$.  We now apply Theorem~\ref{thm:gradedhilbdist} to conclude that
$$P(I_Y)=\prod_{j=1}^{r+1}p_j^{\beta_{1,j}}(1-p_j)^{h_{I_Y}(j)}=\prod_{i=0}^{r}(1-\tilde{p}_i)^{e_i(Y)}\tilde{p}_i^{f_i(Y)}=P_{CF}(Y), \ \text{as desired.}$$  \end{proof}

{\small
\begin{table}[h]
	\begin{center}
	\begin{tabularx}{1.0\textwidth}{cccccc}
\toprule

		Simplicial \\complex $Y$ & 
		Ideal $I_Y$ &
		Faces of $Y$ &
		Non-faces of $Y$ &
		$P_{\mathrm{CF}}(Y)$ &
		$P(I_Y)$ \\
\midrule
			\rule{0pt}{0ex}
			Void
			& $k[x_1,x_2]$
			&none
			&$\emptyset,\ x_1,\ x_2,\ x_1x_2$
			&$0$
			&$0$\\
			\rule{0pt}{4ex}
			$\emptyset$
			& $\ideal{x_1,x_2}$
			& $\emptyset$
			&$x_1,\ x_2,\ x_1x_2$
			&$(1-\tilde{p}_0)^2$
			&$p_{1}^2$\\
			\rule{0pt}{4ex}
			\begin{tikzpicture}[baseline=-8]
    			\tikzstyle{point}=[circle,thick,draw=black,fill=black,inner sep=0pt,minimum width=4pt,minimum 				height=4pt]
    			\node (a)[point, label={[label distance=0mm]270:$1$}] at (0,0) {}; 
			\end{tikzpicture}
			&$\ideal{x_2}$
			&$\emptyset,\ x_1$
			&$x_2,\ x_1x_2$
			&$\tilde{p}_0(1-\tilde{p}_0)$
			&$p_{1}(1-p_{1})$\\
			\rule{0pt}{4ex}
			\begin{tikzpicture}[baseline=-8]
    			\tikzstyle{point}=[circle,thick,draw=black,fill=black,inner sep=0pt,minimum width=4pt,minimum 				height=4pt]
    			\node (a)[point, label={[label distance=0mm]270:$2$}] at (0,0) {}; 
			\end{tikzpicture}
			&$\ideal{x_1}$
			&$\emptyset,\ x_2$
			&$x_1,\ x_1x_2$
			&$\tilde{p}_0(1-\tilde{p}_0)$
			&$p_{1}(1-p_{1})$\\
			\rule{0pt}{4ex}
			\begin{tikzpicture}[baseline=-8]
    			\tikzstyle{point}=[circle,thick,draw=black,fill=black,inner sep=0pt,minimum width=4pt,minimum 				height=4pt]
    			\node (a)[point, label={[label distance=0mm]270:$1$}] at (0,0) {};
    			\node (b)[point, label={[label distance=0mm]270:$2$}] at (1.0,0) {};    
			\end{tikzpicture}
			&$\ideal{x_1x_2}$
			&$\emptyset,\ x_1,\ x_2$
			&$x_1x_2$
			&$\tilde{p}_0^2(1-\tilde{p}_1)$
			&$(1-p_{1})(1-p_{1})p_{2}$\\
			\rule{0pt}{4ex}
			\begin{tikzpicture}[baseline=-8]
    			\tikzstyle{point}=[circle,thick,draw=black,fill=black,inner sep=0pt,minimum width=4pt,minimum 				height=4pt]
    			\node (a)[point, label={[label distance=0mm]270:$1$}] at (0,0) {};
    			\node (b)[point, label={[label distance=0mm]270:$2$}] at (1.0,0) {};    
    			\draw (a.center) -- (b.center);
			\end{tikzpicture}
			&$\ideal{0}$
			&$\emptyset,\ x_1,\ x_2,\ x_1x_2$
			&none
			&$\tilde{p}_0^2\tilde{p}_1$
			&$(1-p_{1})(1-p_{1})(1-p_{2})$\\
\bottomrule
	\end{tabularx}
	\vspace{1mm}
	\label{tab:CFcorresp}
	\caption{A table illustrating the correspondence between random monomial ideals and the Costa-Farber model.}
\end{center}
\end{table}
}

Table~\ref{tab:CFcorresp} illustrates Theorem~\ref{thm:CF} in the case $n=2$ and $r=1$.  The Costa-Farber model generates a random simplicial complex on $\{1,2\}$ by specifying a probability vector $\mathbf{\tilde{p}}=(\tilde{p}_0,\tilde{p}_1)$ and starting with two vertices $x_1$ and $x_2$.  We retain each vertex independently with probability $\tilde{p}_0$.  In the event that we retain both $x_1$ and $x_2$, we connect them with probability $\tilde{p}_1$. By Theorem~\ref{thm:CF}, to recover this model from Equation~\eqref{eq:generalModel} we  set $D=2$ and $p_{(2,0)}=p_{(0,2)}=0$, $ p_1=p_{(1,0)}=p_{(0,1)}=1-\tilde{p}_0$, and $p_2=p_{(1,1)}=1-\tilde{p}_1.$  The probability distribution of the set of all simplicial complexes $Y$ on $\{1,2\}$ and, equivalently, all square-free monomial ideals $I_Y\subseteq k[x_1,x_2]$ appear in the table.

To conclude, note that there are other models that do not sample individual monomials, but instead sample ensembles of monomials all at the same time.  Examples of these are the methods to generate random integer partitions and Ferrers diagrams \cite{pittel} and random lattice polytopes \cite{baranymatousek}.

\section{Experiments \& conjectures}   \label{sec:experiments}

This section collects experimental work with two purposes in mind: illustrating the theorems we proved in this paper, and also motivating further conjectures about the \modelname{}. These experiments were performed using {\tt Macaulay2} \cite{M2} and  {\tt SageMath} \cite{sagemath}, including the \texttt{MPFR} \cite{mpfr} and \texttt{NumPy} \cite{numpy} modules. Graphics were created using {\tt SageMath}. 

We ran several sets of experiments using the  \modelname{}.  All experiments considered varying number of variables $n$,  varying maximum degree $D$, and varying probability $p$. For each choice of a triple $(n,D,p)$, we generated a sample of $N=1000$ monomial ideals. We then computed some
algebraic properties and tabulated the results. The figures in this section summarize the results and support the conjectures we state.

\paragraph{Cohen-Macaulayness.} 
Recall that an ideal has an (arithmetically) \emph{Cohen-Macaulay} quotient ring if its depth equals its Krull dimension. 
As is well-known, Cohen-Macaulayness is a very special property from which one derives various special results; see
\cite{BrunsHerzog} for a standard reference.  

\begin{figure}[h]	
	\includegraphics[width=\textwidth]{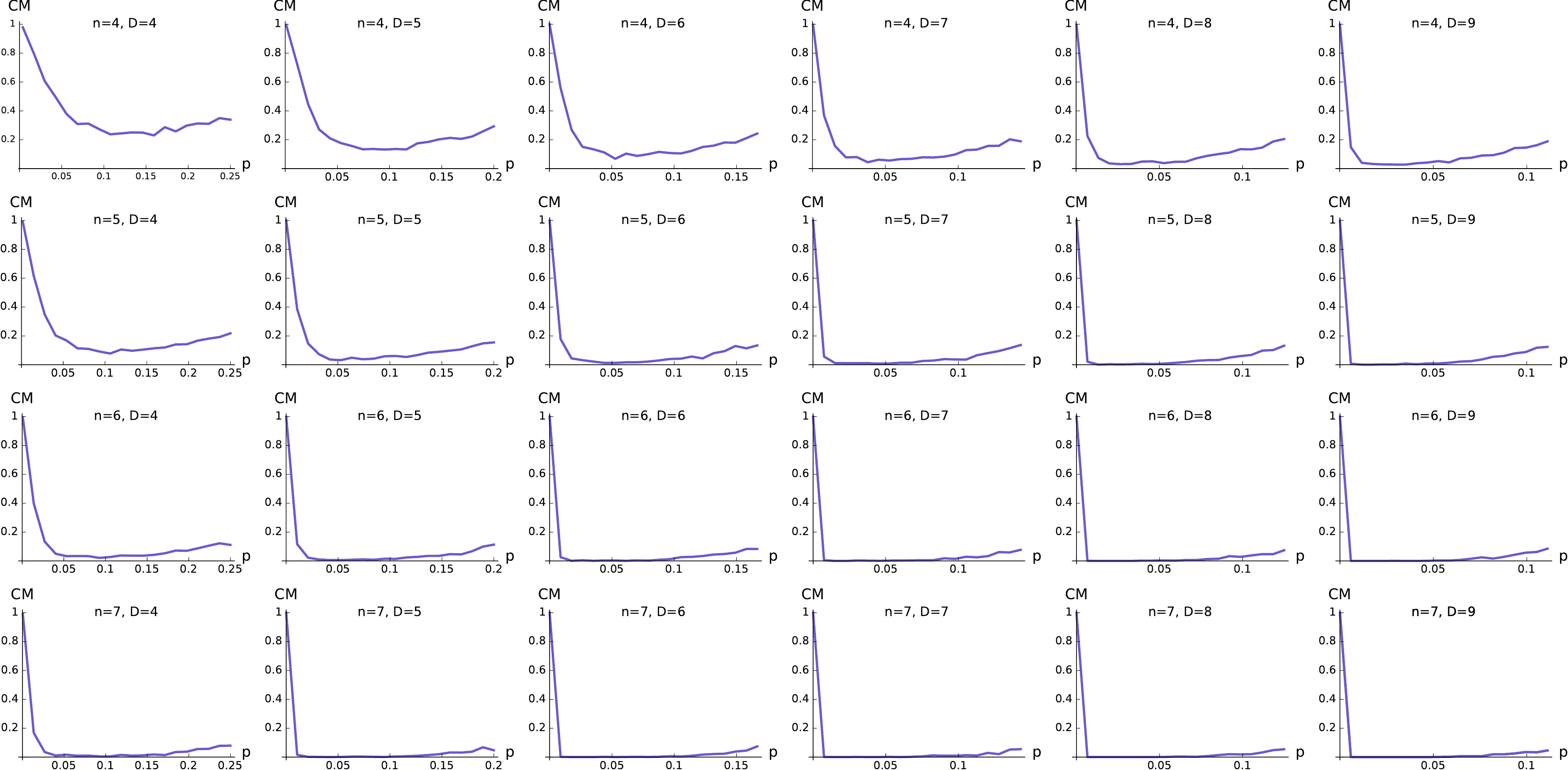}
	\caption{Frequency of random monomial ideals  $\randomideal\sim\umodel$ for which $S/\randomideal$ is  Cohen-Macaulay  for $(n,D)=(4,4)$ through $(n,D)=(7,9)$, as the model parameter $p$ takes values between $D^{-n}$ and ${D}^{-1}$. The sample size for each fixed value of $p$ is $1000$.}   
	\label{fig:CM:grid}
\end{figure}

Figure \ref{fig:CM:grid} shows the percent of ideals in the random samples whose quotient rings  are  arithmetically Cohen-Macaulay for $n=4,\dots,7$, $D=4,\dots,9$.  We see that, as $n$ and $D$ get larger, very few Cohen-Macaulay ideals are generated, suggesting that Cohen-Macaulayness is a ``rare" property in a meaningful sense. We speculate that the appearances of Cohen-Macaulay ideals are largely or entirely due to the appearance of the zero ideal and of zero-dimensional ideals, both of which are trivially Cohen-Macaulay. For the smallest $n$ and $D$ values, the zero ideal appears with observable frequency for all $p$ values, as do zero-dimensional ideals. As $p$ decreases toward the zero ideal threshold or increases toward the zero-dimensional threshold, these probabilities grow, resulting in a ``U" shape. (Note that probabilities continue to increase to 1 for larger $p$ beyond the domain of these plots.)  However, as $n$ and $D$ grow, the thresholds for the zero ideal and for zero-dimensionality become pronounced, and even as $p$ approaches either threshold the frequency of Cohen-Macaulayness stays low. When $p$ is so low that only the zero ideal is generated, Cohen-Macaulayness is of course observed with probability 1, but as soon as nontrivial ideals are generated, the frequency plummets to near zero. These experimental results suggest the following conjecture: 

\begin{conj} \label{conj:CM} 
	For large $n$ and $D$, the only Cohen-Macaulay ideals generated by the \modelname{} are the trivial cases (the zero ideal or zero-dimensional ideals), with probability approaching $1$. In particular, using the threshold functions of Theorems \ref{thm:KrullDimThreshold} and \ref{thm:gradedBettiThreshold}, we conjecture that as $n$ goes to infinity, the probability that $\randomideal\sim\umodel$ is Cohen-Macaulay will go to zero for $p=p(D)$ satisfying $p(D)=\littleoh{D^{-1}}$ and $p(D)=\littleomega{D^{-n}}$.
\end{conj}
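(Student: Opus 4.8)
The plan is to strip away the trivially Cohen--Macaulay outcomes using the threshold results already proved, and then to show that on the complementary event $\randomideal$ fails to be Cohen--Macaulay by exhibiting combinatorial obstructions that occur asymptotically almost surely. In the regime of the conjecture ($p=\littleomega{D^{-n}}$, $p=\littleoh{D^{-1}}$), Corollary~\ref{cor:zeroideal} gives $\randomideal\neq\ideal{0}$ a.a.s., Corollary~\ref{thm:zerodim} (the $t=1$ case of Theorem~\ref{thm:KrullDimThreshold}) gives $\dim(S/\randomideal)\geq 1$ a.a.s., and Lemma~\ref{lm:supp} with $t=1$ gives that a.a.s.\ $\randomgenset$, and hence $\randomideal$, contains no pure power $x_i^k$. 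So it suffices to show that, conditioned on these events, $\randomideal$ is a.a.s.\ not Cohen--Macaulay. As a Cohen--Macaulay quotient is necessarily unmixed (all associated primes of the same height, no embedded primes), it is enough to find, a.a.s., a witness to one of: (i) two inclusion-minimal vertex covers of the support hypergraph $G(\randomgenset)$ of different cardinalities, i.e.\ non-equidimensionality of $\randomideal$, via the transversal description of Section~\ref{sec:Transversals}; (ii) an embedded associated prime, say $\mathfrak m=\ideal{x_1,\dots,x_n}\in\operatorname{Ass}(S/\randomideal)$, i.e.\ $\operatorname{depth}(S/\randomideal)=0$ while $\dim(S/\randomideal)\geq 1$; or (iii) after polarizing $\randomideal$ to a squarefree ideal (which preserves Cohen--Macaulayness), a face of the associated simplicial complex whose link is disconnected and of positive dimension, contradicting Reisner's criterion.

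The bulk of the work would be a case analysis organized by how sparse $p$ is relative to the thresholds $D^{-j}$; write informally $p\asymp D^{-s}$ with $s\in(1,n)$. The support-size profile of the minimal generators is pinned down by Lemma~\ref{lm:supp} together with the first/second moment computations in the proof of Theorem~\ref{thm:KrullDimThreshold}: a.a.s.\ $\randomgenset$ has no monomial of support size $\leq\lfloor s\rfloor$, while for each set $\sigma$ with $|\sigma|=\lceil s\rceil$ the number of monomials of $\randomgenset$ supported exactly on $\sigma$ tends to infinity, so (when $s\notin\mathbb Z$, with $n$ fixed and $D\to\infty$) the minimal support hypergraph $G_{\min}(\randomgenset)$ is a.a.s.\ the complete $\lceil s\rceil$-uniform hypergraph. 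In each regime one searches for one of the witnesses above and proves its a.a.s.\ appearance by the second moment method, reusing the covariance bookkeeping (sums of independent indicators, one per monomial) that recurs throughout Section~\ref{sec:bettistuff}: toward (ii), one tries to produce a standard monomial $m$ all of whose immediate multiples $x_im$ lie in $\randomideal$; and in the sparsest part of the range ($s$ near $n$), where a.a.s.\ every minimal generator is divisible by $x_1\cdots x_n$, one uses the factorization $\randomideal=x_1\cdots x_n\cdot\randomideal'$ with $\randomideal'$ itself an ER-type random ideal (on degree bound $D-n$) and the exact sequence $0\to S/\randomideal'\xrightarrow{\,x_1\cdots x_n\,}S/\randomideal\to S/\ideal{x_1\cdots x_n}\to 0$ to transfer the claim to $\randomideal'$.

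The main obstacle is that Cohen--Macaulayness is a genuinely homological invariant, not visible in any single bounded substructure. In particular, for $s\notin\mathbb Z$ the radical $\sqrt{\randomideal}$ is a.a.s.\ the edge ideal of a complete uniform hypergraph --- equivalently the Stanley--Reisner ideal of a skeleton of a simplex, which \emph{is} Cohen--Macaulay --- and in the sparsest regime $\sqrt{\randomideal}$ is a.a.s.\ the principal ideal $\ideal{x_1\cdots x_n}$, again with Cohen--Macaulay quotient. Thus the failure of Cohen--Macaulayness cannot be read off the radical, and neither obstruction (i) nor any argument using only the primary decomposition of $\sqrt{\randomideal}$ can close the case: the failure has to come from the non-radical part of $\randomideal$, namely the ``pushed-up staircases'' created because the low-degree monomials supported on each small set $\sigma$ are a.a.s.\ absent from $\randomgenset$ (their least-degree witness has degree of order $D^{s/|\sigma|}$). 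Converting this combinatorial picture into the strict inequality $\operatorname{depth}(S/\randomideal)<\dim(S/\randomideal)$ --- via Takayama's or Hochster's combinatorial formula for $H^i_{\mathfrak m}(S/\randomideal)$, or via iterated short exact sequences built from colon ideals $(\randomideal:m)$ --- is the step I expect to be the real difficulty, and the critical windows $p\asymp D^{-j}$ (where $G_{\min}(\randomgenset)$ is a nontrivial random clutter rather than a complete uniform hypergraph), as well as the case $n\to\infty$, will require separate treatment.
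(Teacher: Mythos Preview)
The statement you are attempting to prove is labeled a \emph{Conjecture} in the paper, and the paper does not prove it: it is stated in Section~\ref{sec:experiments} purely on the basis of simulation evidence (Figure~\ref{fig:CM:grid}), with no accompanying argument. There is therefore no ``paper's own proof'' against which to compare your proposal.

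What you have written is not a proof either, and you are candid about this: it is a research outline. Your reductions in the first paragraph are correct and do follow from the cited results (Corollary~\ref{cor:zeroideal}, Corollary~\ref{thm:zerodim}, Lemma~\ref{lm:supp}), and your identification of the main obstruction is sharp --- namely, that in the regime $p\asymp D^{-s}$ with $s\notin\mathbb Z$ the radical $\sqrt{\randomideal}$ is a.a.s.\ the Stanley--Reisner ideal of a skeleton of a simplex, which \emph{is} Cohen--Macaulay, so any failure must be detected in the non-reduced structure. But the step you flag as ``the real difficulty'' --- converting the combinatorial picture of staircases into a strict inequality $\operatorname{depth}(S/\randomideal)<\dim(S/\randomideal)$ via Takayama's or Hochster's formula --- is precisely the content of the conjecture, and you have not supplied it. The case analysis you sketch (witness to non-equidimensionality, embedded prime at $\mathfrak m$, disconnected link after polarization) is a reasonable taxonomy of possible obstructions, but none of them is actually established for any sub-range of $p$; in particular the second-moment computation for a socle element (a standard monomial $m$ with all $x_im\in\randomideal$) would require controlling correlations among events that are far from independent, and you have not indicated how to do this. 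As it stands, the conjecture remains open.
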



\paragraph{Projective Dimension.}

Further exploring the complexity of minimal free resolutions, and  how much the ranges of Betti numbers vary, we investigate 
the \emph{projective dimension} of $S/\randomideal$; i.e., the length of the minimal free resolution of $S/\randomideal$.

\begin{figure}[h]	
	\centering
	\includegraphics[width=\textwidth]{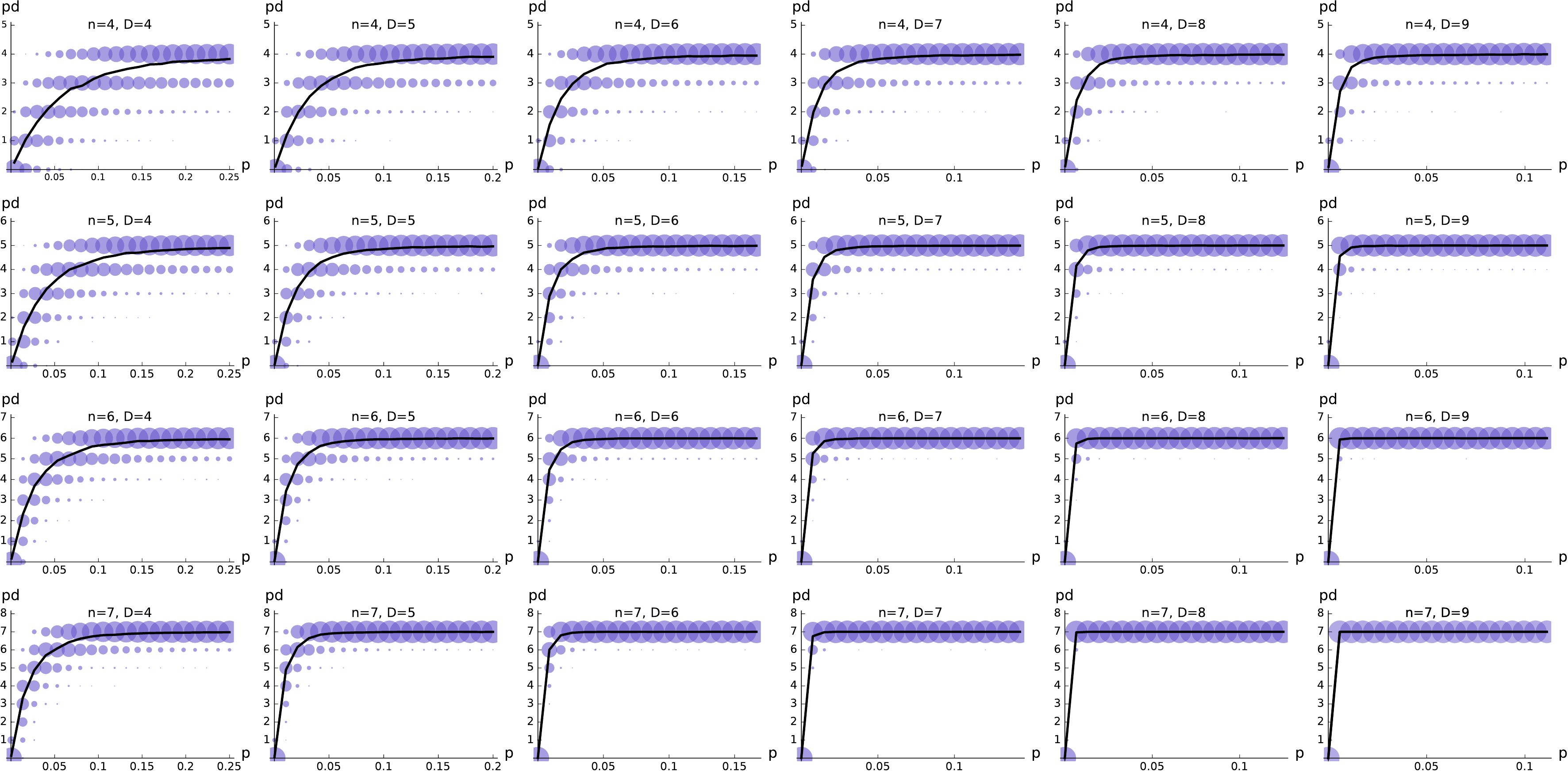}
	\caption{Projective dimension of $S/\randomideal$ for random monomial ideals $\randomideal\sim\umodel$ for $(n,D)=(4,4)$ through $(n,D)=(7,9)$. The values of the model parameter $p$ are 20 evenly spaced values between $D^{-n}$ and ${D}^{-1}$, and the sample size for each value of $p$ is $1000$. For each $p$, circle sizes indicate the proportion of quotient rings with that projective dimension, while the black curve indicates the mean projective dimension.}  
	\label{fig:projdim:grid}
\end{figure}

By the Hilbert syzygy theorem, the projective dimension is at most $n$. We see experimentally that for large $n$ and $D$, the projective dimension of $S$ modulo any non-zero random ideal tends toward this upper bound. (Since $\operatorname{pd}(S/\randomideal)=0$ if and only if $\randomideal=\ideal{0}$, the projective dimension will always concentrate at $0$ below the $p=D^{-n}$ threshold for the zero ideal.)

\begin{conj}\label{conj:pdim}
	As $n$ and $D$ increase, the projective dimension of the quotient ring of any non-zero ideal in the \modelname{} is equal to $n$ with probability approaching 1.
\end{conj}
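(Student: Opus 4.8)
The plan is to convert the statement into a depth computation via Auslander--Buchsbaum and then into a combinatorial condition on the random generating set. For a monomial ideal $\fixideal\subseteq S$ one has $\operatorname{pd}(S/\fixideal)+\operatorname{depth}(S/\fixideal)=n$, so $\operatorname{pd}(S/\randomideal)=n$ is equivalent to $\operatorname{depth}(S/\randomideal)=0$, i.e.\ to the irrelevant ideal $\mathfrak m=(x_1,\dots,x_n)$ being an associated prime of $S/\randomideal$. For monomial ideals this is equivalent to the socle of $S/\randomideal$ being nonzero, that is, to the existence of a \emph{standard} monomial $m$ (one with $m\notin\randomideal$) such that $x_i m\in\randomideal$ for every $i$. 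So the first step is this reduction, after which the conjecture becomes: conditioned on $\randomideal\neq\ideal{0}$, such a ``socle witness'' $m$ exists a.a.s.

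Next I would split at the $0$-dimensionality threshold $1/D$ of Corollary~\ref{thm:zerodim}. If $p=\littleomega{D^{-1}}$, that corollary gives that $S/\randomideal$ is a.a.s.\ $0$-dimensional; a $0$-dimensional quotient is Cohen--Macaulay, so $\operatorname{depth}(S/\randomideal)=\dim(S/\randomideal)=0$ and hence $\operatorname{pd}(S/\randomideal)=n$ a.a.s. Thus this regime already follows from results proved above. In the complementary regime --- $p$ above the zero-ideal threshold of Corollary~\ref{cor:zeroideal} but $p=\littleoh{D^{-1}}$ --- the ideal $\randomideal$ is a.a.s.\ nonzero of positive Krull dimension, and the content of the conjecture is precisely that $\randomideal$ is a.a.s.\ \emph{not} Cohen--Macaulay yet still has depth $0$, in agreement with Conjecture~\ref{conj:CM}. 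Here I would attempt a second-moment argument: fix a well-chosen family $\mathcal A$ of exponent vectors $a=(a_1,\dots,a_n)$; for $a\in\mathcal A$ let $E_a$ be the event that $m_a:=\prod_i x_i^{a_i-1}$ is standard and $x_i m_a\in\randomideal$ for all $i$; put $X=\sum_{a\in\mathcal A}\indic_{E_a}$; and try to show $\expect{X}\to\infty$ with $\var{X}=\littleoh{\expect{X}^2}$, so that $X>0$ a.a.s.\ and a socle witness exists. The family $\mathcal A$ has to balance two competing effects: ``$m_a$ standard'' pushes toward $m_a$ of small degree and small support, while ``$x_i m_a\in\randomideal$ for all $i$'' requires, for each relevant $i$, a generator of $\randomideal$ of the form $x_i\cdot(\text{a divisor of }m_a)$, which is only likely when $m_a$ has enough divisors to ``catch'' such a generator.

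I expect the variance estimate in the hard regime to be the real obstacle. By Lemma~\ref{lm:supp} with $t=1$, when $p=\littleoh{D^{-1}}$ the set $\randomgenset$ a.a.s.\ contains no monomial of support size $1$, so $\randomideal$ contains no pure power $x_i^{a}$ and the ``cheap'' socle witness built from pure powers is unavailable; one must instead produce a genuine ``staircase corner'', a standard monomial every upward neighbour of which is forced into $\randomideal$ by structured lower-degree generators. Imposing these $n$ coordinate-wise conditions simultaneously introduces strong positive correlations among the underlying generator-inclusion events (they are mediated by the common divisors of $m_a$), so it is controlling $\var{X}$, not $\expect{X}$, that is delicate, and it is unclear that a single family $\mathcal A$ tames both moments at once. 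A possible alternative is to bypass the socle and work with the minimal irreducible decomposition $\randomideal=\bigcap_j(x_i^{a_{i,j}}:i\in\sigma_j)$ directly, showing an $\mathfrak m$-primary component ($\sigma_j=[n]$) appears a.a.s.; but extracting irreducible components from the random generating set is essentially the same corner-counting problem, which is why the statement is recorded as a conjecture rather than proved.
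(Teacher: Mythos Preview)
The paper does not prove this statement: it is explicitly recorded as Conjecture~\ref{conj:pdim}, supported only by the simulations in Figure~\ref{fig:projdim:grid}. So there is no ``paper's own proof'' to compare against, and your proposal is not a proof either --- as you yourself acknowledge in the final sentence.

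That said, your reduction via Auslander--Buchsbaum to $\operatorname{depth}(S/\randomideal)=0$, and then to the existence of a socle witness, is exactly the right framing; the paper itself invokes Auslander--Buchsbaum in the paragraph following the conjecture (to argue that Conjecture~\ref{conj:pdim} implies Conjecture~\ref{conj:CM}). Your split at the $1/D$ threshold is also sound: the regime $p=\littleomega{D^{-1}}$ is indeed handled completely by Corollary~\ref{thm:zerodim}, so the genuine content of the conjecture lies in the window $D^{-n}\ll p\ll D^{-1}$. Your diagnosis of the obstacle there --- that Lemma~\ref{lm:supp} rules out pure-power generators, forcing any socle witness to be a ``staircase corner'' built from mixed-support generators, and that the resulting events are too positively correlated for a naive second-moment argument --- is accurate and is precisely why the paper leaves this open. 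What you have written is a correct outline of why the conjecture is plausible and where a proof would have to do real work, not a proof.
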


Note that Conjecture \ref{conj:pdim} implies Conjecture \ref{conj:CM}. By the Auslander-Buchsbaum formula (\cite{BrunsHerzog}, see also \cite{stanley}), $\operatorname{pd}(S/\randomideal)=n-\operatorname{depth}(S/\randomideal)$ for all $\randomideal$. Hence Conjecture \ref{conj:pdim} implies $\operatorname{depth}(S/\randomideal)\to 0$, which implies that the quotient ring of a nonzero ideal will be Cohen-Macaulay if and only if it is zero-dimensional.


\paragraph{Strong genericity.}
A monomial ideal $I\subset \kring$ is said to be \emph{strongly generic} if no two minimal generators agree on a non-zero exponent 
of the same variable. For example, $\ideal{x_1x_2,x_1^2x_3}$ is a strongly generic monomial ideal in $k[x_1,x_2,x_3]$, but 
$\ideal{x_1x_2,x_1x_3}$ is not. Strong genericity is interesting because the minimal free resolution of $S/\randomideal$ has a combinatorial 
interpretation using the  \emph{Scarf polyhedral complex} (see \cite[Chapter 6]{miller+sturmfels} for details.), when  $\randomideal$ is strongly generic. 

\begin{figure}[h]	
	\includegraphics[width=\textwidth]{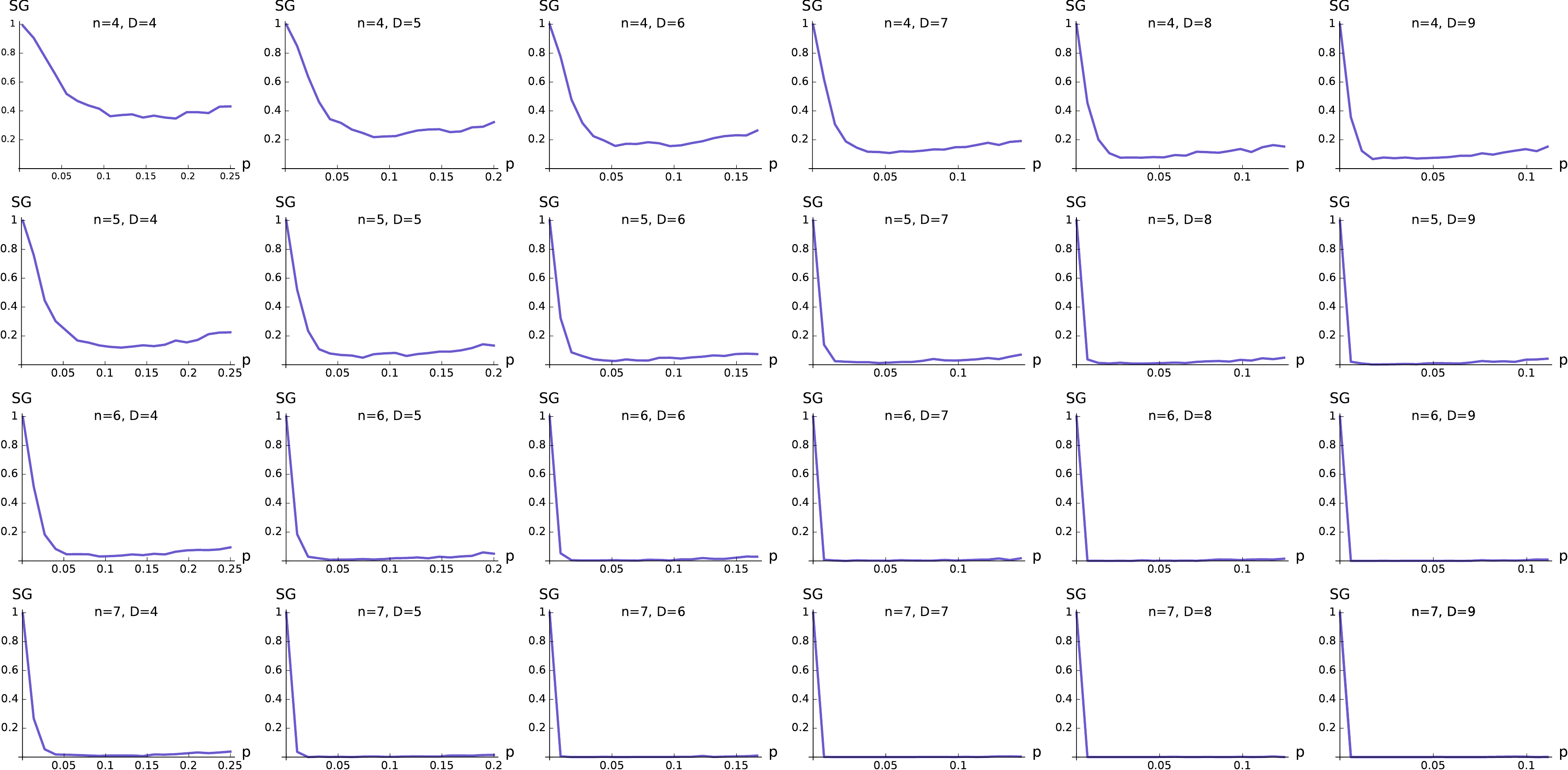}
	\caption{Frequency of strong genericity in the \modelname{}, for $(n,D)=(4,4)$ through $(n,D)=(7,9)$, as the model parameter $p$ takes values between $D^{-n}$ and ${D}^{-1}$. The sample size for each fixed value of $p$ is $1000$.}  
	\label{fig:gen:grid}
\end{figure}

The zero ideal, with no generators, is trivially strongly generic, as is any principal ideal. At first it might seem that increasing $p$, and thus increasing the expected number of monomials included in $\randomideal$, would always lower the frequency of strong genericity. However at the other extreme of $p=1-\epsilon$, the maximal ideal $\ideal{x_1,x_2,\ldots,x_n}$ occurs frequently, and this ideal is strongly generic. Between these extremal cases, we observe experimentally that very few random monomial ideals are strongly generic, and suspect a connection between this behavior and our results in Section \ref{sec:bettistuff} about the number of minimal generators.

\begin{conj} \label{conj:generic} 
	As $n$ and $D$ increase, there will be a lower threshold function $l(n,D)$ and an upper threshold function $u(n,D)$, such that the probability of being strongly generic will go to zero for $p=\littleoh{u(n,D)}$ and $p=\littleomega{l(n,D)}$, while the probability of being strongly generic will go to one for $p=\littleoh{l(n,D)}$ as well as for $p=\littleomega{u(n,D)}$.	
\end{conj}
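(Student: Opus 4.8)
The plan is to split the parameter range into three regimes and to locate $l(n,D)$ and $u(n,D)$ as the two phase transitions between them. In the \emph{bottom} regime $\randomideal$ is either the zero ideal (below the threshold of Corollary~\ref{cor:zeroideal}) or, just above it, an ideal with too few and too spread--out generators for any two to agree on a nonzero exponent; in either case it is strongly generic. In the \emph{top} regime $1-p$ is so small that all but a bounded number of the linear monomials $x_1,\dots,x_n$ already lie in $\randomgenset$, and I will show that such ideals are always strongly generic. In the \emph{middle} regime many minimal generators appear, and the task is to show that asymptotically almost surely two of them collide. Accordingly, $l(n,D)$ will be the threshold for the appearance of the first colliding pair of minimal generators, while $u(n,D)$ --- which it is natural to phrase on the scale of $1-p$ rather than of $p$ --- will be the threshold $1-u(n,D)\asymp 1/n$ for all but at most two of the linear monomials being present.

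For the bottom and middle regimes I would study the random variable $Z$ counting unordered pairs $\{x^\alpha,x^\beta\}$ of monomials of degree at most $D$ that are both minimal generators of $\randomideal$ and satisfy $\alpha_i=\beta_i\ge 1$ for some $i$; by definition, $\randomideal$ is not strongly generic if and only if $Z\ge 1$. Since a monomial $x^\gamma$ of degree at most $D$ is a minimal generator precisely when $x^\gamma\in\randomgenset$ and none of its $\prod_i(\gamma_i+1)-2$ proper nonconstant divisors lies in $\randomgenset$, linearity of expectation gives $\expect{Z}=\sum_{\{x^\alpha,x^\beta\}}p^{2}(1-p)^{d(\alpha,\beta)}$, summed over incomparable colliding pairs, with $d(\alpha,\beta)$ the number of monomials strictly dividing $x^\alpha$ or $x^\beta$; grouping by the value of $d(\alpha,\beta)$ rewrites this as $p^{2}\sum_{k}a_{k}(n,D)(1-p)^{k}$, an analogue of the divisor--function sum behind Theorem~\ref{thm:mingennvars}, with $a_k$ counting colliding pairs by their number of divisors. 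One transparent contribution is the term $k=3$: the pairs $\{x_ix_j,x_ix_k\}$ with $i,j,k$ distinct are incomparable, collide in $x_i$, and have $d=3$, so $\expect{Z}\ge c\,n^{3}p^{2}(1-p)^{3}$ for an absolute constant $c$, which already forces $\expect{Z}\to\infty$ whenever $n^3p^2(1-p)^3\to\infty$. I would define $l(n,D)$ (up to constants) as the value of $p$ at which $\expect{Z}$ crosses $1$. For $p=\littleoh{l(n,D)}$, $\expect{Z}\to 0$, so the first moment method gives strong genericity a.a.s. For $p=\littleomega{l(n,D)}$ inside the middle regime, $\expect{Z}\to\infty$, and I would apply the second moment method, establishing $\var{Z}=\littleoh{\expect{Z}^{2}}$: the dominant part of $\expect{Z}$ concentrates on a single degree scale $d=d(p,n,D)$, colliding pairs built from disjoint monomial sets are independent, and the correlated terms (pairs sharing a monomial or a forbidden divisor) are of strictly lower order. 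This variance estimate is the crux of the lower half of the argument.

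For the top regime, set $q=1-p$ and prove the combinatorial lemma: \emph{if at most two of the linear monomials $x_1,\dots,x_n$ are absent from $\randomgenset$, then $\langle\randomgenset\rangle$ is strongly generic.} Indeed the present linear forms are pairwise non--colliding; a monomial supported on the $\le 2$ absent variables cannot agree with any linear form on a nonzero exponent; and the minimal generators supported on those $\le 2$ variables form an antichain of monomials $x_i^ax_j^b$, no two of which can agree on either exponent. The number of absent linear monomials is $\mathrm{Binomial}(n,q)$, which is at most $2$ asymptotically almost surely precisely when $nq\to 0$; taking $1-u(n,D)\asymp 1/n$ then yields strong genericity a.a.s.\ ``for $p=\littleomega{u(n,D)}$'' (read on the $1-p$ scale). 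When instead $nq\to\infty$ --- which together with $p=\littleomega{l(n,D)}$ is exactly the middle regime --- at least three linear monomials are absent, $Z$ is governed by the analysis of the previous paragraph, and $\randomideal$ is not strongly generic a.a.s.

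The hard part is the lower--threshold variance estimate: one must pin down, as a function of $(p,n,D)$, the degree scale on which colliding pairs of \emph{minimal} generators are most abundant, and then control the contribution to $\var{Z}$ of the pairs away from that scale and of the pairs sharing a monomial or a forbidden divisor --- a two--parameter refinement of the analytic input behind Theorem~\ref{thm:mingennvars}, in which essentially all the difficulty resides. One must also check that $\expect{Z}$ stays large throughout the middle regime, i.e.\ that the $(1-p)$--decay in the sum does not set in before the top--regime boundary. A last, organizational point is to verify that the bottom, middle, and top regimes together exhaust every admissible growth rate of $p$; this in particular forces the upper threshold to be formulated on the $1-p$ scale as above, so that pinning down the precise statement is itself part of proving Conjecture~\ref{conj:generic}.
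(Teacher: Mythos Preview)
The statement is Conjecture~\ref{conj:generic}, which the paper presents as an \emph{open problem} in Section~\ref{sec:experiments}; it is supported only by the simulations summarized in Figure~\ref{fig:gen:grid}, and the paper offers no proof. There is therefore no argument in the paper against which to compare your proposal.

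Viewed on its own merits, your proposal is a sensible line of attack rather than a proof. The combinatorial lemma for the top regime --- that if at most two of the linear monomials are absent from $\randomgenset$ then $\langle\randomgenset\rangle$ is strongly generic --- is correct, and the first-moment argument for the bottom regime is routine once $l(n,D)$ is located. You are also right that the conjecture as literally stated is ill-posed near $p=1$ and must be reformulated on the $1-p$ scale, and that strong genericity is not monotone, so a two-threshold $1$--$0$--$1$ picture is the natural shape. The substantive gap is exactly where you say it is: the second-moment estimate $\var{Z}=\littleoh{\expect{Z}^2}$ across the entire middle regime. You neither carry this out nor indicate why the correlated terms (pairs of colliding pairs that share a monomial, or share a forbidden divisor) should be of lower order uniformly in $p$; the degree scale at which minimal generators concentrate moves with $p$, and the correlation structure changes with it. You also need to verify that $\expect{Z}\to\infty$ persists all the way up to the boundary $n(1-p)\asymp 1$, not merely somewhere in the middle; your degree-two estimate $\expect{Z}\gtrsim n^3p^2(1-p)^3$ handles $p$ bounded away from~$0$ and $n(1-p)\to\infty$, but the small-$p$ end of the middle regime requires a different dominant family of colliding pairs. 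As written, this is a reasonable research plan for an open conjecture, and would constitute genuine progress beyond what the paper contains, but it is not yet a proof.
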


Note that because the plots in Figure \ref{fig:gen:grid} display $p$ values between $D^{-n}$ and ${D}^{-1}$, the behavior as $p$ approaches 1 is not visible. 


We now turn to several properties of monomial ideals for which we experimentally observed noticeable patterns, but for which we do not have explicit conjectures to present. 

\paragraph{Castelnuovo-Mumford regularity.} 
The \emph{Castelnuovo-Mumford regularity} (or simply regularity) of an ideal is a measure of the variation and spread of the the degrees of the generators at each of the modules in a  minimal free resolution of the ideal. To compute the regularity requires computing the resolutions and the number of rows in the {\tt Macaulay2}-formatted Betti diagram. Of course, the degree complexity studied in  Section~\ref{sec:bettistuff}  is  similar but less refined measure of complexity, as it counts the number of entries in the first column of the Betti diagram only. 

\begin{figure}[h]	
	\includegraphics[width=\textwidth]{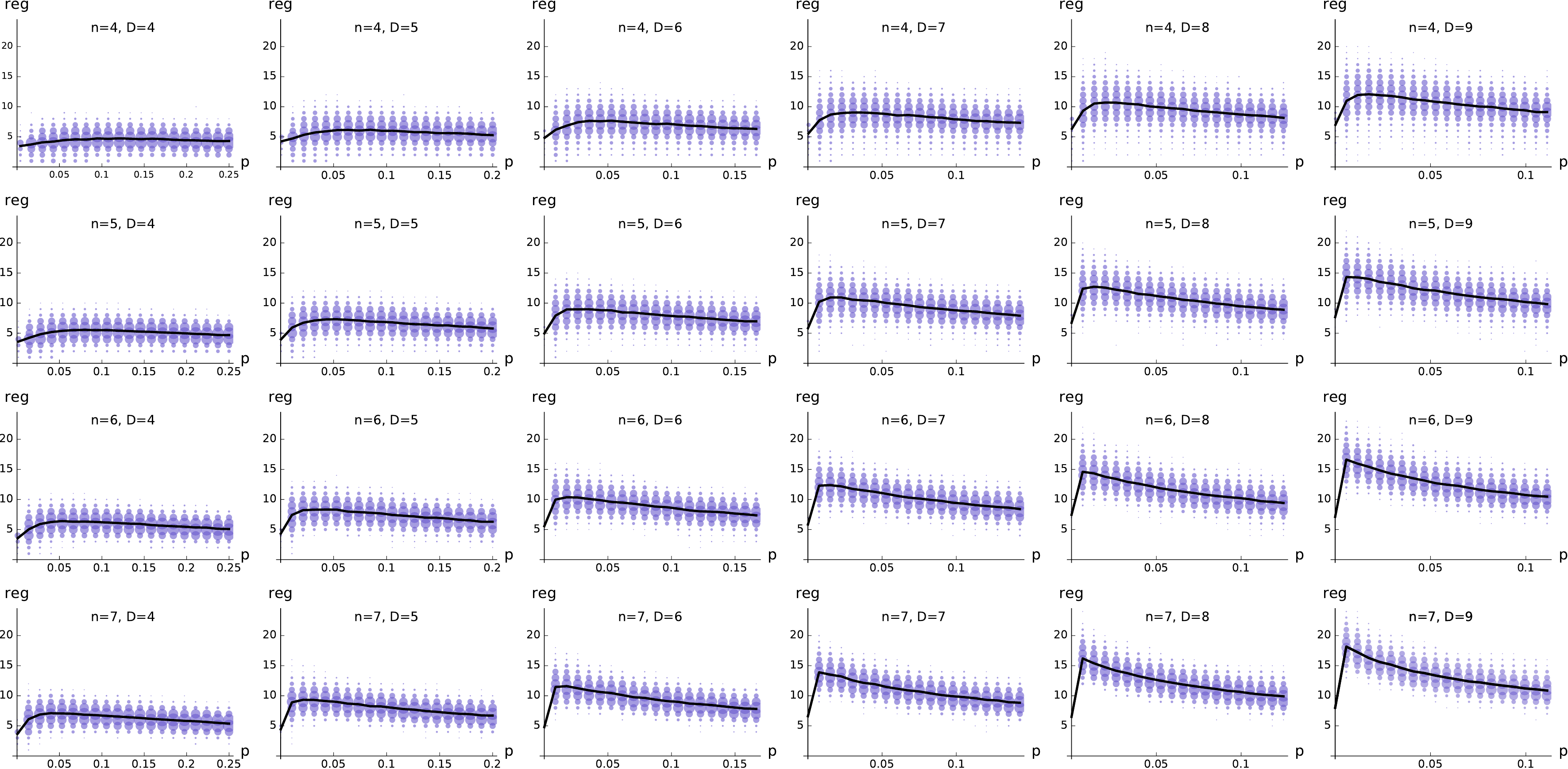}
	\caption{Castelnuovo-Mumford regularity of random monomial ideals \mbox{$\randomideal\sim\umodel$} for $(n,D)=(4,4)$ through $(n,D)=(7,9)$. Note that $n$ increases across rows and $D$ increases down columns, and that the values of the model parameter $p$ are 20 evenly spaced values between $D^{-n}$ and ${D}^{-1}$. The sample size for each fixed value of $p$ is $1000$. Circle sizes indicate the relative frequencies of occurrence of random monomial ideals of corresponding regularity, and the mean regularity for each $p$ value is indicated by the black curves.}  
	\label{fig:regularity:grid}
\end{figure}

The simulations on the range of values realized for regularity of random monomial ideals, which  do not take into account  any zero ideals generated 
(since their regularity is $-\infty$), show an interesting trend. Namely, already for the small values of $n$ and $D$ depicted in Figure~\ref{fig:regularity:grid}, there is quite a range of values obtained under the model. As $n$ grows these values concentrate more tightly around their mean, while seeming to follow a binomial distribution (discretized version of a normal distribution).

\paragraph{Simplicial homology of associated simplicial complexes.}

\begin{figure}[h]
	\centering
	\includegraphics[width=\textwidth]{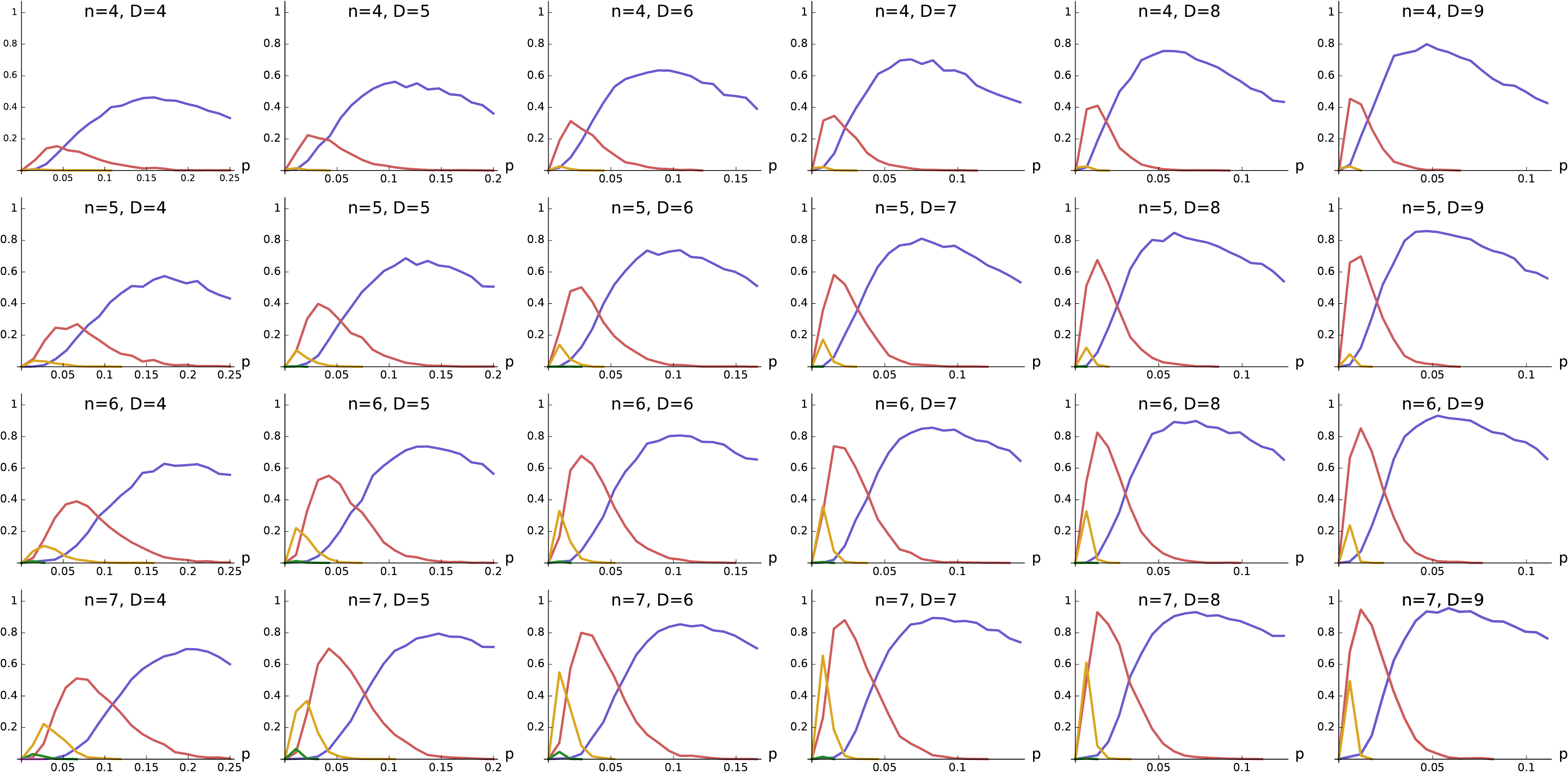}	
	\vspace*{.5em}
	\includegraphics[width=.7\textwidth]{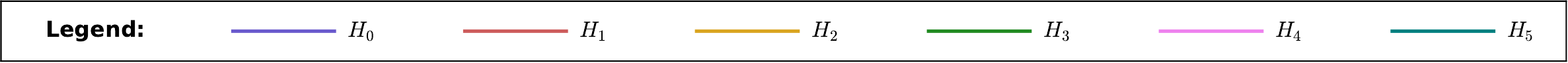}
	\caption{Non-trivial homology groups for the random simplicial complexes whose nonfaces are radicals of random monomial ideals in the \modelname{} setting. Each subplot in Figure \ref{fig:hom:radical} represents a fixed pair of $(n,D)$ values, with $n$ 
increasing down columns, and $D$ increasing across rows. For each $n,D$ and $p$ value we generated $1000$ ideals $\operatorname{rad}(\randomideal\sim\umodel)$. Each subplot registers the frequency at which the second kind of random simplicial complex  exhibited $H_i\neq \{0\}$ over ${\mathbb Z}_2$. 
The values of $p$ in each subplot are chosen in the interval $[D^{-n},D^{-1}]$. We note that $\randomideal$ tends to the maximal ideal as $p\to 1$, so every curve will tend to $0$.
}  \label{fig:hom:radical}
\end{figure}

\begin{figure}[h]
	\centering
	\includegraphics[width=\textwidth]{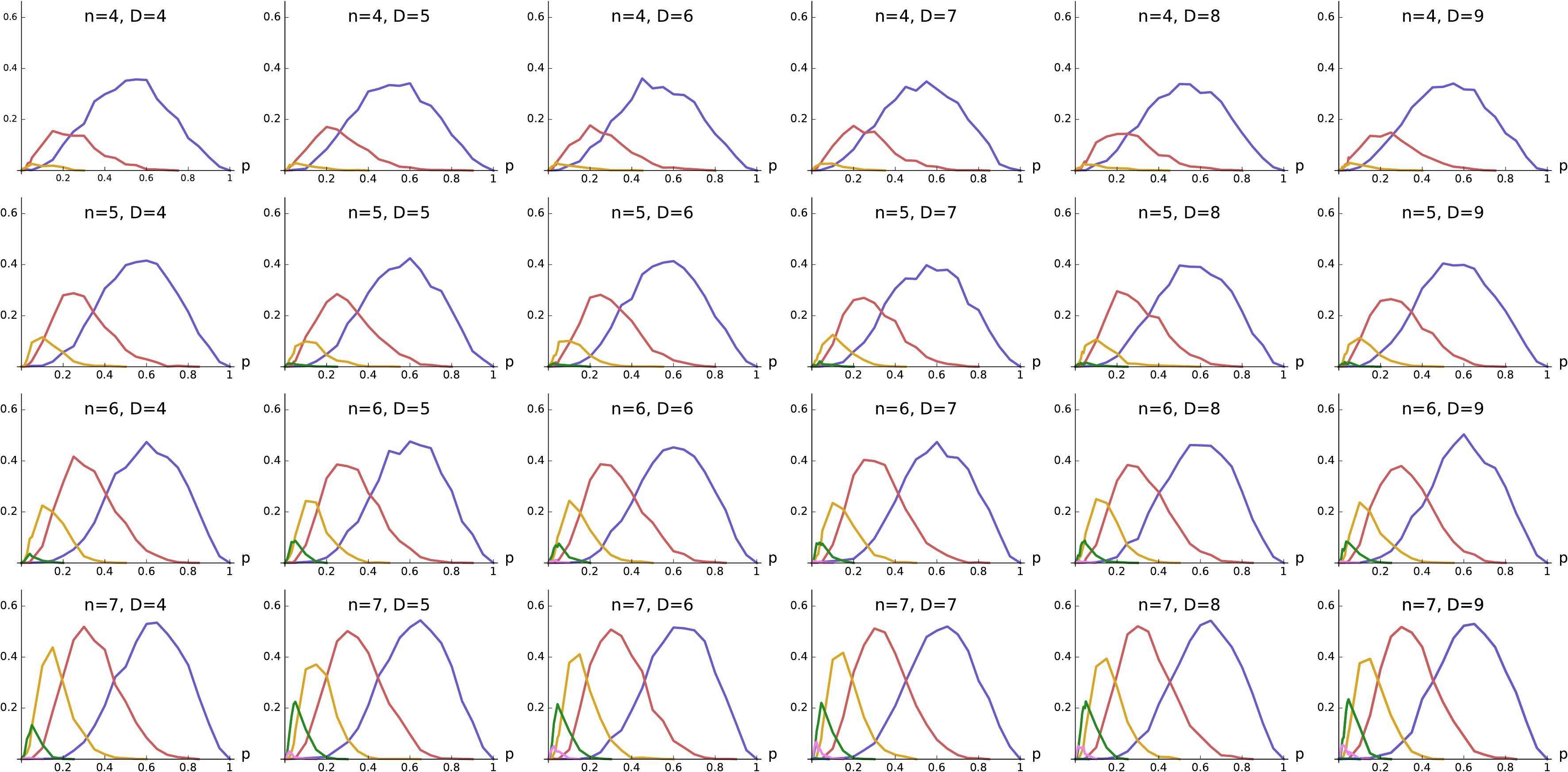}	
	\vspace*{.5em}
	\includegraphics[width=.7\textwidth]{hom_legend}
	\caption{Non-trivial homology groups for the random simplicial complexes whose non-faces are random square-free monomial ideals.
	Each subplot is for a pair $(n,D)$ values, with $n$ increasing down columns, and $D$ increasing across rows. With a sample size of 1000 monomial ideals for each $n,D$ and $p$, each subplot shows the frequency at which the second kind of random simplicial complex  exhibited $H_i\neq \{0\}$ over 
	${\mathbb Z}_2$. The $p$ values for this set of experiments are $\{0.05n\mid 0\leq n\leq 20\}$. The subplots register the frequency at which these randomly generated simplicial complexes exhibited $H_i\neq \{0\}$. 
	 }
	\label{fig:hom:squarefree}
\end{figure}

One motivation for studying random monomial ideals is that they provide a way to generate \emph{random simplicial complexes} different from earlier work, where the authors randomly generate $k$-dimensional faces of complexes for some fixed integer $k$ (see \cite{costafarber,kahlesurvey,linial+meshulam}). Instead, by Stanley-Reisner duality, the indices in a monomial we generate are the elements of a \emph{non-face} of a simplicial complex. 

There are two natural ways to randomly generate sets of square-free monomials and our experiments considered both of them.
First, as radicals of random monomial ideals drawn from the \modelname{} and, second, directly as random square-free monomial ideals drawn from the general model, as described in Theorem \ref{thm:CostaFarberCorresp}, that places zero probability on non-square-free-monomials and probability $p$ on square-free monomials. 
In our experiments, given an ideal in $n$ variables, we computed the ${\mathbb Z}_2$-homology $H_i$, for $i$ from 0 to $n-1$, of the associated simplicial complex. Figure \ref{fig:hom:radical} displays the homological properties of the random simplicial complexes obtained via the Stanley-Reisner 
correspondence from the first approach. Figure \ref{fig:hom:squarefree} displays the homological properties of the random simplicial complexes obtained via the second method, directly generating square-free ideals, avoiding taking the radical. 

The patterns of appearance and disappearance of higher homology groups, visible  in both figures, are familiar in the world of random topology; see, for example, \cite{kahle+meckes}. For each $i$ there is a lower threshold below which $H_i$ always vanishes, and an upper threshold above which $H_i$ also vanishes, while between these two thresholds we see a somewhat normal-looking curve. Since our model parameter $p$ controls how many faces are \emph{removed} from a complex, higher dimensional objects are associated with lower values of $p$: a reversal of the typical behavior in prior random topological models.  Another interesting pattern in both sets of experiments (unfortunately not evident in the plots) is that there was an overall tendency for each random simplicial complex to have no more than one non-trivial homology group. That is, if for a particular $n,D$ and $p$ the experiments found that $H_0$ was nontrivial with frequency $x$, and $H_1$ was nontrivial with frequency $y$, one might expect to see random simplicial complexes with $H_0$ and $H_1$ \emph{simultaneously} nontrivial with frequency $xy$. This would be the case if these events were statistically uncorrelated. However we consistently observed frequencies much lower than this, i.e. a negative correlation between nonzero homology at $i$ and nonzero homology at $j$, for every $i\neq j$.


\section{Acknowledgements} We are grateful for the comments, references and suggestions from Eric Babson, Boris Bukh, Seth Sullivant, Agnes Szanto, Ezra Miller, and Sayan Mukherjee. We are also grateful to REU student Arina Ushakova who did some initial  experiments for this project.
\bibliographystyle {acm} 
\bibliography{randomMonomialIdeals}
\end{document}